\renewcommand\labelenumi{(\roman{enumi})}
\renewcommand\theenumi\labelenumi
\renewcommand{\labelenumi}{(\roman{enumi})}
\newtheorem{theorem}{Theorem}[section]
\newtheorem{corollary}[theorem]{Corollary}
\newtheorem{proposition}[theorem]{Proposition}
\newtheorem{remark}[theorem]{Remark}
\newtheorem{lemma}[theorem]{Lemma}
\newtheorem{claim}[theorem]{Claim}
\newtheorem*{claim*}{Claim}
\newtheorem*{theorem*}{Theorem}
\newtheorem*{proposition*}{Proposition}
\theoremstyle{definition}
\newtheorem{definition}[theorem]{Definition}
\newcommand{\1}{\mathbbm{1}}
\newcommand{\cA}{\mathcal{A}}
\newcommand{\cB}{\mathcal{B}}
\newcommand{\cC}{\mathcal{C}}
\newcommand{\cE}{\mathcal{E}}
\newcommand{\cF}{\mathcal{F}}
\newcommand{\cG}{\mathcal{G}}
\newcommand{\cT}{\mathcal{T}}
\newcommand{\cV}{\mathcal{V}}
\newcommand{\cL}{\mathcal{L}}
\newcommand{\eps}{\varepsilon}
\newcommand{\B}{\mathcal{B}}
\newcommand{\D}{\mathbbm{d}}
\newcommand{\cq}{c_X}
\newcommand{\de}{\operatorname{d}}
\newcommand{\sT}{\mathscr{T}}
\DeclareMathOperator{\E}{\mathbb{E}}
\DeclareMathOperator{\p}{\mathbb{P}}
\DeclareMathOperator{\Var}{Var}
\renewcommand{\P}{\mathbb{P}}
\newcommand{\N}{\mathbb{N}}
\newcommand{\Z}{\mathbb{Z}}
\newcommand{\nbig}{n_{\rm big}}
\newcommand{\T}{\mathbbm{t}}
\newcommand{\s}{\mathbbm{s}}
\title{Targeted Immunisation Thresholds for the Contact Process on Power-Law Trees}
\author{John Fernley, Emmanuel Jacob}
\date{\today}
\begin{document}

\begin{center}
{\huge  Targeted immunisation thresholds for the

 contact process on power-law trees}

\vspace{1.5em}

John Fernley\footnote{Corresponding author.}\textsuperscript{,}\renewcommand*{\thefootnote}{\fnsymbol{footnote}}\setcounter{footnote}{0}\footnote{HUN-REN 
R\'enyi Alfr\'ed Matematikai Kutat\'oint\'ezet,
			Re\'altanoda utca 13-15,
            1053 Budapest,
            Hungary. {\tt fernley@renyi.hu}}\qquad
Emmanuel Jacob\footnote{\'Ecole Normale Sup\'erieure de Lyon,
            46 all\'ee d'Italie,
            69007 Lyon,
            France. {\tt emmanuel.jacob@ens-lyon.fr}}

\vspace{1.5em}

\today

\end{center}

\renewcommand*{\thefootnote}{\arabic{footnote}}
\setcounter{footnote}{1}

\begin{abstract}
Scale-free configuration models are intimately connected to power law Galton-Watson trees. It is known that contact process epidemics can propagate on these trees and therefore these networks with arbitrarily small infection rate, and this continues to be true after uniformly immunising a small positive proportion of vertices. So, we instead immunise those with largest degree: above a threshold for the maximum permitted degree, we discover the epidemic with immunisation has survival probability similar to without, by duality corresponding to comparable metastable density. With maximal degree below a threshold on the same order, this survival probability is severely reduced or zero.

{\small
\vspace{1em}
\noindent
\emph{2020 Mathematics Subject Classification:} Primary 60K35, Secondary 60K37, 05C82

\noindent
\emph{Keywords:} contact process, SIS infection, targeted removal, inoculation strategy, power-law degree distribution, epidemic phase transition

}
\end{abstract}

\section{Introduction}

It is widely accepted that super-spreaders, namely particular individuals that have a greater potential for disease transmission, play a central role in the propagation of the COVID epidemic. More generally, a virus or an infection living on an heterogeneous population can use these heterogeneities to spread efficiently, even though a typical vertex can have a relatively small infection rate or effective reproduction number. Detecting and targeting these inhomogeneities can thus play a central role in the development of an efficient immunization strategy.

We aim to illustrate and study this effect on a simple idealized model. We model the epidemic by a contact process (or SIS infection), and consider a population given by a Galton-Watson tree with power-law offspring distribution, where immunization is taken into account simply by truncating the degree distribution. 
We first introduce this model and state our main results, and then we come back to the original motivation with a further discussion on the choice of the model.

On a locally finite graph $G=(V,E)$ modelling individuals and their interactions, the contact process is a Markov process in continuous time $(\xi_t)_{t\ge 0}$ on $\{0,1\}^V$ with $\xi_t(v)=1$ (resp. 0) if the individual $x$ is infected (resp. healthy) at time $t$. The dynamics are determined by each infected vertex recovering at rate $1$ and each edge $x\sim y$ with for example $x$ infected and $y$ healthy transmitting the infection (to $y$) at fixed rate $\lambda>0$. On an infinite rooted graph $(G,o)$, we will typically start the infection from only the root infected and look at the survival probability of the infection $P_{\rm surv}(\lambda):=\p(\forall t\ge 0, \xi_t\ne \mathbf{0})$. This survival probability is monotonic in $\lambda$, so that the survival probability is positive if $\lambda$ is larger than a critical parameter $\lambda_c=\lambda_c(G)\in [0,+\infty]$ and zero if $\lambda<\lambda_c$.
Note the result \cite{liggett1978} gives $\lambda_c(\Z^d)\leq \nicefrac{2}{d}$, and we have $\lambda_c\le \lambda_c(\N)<+\infty$ on any infinite graph by comparison to oriented percolation \cite{zbMATH02246274}. 

It is easy to prove that $\lambda_c\ge \nicefrac{1}{d}>0$ on any graph with degrees bounded by $d$, however it is totally possible to have $\lambda_c=0$ on graphs with unbounded degrees. 
We exactly know when this happens 
in the case of Galton-Watson trees 
since the works of \cite{huang2018contact} and \cite{bhamidi2019survival}. Consider $G$ a Galton-Watson tree with supercritical offspring distribution $X$ (namely $\E X>1$), and conditioned to be infinite (otherwise $\lambda_c(G)=\infty$). Then $\lambda_c(G)$ is:
\begin{itemize}
	\item positive a.s. if $X$ has an exponential tail, i.e. there exists $c>0$ such that $\E e^{cX}<+\infty$.
	\item 0 a.s. otherwise.
\end{itemize}
In the latter case, for any positive $\lambda$, the quenched survival probability $P_{\rm surv}(\lambda,G):=\p(\forall t\ge 0, \xi_t\ne \mathbf{0} | G)$ is thus a.s. positive on the event that $G$ is infinite, and in particular the annealed survival probability $P_{\rm surv}(\lambda)=\E[P_{\rm surv}(\lambda,G)]$ is also positive. 

We now specify the model and consider a Galton-Watson tree $G$ which:
\begin{itemize}
	\item is still supercritical: $\E[X]>1$.
	\item follows a power-law offspring distribution, or more precisely 
	\begin{equation}\label{tail_distrib}
	\P(X\ge k)\sim \cq k^{2-\tau}
	\end{equation}
	for some $\cq>0$ and power-law exponent $\tau>2$.
\end{itemize} 
Such a distribution has no finite exponential moment, so the (annealed) survival probability $P_{\rm surv}(\lambda)$ is positive for any $\lambda>0$. However this probability decays to 0 as $\lambda\to 0$, and as an adaptation %
of Proposition 1.4 in \cite{mountford2013metastable}, we can provide the following precise asymptotics for $P_{\rm surv}(\lambda)$:

\begin{equation}\label{Untruncated_Tree_Asymptotics}
P_{\rm surv}(\lambda)=
\begin{cases}
\Theta\Big(\lambda^{\frac {\tau-2}{3-\tau}}\Big), &\quad \text{when } \tau\le 2.5,\\
\Theta\Big(\frac {\lambda^{ 2\tau-4}}{\log^{\tau-2}(\nicefrac{1}{\lambda})}\Big), &\quad \text{when } 2.5<\tau\le 3,\\
\Theta\Big(\frac{\lambda^{2\tau-4}}{\log^{2\tau-4}(\nicefrac{1}{\lambda})}\Big), &\quad \text{when } \tau> 3,
\end{cases}
\end{equation}
where $\Theta$ stands for the usual Landau notation.

The goal of this paper is to see to what extent this result is affected when we remove the full descendants of the highest degree vertices, and thus replace the offspring distribution $X$ by $X \1_{X\le \T}$ for some truncation level $\T=\T(\lambda)$. 
 Note that we immediately obtain extinction of the contact process when $\T\le \nicefrac{1}{\lambda}$, and so  we will typically be interested in the regime $\T=\T(\lambda)> \nicefrac{1}{\lambda}$.

Our main result is as follows:

\begin{theorem*}
	Consider a truncation threshold $\T=\T(\lambda)\to +\infty$ as $\lambda\to 0$. Write $P^{\T}_{\rm surv}(\lambda)$ for the survival probability of the contact process with infection parameter $\lambda>0$, starting from only the root infected and run on the  Galton-Watson tree with truncated offspring distribution $X \1_{X\le \T}$ %
	\begin{enumerate}
		\item Suppose $\tau < 2.5$. Then there exists  a critical constant $\rho_c>0$ such that whenever $\rho_1 < \rho_c < \rho_2$ we have for small $\lambda$
		\[
		P^\T_{\rm surv}(\lambda)
		=
		\begin{cases}
		0, & \T \leq \rho_1 \lambda^{-\frac{1}{3-\tau}}, \\
		\Theta\left(\lambda^\frac{\tau-2}{3-\tau}\right), & \T \geq \rho_2 \lambda^{-\frac{1}{3-\tau}}.
		\end{cases}
		\]
		\item Suppose $\tau = 2.5$. Then we have constants $\rho_1, \rho_2 > 0$ such that for small $\lambda$
		\[
		P^\T_{\rm surv}(\lambda)
		=
		\begin{cases}
		0, & \T \leq \rho_1 \lambda^{-2}, \\
		\Theta\left(\lambda\right), & \T \geq \rho_2 \lambda^{-2}.
		\end{cases}
		\]
		\item Suppose $\tau \in (2.5,3]$. Then for any $\delta>0$ we can find $\rho_1, \rho_2 > 0$ such that for small $\lambda$
		\[
		P^\T_{\rm surv}(\lambda)
		=
		\begin{cases}
		O\left(
		e^{-\lambda^{-2+\delta}}
		\right), & \T \leq \rho_1 \lambda^{-2} \log \frac{1}{\lambda}, \\
		\Theta\left(\frac{\lambda^{2\tau-4}}{\log^{\tau-2}\left(\nicefrac{1}{\lambda}\right)}\right), & \T \geq \rho_2 \lambda^{-2} \log \frac{1}{\lambda}.
		\end{cases}
		\]
		\item Suppose $\tau>3$. Then for any $C>0$ we can find $\rho_1, \rho_2 >0$ such that for small $\lambda$
		\[
		P^\T_{\rm surv}(\lambda)
		=
		\begin{cases}
		O\left(
		\lambda^C
		\right), & \T \leq \rho_1 \lambda^{-2} \log^2 \frac{1}{\lambda}, \\
		\Theta\left(\frac{\lambda^{2\tau-4}}{\log^{2\tau-4}\left(\nicefrac{1}{\lambda}\right)}\right), & \T \geq \rho_2 \lambda^{-2} \log^2 \frac{1}{\lambda}.
		\end{cases}
		\]
	\end{enumerate}  
\end{theorem*}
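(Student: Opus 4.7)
The plan is to prove each case by separately establishing the lower bound on $P^\T_{\rm surv}(\lambda)$ in the high-truncation regime and the upper bound in the low-truncation regime. These two directions require quite different techniques, and in each regime the $\tau$-dependent threshold $\T^*(\lambda)$ (equal to $\lambda^{-1/(3-\tau)}$, $\lambda^{-2}$, $\lambda^{-2}\log(1/\lambda)$, or $\lambda^{-2}\log^2(1/\lambda)$ in the four cases) should be readable directly off the proof of \eqref{Untruncated_Tree_Asymptotics} from \cite{mountford2013metastable}: it is exactly the typical maximal hub degree used in their survival strategy.

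For the lower bounds in the cases $\T\ge\rho_2\T^*(\lambda)$, the plan is to reuse the survival mechanism behind \eqref{Untruncated_Tree_Asymptotics}. The MVY-style strategy builds a backbone of ``useful'' hubs (vertices of sufficiently large degree) connecting the root to infinity, on each of which the star-contact-process survives long enough to seed the next hub. I would carefully inspect this construction to identify the distribution of hub degrees that actually enter the argument, verify that it is supported (up to an event of overwhelming probability) on $[1,C\T^*(\lambda)]$, and then choose $\rho_2\ge C$. The truncation therefore preserves all hubs relevant to the strategy and the untruncated asymptotics carry over verbatim. The upper bound $P^\T_{\rm surv}\le P_{\rm surv}$ follows from monotonicity in the graph.

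For the upper bounds in the cases $\T\le\rho_1\T^*(\lambda)$, the approach is a first-moment / multitype-branching argument. I would dominate the contact process on the truncated Galton--Watson tree by a branching process whose types encode the degree $d\le\T$ of the infected vertex and whose offspring counts are governed by the survival time $\exp(c\lambda^2 d)$ of the star contact process (with the correct sub-exponential behaviour when $\lambda^2 d$ is small). The transmission across a parent--child edge produces an expected number of ``successful infections'' proportional to $\lambda\,d\,\exp(c\lambda^2 d)$, size-biased by the offspring distribution. I would then show that the principal eigenvalue of the mean offspring operator is strictly less than $1$ when $\T\le\rho_1\T^*(\lambda)$, yielding extinction or the stated super-polynomial decay.

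The main obstacle is the second direction in the $\tau>2.5$ regimes, where the logarithmic factors enter. Here useful hubs are sparse near the root, so the effective reproduction rate of the branching process must be computed as a product of the per-hub star-survival time with a Galton--Watson multiplication factor over several tree generations. The two log exponents, $\tau-2$ and $2\tau-4$, reflect whether the forward degree distribution has infinite or finite mean, and capturing them sharply requires tracking the joint contribution of hub degrees and tree depth. This amounts to estimating a truncated size-biased power-law expectation $\E[X\mathbf{1}_{X\le\T}\exp(c\lambda^2 X)]$ (and iterates thereof) and showing it crosses the threshold $1$ precisely at $\T\asymp\T^*(\lambda)$; matching the constants $\rho_1<\rho_c<\rho_2$ in case (i) will demand that both the survival-strategy hub distribution and the branching eigenvalue computation concentrate at the same constant $\rho_c$.
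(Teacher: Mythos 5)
Your high-level plan for the lower bounds in the $\tau>2.5$ regimes has a genuine gap. You propose to inspect the \cite{mountford2013metastable} survival strategy, verify the hub degrees used are supported on $[1,C\T^*(\lambda)]$, and conclude the argument ``carries over verbatim'' after truncation. But their strategy in these regimes rests on finding a single infinite path whose degrees \emph{increase to infinity}, and that path simply does not exist in the truncated tree, whatever the constant $\rho_2$. The paper must therefore replace that construction by a qualitatively different one: a supercritical branching process of ``big'' vertices (degree of order $\T$ but \emph{bounded}), where local survival at each big vertex for a controlled time $e^{\Theta(\lambda^2\T)}$ is used to reach its big offspring. For $\tau\in(2.5,3]$ big children are direct offspring; for $\tau>3$ they are at distance $\Theta(\log(1/\lambda))$ and one needs the star-to-star communication lemma. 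None of this is a verification step; it is a new argument, and the second-moment method is then applied to this new branching process.

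Your plan for the upper bounds also underestimates the difficulty in two respects. First, there is no off-the-shelf domination of the contact process on a tree by a multitype branching process whose per-edge transmission is $\lambda d\,e^{c\lambda^2 d}$; the feedback of infection between a vertex and its many neighbours is precisely what makes the contact process hard. The paper's substitute is the CP-BRW: branching is introduced only at a carefully chosen set of ``sinks'' (the high-degree vertices), and one proves supermartingale bounds on the total infectious period of a big vertex and on inter-sink transmission means to assemble the estimate~\eqref{equation_branching_sum}. Second, even when the resulting mean operator is subcritical, rare environments (e.g.\ several maximal-degree vertices adjacent to the root) prevent a clean extinction conclusion: for $\tau>2.5$ the theorem only asserts a small survival probability, not extinction, and the proof splits on a ``good environment'' event $\cE$, bounding $\P(\cE^c)$ and the survival probability on $\cE$ separately. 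Your ``principal eigenvalue $<1\Rightarrow$ extinction'' route would overshoot what is true (and what the paper actually proves), and does not engage with the environment-conditioning that is the heart of the hard direction.
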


\begin{remark}
	These are asymptotic results as $\lambda\to 0$ and the truncation threshold has $\T=\T(\lambda) \to +\infty$. In particular, the truncated degree distribution $ X  \1_{X\le \T}$ %
	will have first moment larger than 1 and the truncated Galton-Watson tree will be supercritical, as is necessary for the contact process to have any chance to survive.
	More generally and even when not explicitly stated, we will generally work on the assumption that $\lambda>0$ is sufficiently small.
\end{remark}

\begin{remark}
	In the case $\tau<2.5$, if we also choose $\T=\rho \lambda^{-\frac 1 {3-\tau}}$, we thus obtain a phase transition in $\rho$ where:
	\begin{itemize}
		\item For $\rho$ smaller than the critical value $\rho_c$, the contact process gets extinct almost surely
		\item For $\rho$ larger than the critical value $\rho_c$, the truncation does not have any appreciable effect on the survival probability, which is at most changed by a multiplicative constant when $\lambda\to 0$ as compared to $P_{\rm surv}(\lambda)$.
	\end{itemize}
\end{remark}

\begin{remark}
	In the cases where $\tau \geq 2.5$, we believe a similar phase transition occurs, from almost sure extinction of the contact process, to a survival probability which is essentially the same as without truncation, and in line with~\eqref{Untruncated_Tree_Asymptotics}. 
	However, our results are not as complete in those cases. First, we do not reveal a single critical value for $\rho$, but rather have to suppose $\rho$ large to get our large $\T$ estimates, and $\rho$ small to obtain our small $\T$ upper bounds. Second, if $\tau > 2.5$, we cannot even prove extinction of the process for small $\rho$, and instead we only get that the survival probability is significantly decreased. This is due to the difficulty of proving extinction of the contact process under some very unlikely but still possible events, like the truncated tree having many vertices with maximal degree adjacent to the root. Using our current techniques, we still can prove almost sure extinction of the infection, but only if we truncate at a much smaller degree $\T=O(\lambda^{-2})$.
\end{remark}

\begin{remark}
	In the $\tau<2.5$ regime we are talking about trees of maximal degree $o(1/\lambda^2)$, and so the seminal result of \cite{pemantle1992contact} tells us that this survival is only weak, i.e. the infection will recover permanently in any finite neighbourhood. In this case, then, we expect another transition to strong (local) survival at larger orders of $\T$, but we will not look at this second transition in this article.
\end{remark}

\begin{remark}
	In the physics literature, these questions have been considered by \cite{pastor2002epidemic,pastor2002immunization,deszo2003halting} using mean-field heuristics. However, as previously observed by \cite{chatterjee2009contact} for the problem without immunisation, these heuristics struggle to accurately account for the effect of vertices of large degree. Targeted immunisation of the large degree vertices is therefore where they are most difficult to justify.
	
This has led to estimates \cite[Equation 2]{deszo2003halting} for general networks with $2<\tau\leq 3$ or \cite[Equation 28]{pastor2002immunization} for a preferential atttachment network with $\tau=3$, both of the form
\[
\T=e^{\Theta \left( \nicefrac{1}{\lambda} \right)}
\]
and so starkly disagreeing with the polynomial order $\T$ of our main theorem.
\end{remark}

These results can be of interest for their own sake, but we now come back to the original motivation of modelling an epidemic on a scale-free network after targeted immunization.

\subsubsection*{Truncation and immunization.} 

Consider immunization instead of truncation, and write $\widetilde P^{\T}_{\rm surv}(\lambda)$ for the survival probability of the contact process with infection parameter $\lambda>0$, starting from only the root infected, on the Galton-Watson tree with offspring distribution $X$%
, but where every vertex with offspring\footnote{We could also immunize every vertex with \emph{degree} larger than $\T+1$, instead of every vertex with offspring larger than $\T$. This would make a difference only for the root (if it has degree exactly $\T+1$), but would not change our results.} larger than $\T$ is \emph{immunized}. By definition the immunized vertices cannot get infected or transmit the infection, so these vertices and their offspring can just be removed from the tree. Note this is different from truncation, where only the offspring of the truncated vertices were removed. In principle the contact process could have a different behaviour on the truncated tree, as a truncated vertex can still become infected and reinfect its parent. However, truncation or immunization make no difference when looking at our results.

\begin{claim}
	Our main theorem holds unchanged if we consider immunization instead of truncation, namely with  $P^{\T}_{\rm surv}(\lambda)$ replaced by  $\widetilde P^{\T}_{\rm surv}(\lambda)$.
\end{claim}

To see this, on the one hand we can see that the tree obtained after removal of the immunized vertices is contained in the truncated tree, and thus cannot have  a larger survival probability. On the other hand, our proofs of survival never use the truncated vertices, and thus hold unchanged in the immunization settings. 

\begin{remark}
Let $X$ follow the common offspring distribution. As discussed previously, the truncated tree is simply a Galton-Watson tree with offspring distribution $X \1_{X\le \T}$. The tree obtained after removal of the immunized vertices is also a Galton-Watson tree, but with a more complicated offspring distribution, which is the main reason for us to consider truncation afterwards. Indeed, let $Y$ have the law of $X$ conditionally on $X \leq \T$.
Then each vertex has probability $p=\p(X\le \T)$ to be a small vertex (with offspring $\le \T$), and conditionally on being a small vertex, has offspring distribution $Y$, each child being again a small vertex with probability $p$. Hence, if we condition on the root to be a small vertex, then the tree obtained after removal of the big vertices is a Galton-Watson tree with mixed binomial offspring distribution 
$
\operatorname{Bin}\left(Y,p\right).
$
\end{remark}

\subsubsection*{Galton-Watson tree and scale-free networks}

A population where an infection can spread is naturally modelled by a large but finite scale-free network. 
In \cite{mountford2013metastable}, Mountford et al. actually study the contact process on such a finite scale-free network, namely the configuration model with power-law offspring distribution $p$ which further satisfies $p(\{0,1,2\})=0$.
Whatever the infection rate $\lambda>0$, these graphs have sufficiently many vertices with high degrees to maintain the infection in a metastable state for an exponentially long time (in the number of vertices of the graph). 
Their main results are then sharp estimates for the associated metastable density, when $\lambda$ tends to 0. 

By duality, this metastable density is also the probability that the infection survives for a long (but subexponential in the graph size) time when started from a single uniformly chosen infected vertex. This in turn converges to the survival probability of the contact process on the local limit of these graphs, started with only the root infected. This local limit of the configuration model is a well-known two-stage Galton-Watson tree, where the root has degree distribution $p$ but every other vertex has degree distribution given by the size-biased version of $p$, and thus offspring distribution $q$ given by $q(k)=(k+1) p(k+1)/\sum i p(i)$. Most of their work focuses on estimating the survival probability on this two-stage Galton-Watson tree. The obtained estimates on this two-stage Galton-Watson tree in turn transfer easily to the estimates~\eqref{Untruncated_Tree_Asymptotics} on the one-stage Galton-Watson tree with offspring distribution $q$.

We certainly could have taken the same route, and studied targeted immunization on the scale-free configuration model. We expect our results to generalize to these settings, using a combination of our techniques with theirs. 

Note that if we isolate every immunized vertex, we will of course obtain a disconnected graph, and thus the survival probability on the local limit of the whole graph will typically describe the metastable ``density'' of infected vertices in the whole graph and not in a given (even giant) component. This effect didn't have to be taken into account in~\cite{mountford2013metastable} as the assumption $p(\{0,1,2\})=0$ made the graph already connected with high probability.

\subsubsection*{Organisation of the paper}

In Section~\ref{sec:overview} we present heuristics that help understand our results and in particular why the truncation threshold varies for different values of $\tau$. We also introduce some important technical tools used in our proofs, in particular a process that can interpolate between the contact process and a branching random walk upper bound. 
In Section~\ref{sec_general_upper} we establish an upper bound for the survival probability of the infection applying to all $\tau>2$ and any truncation level. Then, Sections~\ref{sec:small_tau},~\ref{sec:moderate_tau} and~\ref{sec:large_tau} correspond respectively to the phases $\tau\le 2.5$, $\tau\in (2.5,3]$ and $\tau>3$. Each section is then divided into two parts: the first one deals with large truncation threshold and provides a matching lower bound for the survival probability, thus showing the immunization is essentially inefficient. Then the second part of each section deals with small truncation threshold and proves either almost sure extinction, or a much smaller upper bound for the survival probability, thus showing the efficiency of immunization. 
These second parts of each section are both the most technical part of our work, and in our opinion the most interesting ones.

Finally, in Appendix \ref{appendix} we examine the contact process' survival on stars which will be important in understanding the effect of vertices with degree close to the immunisation threshold. For this often-considered problem we show an improvement to \cite[Lemma~3.4]{nguyen2022subcritical} by removing a logarithmic factor.

\section{Overview}\label{sec:overview}

In this section we present the heuristics for lower bounding the infection, which are close to the later lower bound proofs and give some intuition for the infection behaviour in each $\tau$ case, as well as our main proof techniques for upper bounding the infection.

\subsection{The SIR Galton-Watson tree}\label{sec:sir_subtree}

The SIR process is simply the contact process with permanent recoveries, which is a stochastic lower bound \cite[Section I.1]{liggett1999stochastic} . On a tree with root $o$ and initial infection $\xi_0=\mathbbm{1}_o$, this simplifies the analysis in that infection can only move away from the root. In fact, simply give each vertex an $\operatorname{Exp}(1)$ recovery time and each vertex but the root an $\operatorname{Exp}(\lambda)$ infection time from its parent, and then remove every edge for which the infection time of the child is larger than the recovery time of the parent. 
The connected component of $o$ is then the subtree of vertices which receive the infection.

Hence, putting the above construction in a Galton-Watson tree produces a \emph{Galton-Watson subtree} which is the easiest lower bound for our problem.

\begin{definition}
Draw $D$ from some offspring distribution, and $I \sim \operatorname{Exp}(1)$. Then the offspring distribution of the SIR Galton-Watson tree is $Y$ with distribution
\[
Y \vert D,  I \sim \operatorname{Bin}\left(
D,
1-e^{-\lambda I}
\right).
\]
\end{definition}

\subsection{Local survival by reinfection from your neighbours, and survival strategies}

On the star graph containing a central vertex and $k=k(\lambda)$ leaves, with initially only the center infected,
the center will typically not be permanently recovered before time
\begin{equation}
T(\lambda, k)= e^{\Theta(\lambda^2 k)}.
\end{equation}
That is to say, they will typically see the infection at some time later than such a $T$.

This simple result dates back to \cite[Theorem 4.1]{pemantle1992contact} and has been rewritten or detailed several times since, see e.g. \cite[Lemma 5.2]{berger2005starsurvival} or \cite[Lemma 3.4]{nguyen2022subcritical}. See also~\cite[Lemma 4]{periodic2020} for a similar upper bound for the expected extinction time of the epidemics. We will use a version of this result stated for the total time the center is infected, see Lemma~\ref{lem:star_survival_center}, which is actually a refinement of~\cite[Lemma 3.4]{nguyen2022subcritical} of independent interest.

On a larger graph, $T(\lambda, \deg(x))$ can still be interpreted heuristically as the time that the infection can survive ``locally'' at a vertex $x$, using only direct reinfections from the neighbours of $x$. On the truncated tree, the maximal such value is of course $T(\lambda, \T+1)$,
which differs a lot in the different regimes considered in our main theorem. Writing $D$ for the offspring distribution of the tree environment:
\begin{enumerate}
	\item Case $\tau\le 2.5$: 
	Consider $\T=\rho \lambda^{-\frac 1 {3-\tau}}$ for some constant $\rho>0$. In the case $\tau<2.5$ we have $T(\lambda,\T+1)\to1$ as $\lambda\to 0$, which is an indication that the reinfections barely have any effect on the spread of the epidemics. We thus expect to survive as in an SIR process, namely if and only if we have $\lambda \E[D]>1$. When $\tau=2.5$ we now have $T(\lambda,\T+1)=e^{\Theta(\rho)}$ of constant order. We still expect survival by the SIR lower bound if $\lambda \E[D]>1$, while we expect extinction if we have both $\lambda\E[D]<1$ and $\rho$ sufficiently small.
	\item Case $2.5<\tau\le 3$. Consider $\T=\rho \lambda^{-2} \log\nicefrac{1}{\lambda}$. We now have $T(\lambda,\T+1)=\lambda^{-\Theta(\rho)}$. When $\rho$ is large, this time is larger than $\lambda^{-1}$ and thus sufficient to typically infect any given neighbour. We then ensure survival of the infection by observing that in that case the high-degree vertices (with degree of order $\T$) always form a supercritical Galton-Watson process on which the infection can spread.
	
	On the contrary, when $\rho$ is small, we infect any given neighbour with probability only $\lambda^{1-\Theta(\rho)}$, which is not much larger than $\lambda$. In this phase we always have $\lambda \E[D]\to 0$ as well as $\lambda^{1-\Theta(\rho)}\E[D]\to 0$ when $\rho$ is small. Therefore the strategy of maintaining the infection at each vertex $x$ for time $T(\lambda, \deg(x))$ before passing the infection to its children will not succeed, which is an indication that the contact process might die out.
	\item Case $\tau>3$. Consider $\T=\rho \lambda^{-2} \log^2 \frac 1 \lambda$. In that case we have \[
	T(\lambda,\T+1)=\lambda^{-\Theta(\rho)\log(\nicefrac{1}{\lambda})},\] a time which is sufficient to infect not only the direct neighbours, but also all nearby vertices up to distance $\Theta(\rho \log(\nicefrac{1}{\lambda}))$. It is thus natural to consider that two high-degree vertices are ``connected'' if they are separated by less than $\Theta(\rho \log(\nicefrac{1}{\lambda}))$ small-degree vertices, hence obtaining a new ``Galton-Watson tree of high degree vertices''.
	
	As $\tau>3$ the high-degree vertices are more rare in the original tree, and actually typically at distance $\Theta(\log(\nicefrac{1}{\lambda}))$ apart. Thus depending on the value of $\rho$, the new ``Galton-Watson tree of high degree vertices'' will either be supercritical or subcritical.
\end{enumerate}

Actually, it is not too difficult to turn these heuristics into proper proofs of the lower bounds on the survival probability for large truncation thresholds. It is however considerably more difficult to obtain the upper bounds for small truncation thresholds. We briefly describe below some of the techniques used in the proofs.

\subsection{Branching random walk and supermartingale}\label{sec:brw}

It is classical to interpret the contact process as a particle system, and say there is a particle at site $x$ if the site/individual $x$ is infected. In this interpretation, particles are killed at rate $1$ (by recovery of the individual), and give birth with rate $\lambda$ to a new particle at each neighbouring site... with the constraint that the number of particles at each site is bounded by 1 (so giving birth to a particle at an already occupied site has actually no effect). If we remove this constraint and allow multiple particles to coexist at each site, we then obtain a simple branching random walk upper bounding the contact process. This point-of-view is for example explored in detail in \cite[Part I.4]{liggett1999stochastic}. The advantage of this upper bound is of course that the branching random walk is often easier to study as the particles now evolve independently. In particular,
\begin{itemize}
	\item On a network with degrees strictly less than $\frac{1}{\lambda}$, the total number of infected particles is a subcritical branching process, whose expectation decays exponentially. On the contrary, if the degrees are strictly larger than $\frac{1}{\lambda}$, then the branching random walk is supercritical and has a positive probability of surviving forever (possibly only weakly/globally).
	\item On the star graph $S_k$ with one central vertex and $k$ leaves, the branching random walk is subcritical if $k< 1/\lambda^2$ but survives (with positive probability) if $k>1/\lambda^2$. 
	\item On a tree with degrees strictly less than $1+\frac{1}{4\lambda^2}$, the branching random walk upper bound does not survive locally \cite[Theorem 4.8(d)]{liggett1999stochastic}, and thus there is no local survival of the contact process (that is, after some finite time the root will stay healthy forever). On the contrary, on a tree with all degrees strictly larger than  $1+\frac{1}{4\lambda^2}$, there is strong/local survival.
\end{itemize}
Another relatively standard technique lies in defining supermartingales associated to the contact process, for example as in Lemma 5.1(i) of \cite{mountford2013metastable}, and in particular the process
\[
M^{y}_t:=\sum \xi_t(x) (2\lambda)^{d(x,y)},
\]
for any choice of a particular vertex $y$, on a tree with vertex degrees bounded by $1/8\lambda^2$. The two techniques are however closely related: on a tree with degrees bounded by $1/8\lambda^2$, an easy way to prove that the branching random walk mentioned above will not survive locally is just to associate a supermartingale similar to $M^y$, but with $\xi_t(x)$ replaced by the number of particles of the branching random walk located in $x$.

One interpretation of the fact that $M^y$ is a supermartingale is that starting from a given infected vertex $x$, you only have probability $2\lambda$ to succeed in infecting each vertex on the path from $x$ to $y$, which we could relate to the statement $T(\lambda, \nicefrac{1}{8\lambda^2})\le 2$. 
In the regime $\tau<2.5$ where $T(\lambda,\T+1)\to 1$, we will require a straightforward generalization of this supermartingale. 
\begin{lemma}\label{lemma_hostile_environment_set}
	On a tree $\cT$ with degrees bounded by $\nicefrac{\delta}{\lambda^2}$ with $\delta\le \nicefrac{1}{4}$, the processes
	\[
	M^{y}_t:=\sum \xi_t(x) \left(\frac {2\lambda}{1+\sqrt{1-4\delta}}\right)^{d(x,y)}
	\]
	are supermartingales. As a consequence, for any $x,y \in \cT$,
	\[
	\mathbb{P}\left(
	(x,0)\leftrightarrow\{y\} \times \mathbb{R}_+
	\right)
	\leq
	\left( 
	\frac{2\lambda}{1+\sqrt{1-4\delta}}	
	\right)^{\de(x,y)}.
	\]
\end{lemma}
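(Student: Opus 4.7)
The plan is to verify that $M^y$ is a supermartingale by a direct generator calculation, and then deduce the probability bound by optional stopping. The key algebraic input is hidden in the specific choice of $\mu := 2\lambda/(1+\sqrt{1-4\delta})$: multiplying by the conjugate $1-\sqrt{1-4\delta}$ shows $\mu = \lambda(1-\sqrt{1-4\delta})/(2\delta)$, so that $\mu$ is precisely the smaller root of the quadratic $\delta z^2 - \lambda z + \lambda^2$. I expect this identification of the correct decay rate to be the whole heart of the argument.

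With this identity in hand, I would apply the contact process generator to $M^y(\xi) = \sum_x \xi(x)\,\mu^{d(x,y)}$. Each recovery of an infected $x$ contributes $-\mu^{d(x,y)}$ at rate $1$, and each infection from $x$ to a healthy neighbour $z$ contributes $\mu^{d(z,y)}$ at rate $\lambda$; bounding the indicator $1-\xi(z)\le 1$ (the worst case that all neighbours of $x$ are healthy) gives a per-vertex drift of at most
\[
-\mu^{d(x,y)} + \lambda \sum_{z\sim x} \mu^{d(z,y)}.
\]
For $x\neq y$, rooting the tree at $y$ identifies one neighbour of $x$ as the parent at distance $d(x,y)-1$, with the remaining $\deg(x)-1$ neighbours at distance $d(x,y)+1$. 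Using the hypothesis $\deg(x)-1\le\delta/\lambda^2$, this drift is bounded above by $(\mu^{d(x,y)-1}/\lambda)\bigl(\delta\mu^2 - \lambda\mu + \lambda^2\bigr)$, which vanishes by the choice of $\mu$. For $x=y$, using $\deg(y)\le\delta/\lambda^2$ directly gives $-1+\lambda\deg(y)\mu \le -1 + 2\delta/(1+\sqrt{1-4\delta}) = -1 + (1-\sqrt{1-4\delta})/2 \le -1/2$. Summing over infected vertices yields $\mathcal{L}M^y\le 0$, and $M^y_t$ is a supermartingale (integrability at each finite $t$ follows from the standard branching random walk domination of the contact process).

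For the probability bound, I would apply optional stopping at the hitting time $\tau := \inf\{t\ge 0 : \xi_t(y)=1\}$. Starting from only $x$ infected gives $M^y_0 = \mu^{d(x,y)}$, hence $\mathbb{E}[M^y_{\tau\wedge t}]\le \mu^{d(x,y)}$. On the event $\{\tau\le t\}$, the vertex $y$ is infected at time $\tau$ and contributes $\mu^0 = 1$ to $M^y_\tau$, so $M^y_{\tau\wedge t}\ge \mathbbm{1}_{\tau\le t}$; sending $t\to\infty$ gives the claimed inequality since $(x,0)\leftrightarrow\{y\}\times\mathbb{R}_+$ is exactly $\{\tau<\infty\}$. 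The only genuinely delicate step is the initial algebraic choice of $\mu$ — once it is in place, both the drift computation and the optional stopping are routine.
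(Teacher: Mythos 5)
Your proof is correct and is precisely the argument the paper has in mind: the paper states the lemma without a written proof, calling it a ``straightforward generalization'' of the supermartingale in Lemma~5.1(i) of Mountford et al., and your generator computation (with the key observation that $\mu=2\lambda/(1+\sqrt{1-4\delta})$ is the smaller root of $\delta z^2-\lambda z+\lambda^2$) together with optional stopping is exactly that generalization, spelled out. The algebra all checks out, including the $x=y$ drift bound $-1+\delta\mu/\lambda=-1+(1-\sqrt{1-4\delta})/2\le -1/2\le 0$, and the integrability remark via branching random walk domination is standard and sufficient.
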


\begin{proof}
Each infected vertex $x$ recovers at rate $1$, and so decreases $M_t^y$ at rate equal to its weight \[\left(\frac {2\lambda}{1+\sqrt{1-4\delta}}\right)^{d(x,y)}.\] Relative to this weight and by infection, it increases $M_t^y$ at rate bounded by
\[
\lambda \cdot \left( \frac{\delta}{\lambda^2} -1 \right) \cdot \frac{2\lambda}{1+\sqrt{1-4\delta}}
<
\frac{2\delta}{1+\sqrt{1-4\delta}}
\]
in its $\left( \nicefrac{\delta}{\lambda^2} -1 \right)$ children, and at rate bounded by
\[
\lambda \cdot \frac{1+\sqrt{1-4\delta}}{2\lambda}
=
\frac{1+\sqrt{1-4\delta}}{2}
\]
in its parent. Because
\[
\frac{2\delta}{1+\sqrt{1-4\delta}}+\frac{1+\sqrt{1-4\delta}}{2}=1
\]
this shows the supermartingale property.
\end{proof}

Note that $2/(1+\sqrt{1-4\delta})$ tends to 1 as $\delta\to 0$, which is in line with the statement $T(\lambda,\T+1)\to 1$ if $\T=o(\nicefrac{1}{\lambda^2})$. One may also compare this upper bound with the trivial lower bound
\[
\mathbb{P}\left(
(x,0)\leftrightarrow\{y\} \times \mathbb{R}_+
\right)
\geq
\left( 
\frac{\lambda}{1+\lambda}	
\right)^{\de(x,y)},
\]
to see that the BRW (branching random walk) and CP (contact process) hitting probabilities are approximately the same for small $\delta$.

These techniques are simple and useful when the degrees are bounded by $\nicefrac{1}{4\lambda^2}$ but cannot treat accurately vertices of larger degrees, as the branching random walk is supercritical if there is a single vertex of degree $k>\nicefrac{1}{\lambda^2}$, while we can still expect that the contact process will not survive around this vertex for time larger than $T(\lambda,k)=e^{\Theta(\lambda^2 k)}$.

To treat the vertices with higher degrees, we start with simple trees before going to more complicated ones:
\begin{itemize}
	\item On a star graph with $k$ leaves, the expected extinction time of the contact process is bounded by $e^{\Theta(\lambda^2 k)}$.
	\item On a tree with one big vertex of degree $k$ and other vertices of degrees bounded by $1/8\lambda^2$, %
	we can still provide an upper bound $e^{\Theta(\lambda^2 k)}$ for the total time that the center is infected, and deduce an upper bound for the hitting probabilities of the vertices in the tree. See Lemma 5.1 and Corollary 5.2.
	\item More complicated trees require more work, and in particular the introducing of another process upper bounding the contact process: the ``contact process--branching random walk'' that we now present.
\end{itemize}

\subsection{Contact process--Branching random walk}\label{sec:CP_BRW}

In order to tackle the case of larger degrees, we use the following simple refinement of the branching random walk upper bound, inspired by~\cite{periodic2020}, where the branching phenomenon is considered only on a specific subset of all the vertices. This upper bound is an intermediate process between the contact process and the branching random walk and shares some features with both, so we will denote it by ``contact process-branching random walk'', or CP-BRW. This process is best described as a ``multitype contact-process'', with the interpretation of the contact process as an interacting particle system as in the previous section.
\begin{definition}[continuous-time CP-BRW with sinks $\cV$]\label{def_continuous_cpbrw}
	Select a set of ``sinks''  $\cV \subseteq V\left(\cG\right)$. The continuous-time CP-BRW with sinks $\cV$ is a multitype contact process, where each type of infection:
	\begin{itemize}
		\item is initiated at some sink $v\in \cV$. 
		\item evolves as the usual contact process on $\left(V\left(\cG\right) \setminus V\right) \cup \{v\}$. Note this CP has thus at most one particle at each site, and can infect the sink $v$, but not the other sinks.
	\end{itemize}
	The different types of particles evolve independently, except for the following rule for the creation of new types of infections:
	\begin{itemize}
		\item For any sink $v\in \cV$, create a new type of infection, initiated at $v$ (and thus starting from only $v$ infected), at a rate given by $\lambda$ times the total number of particles present at the neighbouring sites $z\sim v$, and which have types initiated at other sinks $w\ne v$.
	\end{itemize}
\end{definition}
\begin{remark}
In this description, we stress that the type of an infection is important for two reasons. First, if the type is initiated at $v$, then the contact process evolves on the non-sink sites \emph{and also the site} $v$. But second and more importantly, its evolution is independent from all the previously created types, even if initiated at the same sites. We further stress that:
\begin{itemize}
	\item We typically start the process from one particle, initiated at some sink $v\in \cV$.
	\item A contact process of given type and initiated at a sink $v\in \cV$ cannot imply any creation of new types of infections initiated at $v$. However, it can imply the creation of more than one type of infections initiated at each sink $w\in \cV\setminus \{v\}$.
	\item To each creation of a new type of infection (say at sink $v$), we can identify the particle which led to this creation (necessarily present at a neighbouring site $z\sim v$), and its type. Referring to this type as ``the parent type'', we see that the different types of infections have a natural tree structure, rooted at the original type (the type of the first particle present at time 0).
	\item It is not difficult to see that the total number of particles present at any given site (disregarding their types) is a stochastic upper bound for the original contact process starting from only $v$ infected. This is best proven using the graphical construction of the contact process, as constructed and applied in \cite[III.6]{liggett1985interacting}. 
	Actually, any increase in the set of sinks means a further stochastic upper bound, and if we consider all sites to be sinks, we just obtain (a complicated description of) the continuous-time Branching random walk of  Section~\ref{sec:brw}. Heuristically, adding sinks allows to replace the dynamics of the contact process with the easier dynamics of a branching process, however with the possible drawback of obtaining too crude upper bounds if we introduce too many sinks.
\end{itemize}
\end{remark}
\begin{definition}[discrete-time CP-BRW with sinks $\cV$]\label{def_cpbrw}
	For all sinks $w\in \cV$ and $n\ge 0$, write $\phi^\cV_n(w)$ for the number of types of infections initiated at $w$ and with depth $n$ in this tree. Then $\left(\phi^\cV_n\right)_{n\in \mathbb{N}}$ is a discrete-time branching-random walk on $\cV$, which we will denote as the discrete-time CP-BRW with sinks $\cV$, or simply CP-BRW with sinks $\cV$.
\end{definition}
The discrete-time CP-BRW $\left(\phi^\cV_n\right)_{n\in \mathbb{N}}$ contains much less information than the continuous-time CP-BRW -- in particular the time-dynamics are totally lost -- but it will be sufficient for our purposes. 
From now on, if a type of infection initiated at some sink $v\in \cV$ has created $k$ types of infections initiated at some other sink $w\in \cV$ (namely their children in the tree of the types of the infections), we will simplify the terminology, and simply say that the infection initated at $v$ has ``sent $k$ infections'' to $w$. We will also write $m^{\cV}(v,w)$ for the mean number of infections sent to $w$ by a single infection at $v$, where $v$ need not necessarily be a sink, or simply $m(v,w)$ for $m^{\{w\}}(v,w)$ when there is no other sink. %

We will often want to bound the probability of the infection hitting a target sink $w$. To this end, it is natural to consider the CP-BRW \emph{with absorption at} $w$. It is defined as the CP-BRW in Definition~\ref{def_cpbrw}, except that each walker at $w$ now has the deterministic offpsring of a single walker at $w$. We will denote this process by $\phi^{\cV,[w]}$, and write $m_\infty^{\cV}(v,w):=
\E_{\mathbbm{1}_v}\left[\phi_\infty^{\cV,[w]}(w)\right]$ for the mean number of infections absorbed at the target sink $w$ when the process is started from a single infection at $v$. Note we can bound the probability that the contact process ever hits $w$ by $m(v,w)\le m_\infty^{\cV}(v,w)$. The branching structure of the CP-BRW, allows to express this upper bound as:
\begin{equation}\label{equation_branching_sum}
	m_\infty^{\cV}(v,w) = \sum_{\substack{x_0=v,\ x_k=w\\ x_1 \dots x_{k-1} \in \cV\setminus\{w\}\\  \forall i,\ x_i\ne x_{i-1}}} \prod_{i=1}^k  m^{\cV}(x_{i-1},x_i).
\end{equation}

\section{General upper bounds on the survival probability}\label{sec_general_upper}

We prove the upper bounds of~\eqref{Untruncated_Tree_Asymptotics}, which of course also imply the same upper bounds for the survival probability after truncation, and in particular those stated for large $\T$ in our main theorem. 

First, without loss of generality, we can suppose $q(\{0,1\})=0$ by adding 2 children to each vertex, which of course cannot decrease the survival probability. Consider the distribution $p$ on $\{3,4,\ldots\}$ given by $p(k)=k^{-1}q(k-1)/\sum i^{-1}q(i-1)$, so that we also have $q(k)=(k+1)p(k+1)/\sum i p(i)$. Letting $P_{\rm surv}^{(p,q)}(\lambda)$ denote the survival probability of the contact process starting from only the root infected on the two-stage Galton-Watson tree where the root has offspring distribution $p$ and other vertices have offspring distribution $q$, Proposition (1.4) in~\cite{mountford2013metastable} states
\begin{equation}
P_{\rm surv}^{(p,q)}(\lambda)=
\begin{cases}
O\left(\lambda^\frac{1}{3-\tau}\right) \quad \text{when }\tau \leq 2.5,\\
O\left(\frac{\lambda^{2\tau-3}}{\log^{\tau-2}\left(\nicefrac{1}{\lambda}\right)}\right)  \quad \text{when }\tau \in (2.5,3],\\
O\left(\frac{\lambda^{2\tau-3}}{\log^{2\tau-4}\left(\nicefrac{1}{\lambda}\right)}\right) \quad \text{when }\tau>3.
\end{cases}
\end{equation}
Writing $P_{\rm surv}^{(q)}(\lambda)=P_{\rm surv}(\lambda)$ for the survival probability of the contact process on the GW tree with offspring $q$, we also have
\[
P_{\rm surv}^{(p,q)}(\lambda)\ge \frac \lambda {1+\lambda} P_{\rm surv}^{(q)}(\lambda),
\]
as the contact process has probability $\nicefrac{\lambda}{(1+\lambda)}$ to infect a given neighbour before the recovery of the root, and starting from that point it has probability at least $P_{\rm surv}^{(q)}(\lambda)$ to survive in the subtree containing this neighbour (see also the inequality just after (4.11) in~\cite{mountford2013metastable} for a similar statement). The result follows immediately.

It would also be easy to adapt~\cite{mountford2013metastable} to obtain the lower bounds of~\eqref{Untruncated_Tree_Asymptotics}, for example the cases $\tau>2.5$ are almost immediately covered by their inequality (6.15). However, in next sections we prove the stronger result that these lower bounds still hold after truncation (for large truncation threshold), which cannot follow that easily from~\cite{mountford2013metastable}. In fact when $\tau >2.5$  they use a single path of degrees increasing to infinity, which clearly doesn't exist in the truncated tree. We thus replace this single path of vertices with degrees tending to infinity, by a branching process of high degree vertices (with still bounded maximal degree) which can maintain the infection forever, as already explained in previous section.

\section{\texorpdfstring{Power-law exponent $\tau\le 2.5$}{Power-law exponent t < 2.5}}\label{sec:small_tau}

In this section we address the parameter regime $\tau\leq 2.5$ of our main theorem by Propositions \ref{prop_small_tau_survival} and \ref{prop_small_tau_subcrit_extinction}.

\subsection{Supercritical truncation threshold}

For $\tau\in (2,2.5]$ it suffices to use the classical SIR lower bound of Section \ref{sec:sir_subtree}. 
In a Galton-Watson tree with offspring distribution $D$, the vertices ever infected form a Galton-Watson tree with offspring distribution $Y$, with mean
\begin{equation}\label{mean_offspring_Y}
\E[Y]= \frac {\lambda}{1+\lambda}\E[D].
\end{equation}

Recalling from the Introduction that the underlying offspring distribution has $q([k,+\infty))\sim \cq k^{2-\tau}$, we have the following result.

\begin{proposition}\label{prop_small_tau_survival}
	For $\tau\in (2,2.5]$, define the critical value $\rho_c$ as $\cq \frac {\tau-2}{3-\tau} \rho_c^{3-\tau}=1$ or equivalently
	\begin{equation}\label{def_rho_c}
	\rho_c=\left(\frac {3-\tau}{(\tau-2) \cq}\right)^{\frac{1}{3-\tau}},
	\end{equation}
	and suppose $\rho>\rho_c$ and $\T(\lambda)\ge \rho \lambda^{-\frac{1}{3-\tau}}$. Then we have $P_{\rm surv}^\T(\lambda)=\Omega\left(\lambda^\frac{\tau-2}{3-\tau}\right)$ as $\lambda \downarrow 0$.
\end{proposition}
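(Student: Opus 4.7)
The plan is to use the classical SIR lower bound (set up just above the proposition) and then bound from below the survival probability $\eta$ of the resulting Galton--Watson tree of ever-infected vertices, which has offspring distribution $Y$. The SIR-CP coupling from the graphical representation ensures that if the SIR process has infinite progeny then the ever-infected set of CP is also infinite, hence CP survives forever, so it suffices to prove $\eta = \Omega(\lambda^{(\tau-2)/(3-\tau)})$ for this GW tree.

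The first step is to verify supercriticality and estimate the variance of $Y$. The power-law tail $q([k,\infty)) \sim c_q k^{2-\tau}$ together with summation by parts gives
\[
\mathbb{E}[D] \sim c_q \tfrac{\tau - 2}{3 - \tau}\, \mathcal{T}^{3-\tau}
\qquad \text{and} \qquad
\mathbb{E}[D^2] \sim c_q \tfrac{\tau - 2}{4 - \tau}\, \mathcal{T}^{4-\tau}
\]
for $D \sim \tilde q$ (note $\tau \le 2.5 < 3 < 4$, so both integrals converge at the right end). Substituting $\mathcal{T} = \rho \lambda^{-1/(3-\tau)}$ into~\eqref{mean_offspring_Y} yields $m := \mathbb{E}[Y] \to c_q \tfrac{\tau-2}{3-\tau} \rho^{3-\tau}$, which exceeds $1$ precisely under the hypothesis $\rho > \rho_c$ by~\eqref{def_rho_c}. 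The identity $\mathbb{E}[(1-e^{-\lambda I})^2] = 2\lambda^2/[(1+\lambda)(1+2\lambda)] \sim 2\lambda^2$ combined with $Y \mid D, I \sim \operatorname{Bin}(D, 1 - e^{-\lambda I})$ then shows that the variance $\sigma^2 := \Var(Y)$ is dominated by the contribution $\mathbb{E}[D^2]\, \mathbb{E}[(1-e^{-\lambda I})^2] = \Theta(\lambda^{(2-\tau)/(3-\tau)})$, which \emph{diverges} as $\lambda \to 0$ for $\tau > 2$.

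Finally I conclude with the second-moment method. For the $n$-th generation size $Z_n$, the standard GW formulas $\mathbb{E}[Z_n] = m^n$ and $\Var(Z_n) = \sigma^2 m^{n-1}(m^n - 1)/(m-1)$ combined with Paley--Zygmund give
\[
\mathbb{P}(Z_n > 0) \ge \frac{\mathbb{E}[Z_n]^2}{\mathbb{E}[Z_n^2]} \xrightarrow[n \to \infty]{} \frac{m(m-1)}{m(m-1) + \sigma^2}.
\]
Since $\mathbb{P}(Z_n > 0) \searrow \eta$ and $m(m-1) = \Theta(1)$ under $\rho > \rho_c$, this yields $\eta = \Omega(1/\sigma^2) = \Omega(\lambda^{(\tau-2)/(3-\tau)})$, as claimed.

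The main technical content lies in the variance estimate, while the main conceptual point is that the second-moment bound actually captures the correct order of decay here. Heuristically, expanding the fixed-point equation $1 - \eta = \mathbb{E}[(1-\eta)^Y]$ to second order in $\eta$ gives $\eta \approx 2(m-1)/\mathbb{E}[Y^2]$, so extinction is truly governed by the $L^2$-size of the offspring, which in turn is inflated by the power-law tail concentrated near $\mathcal{T}$. The one potential obstacle is that this only works in the restricted range $\tau \le 2.5$: for larger $\tau$ the same computation gives $\sigma^2$ of a different order, and the Paley--Zygmund bound no longer matches the upper bound of Section~\ref{sec_general_upper}, which is why the subsequent sections replace this argument by a branching process of genuinely high-degree vertices.
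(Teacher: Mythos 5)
Your argument is correct and follows essentially the same route as the paper: reduce to the SIR (Galton--Watson) lower bound, check supercriticality of $Y$ via $\rho > \rho_c$, bound $\Var(Y) = \Theta(\lambda^{(2-\tau)/(3-\tau)})$ through the conditional variance decomposition, and conclude by a second-moment/Paley--Zygmund bound yielding survival probability $\Omega(1/\sigma^2)$. The only cosmetic deviations are that you compute $\mathbb{E}[D^2]$ asymptotically where the paper uses the cruder $\mathbb{E}[D^2] \le \T\,\mathbb{E}[D]$, and you apply Paley--Zygmund to $Z_n$ and pass to the limit rather than to the martingale limit $W$ directly (and, implicitly, you should invoke monotonicity of $P^\T_{\rm surv}$ in $\T$ to reduce the hypothesis $\T \ge \rho\lambda^{-1/(3-\tau)}$ to the exact value, as the paper does); none of this changes the substance.
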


\begin{remark}
	By a slight refinement of our proof that we do not detail, we also get that the contact process survives with probability $\Theta(1)$ when the root is a big vertex with degree $\Theta\left( \lambda^{-\frac{1}{3-\tau}}\right)$, an event which has probability $\Theta\left(\lambda^\frac{\tau-2}{3-\tau}\right)$.
\end{remark}
\begin{proof}
	The survival probability $P_{\rm surv}^\T(\lambda)$ is clearly nondecreasing in $\T$ so we assume $\T(\lambda)\sim \rho \lambda^{-\frac{1}{3-\tau}}$. 
	When $\lambda\to 0$ and $\T\to \infty$, the expectation of the truncated degree $D$ has
	\begin{equation}\label{truncated_degree_expectation}
	\E[D]=\sum_{k=1}^\T \p(D\ge k) \sim \int_1^{\T(\lambda)}\cq (x^{-(\tau-2)}-\T^{-(\tau-2)})\mathrm dx \sim \lambda^{-1} \cq \frac {\tau-2}{3-\tau} \rho^{3-\tau}, 
	\end{equation}
	and thus by~\eqref{mean_offspring_Y} the mean offspring of the Galton-Watson process has $\E[Y]\to m:=\cq \frac {\tau-2}{3-\tau} \rho^{3-\tau},$ which is larger than 1 by the assumption $\rho>\rho_c$, and thus the process is supercritical.  
	We further lower bound the survival probability by a simple second moment computation with
	the following classical lemma.
	\begin{lemma}\label{lem:survivalGW_by2ndmoment}
		For any Galton-Watson process with offspring distribution with mean $m>1$ and variance $\sigma^2$, the survival probability is bounded below by $\frac {m(m-1)}{\sigma^2+m(m-1)}.$
	\end{lemma}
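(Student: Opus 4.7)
The plan is to prove this classical inequality via a second-moment/Paley-Zygmund argument applied to the generation sizes, then pass to the limit. Let $(Z_n)_{n\ge 0}$ denote the size of the $n$-th generation, with $Z_0=1$. Since $\{Z_n>0\}$ is a decreasing sequence of events with intersection equal to the survival event, it suffices to show the liminf in $n$ of $\P(Z_n>0)$ is at least $m(m-1)/(\sigma^2+m(m-1))$.

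The first step is the standard Cauchy-Schwarz lower bound
\[
\P(Z_n>0)\,\E[Z_n^2]\ \ge\ \E[Z_n\,\mathbbm{1}_{\{Z_n>0\}}]^2\ =\ \E[Z_n]^2,
\]
so that $\P(Z_n>0)\ge \E[Z_n]^2/\E[Z_n^2]$. Since $\E[Z_n]=m^n$, it then remains to control the second moment asymptotically.

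The second step is the standard variance recursion: conditioning on $Z_n$ and using that offspring of different individuals are i.i.d. gives
\[
\Var(Z_{n+1})\ =\ m^2\Var(Z_n)+\sigma^2\E[Z_n]\ =\ m^2\Var(Z_n)+\sigma^2 m^n.
\]
Iterating from $\Var(Z_0)=0$ yields the closed form $\Var(Z_n)=\sigma^2 m^{n-1}(m^n-1)/(m-1)$, so that
\[
\frac{\E[Z_n^2]}{\E[Z_n]^2}\ =\ 1+\frac{\sigma^2(m^n-1)}{m^{n+1}(m-1)}\ \xrightarrow[n\to\infty]{}\ 1+\frac{\sigma^2}{m(m-1)}\ =\ \frac{\sigma^2+m(m-1)}{m(m-1)}.
\]
Combining with the first step and letting $n\to\infty$ gives the advertised lower bound.

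There is no real obstacle here beyond assembling these standard ingredients; the only points requiring a bit of care are that $m>1$ is needed to sum the geometric series (equivalently, to keep the variance-to-mean-squared ratio bounded), and that the monotone convergence of $\{Z_n>0\}$ to the survival event is what actually transfers the generation-wise lower bound to the survival probability.
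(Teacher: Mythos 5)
Your proof is correct, and it takes a route that is in the same spirit as the paper's (both are second-moment arguments) but is noticeably more elementary. The paper invokes the martingale limit $W=\lim_n m^{-n}Z_n$, cites Harris for $\E[W]=1$ and $\E[W^2]=1+\sigma^2/(m(m-1))$, and then applies Paley--Zygmund to $W$, implicitly using $\{W>0\}\subseteq\{\text{survival}\}$. You instead apply Cauchy--Schwarz to each finite generation size $Z_n$, derive the variance recursion $\Var(Z_{n+1})=m^2\Var(Z_n)+\sigma^2 m^n$ from first principles, and pass to the limit using the fact that $\{Z_n>0\}$ decreases to the survival event. This bypasses entirely the $L^2$-martingale limit theory (no appeal to Harris, no need to justify $\E[W]=1$, which otherwise relies on $L^2$-boundedness), at the modest cost of carrying out the geometric-sum computation by hand and noting that the ratio $\E[Z_n^2]/\E[Z_n]^2$ increases to its limit so that the generation-wise lower bounds are indeed decreasing. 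Both approaches give exactly the same constant, which is not a coincidence: your limit $1+\sigma^2/(m(m-1))$ \emph{is} the second moment of $W$ that the paper quotes.
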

To prove this lemma, it suffices to use \cite[Chapter 1 (8.4)]{harrisbranching} to see that $W:=\lim_{n \rightarrow \infty} m^{-n} Y_n$, namely the limit of the martingale associated with the GW process, has $\E[W]=1$ and
\[
\E[W^2]=1+\frac{\sigma^2}{m(m-1)},
\]
and conclude by the Paley-Zygmund inequality.
	We come back to the proof of Proposition~\ref{prop_small_tau_survival} and upper bound the variance of $Y$ with
	\begin{align*}
	\Var(Y)&=\Var(\E[Y\big| D,I])+ \E[\Var(Y|D,I)]\\
	&= \Var(D(1-e^{-\lambda I}))+\E[\Var(\operatorname{Bin}(D,1-e^{-\lambda I}))]\\
	&\le \E[D^2]\E[(1-e^{-\lambda I})^2]+ \E[D e^{-\lambda I} (1-e^{-\lambda I})]
	\end{align*}
	The second term in the sum is simply bounded by $\E[Y]$. For the first term, we simply compute $\E[(1-e^{-\lambda I})^2]=\frac {2\lambda^2}{(1+\lambda)(2+\lambda)}=O( \lambda^2)$ and $
	\E[D^2]
	\leq
	\E[D\T]
	=O(\lambda^{-\frac {4-\tau}{3-\tau}})
	$.
	It follows $\Var Y=O(\lambda^{-\frac {\tau-2}{3-\tau}})$ and thus by Lemma~\ref{lem:survivalGW_by2ndmoment} we get the result.
\end{proof}

\subsection{Subcritical truncation threshold}

\begin{proposition}\label{prop_small_tau_subcrit_extinction}
	For $\tau \in (2,2.5)$ and $\rho<\rho_c$, any truncation function
	\[
	\T(\lambda)\leq \rho \lambda^{-\frac{1}{3-\tau}}
	\]
	will have, given $\lambda>0$ sufficiently small, $P_{\rm{surv}}^\T(\lambda)=0$.
	
	For $\tau = 2.5$ and some $\rho$ with
	\[
	\rho\le \frac{1}{4}, \quad \frac{2 \cq \sqrt{\rho}}{1+\sqrt{1-4\rho}} <1
	\]
	any truncation function
	\[
	\T(\lambda)\leq \rho \lambda^{-2}
	\]
	will have, given $\lambda>0$ sufficiently small, $P_{\rm{surv}}^\T(\lambda)=0$.
\end{proposition}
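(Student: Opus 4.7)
The plan is to apply the hostile-environment supermartingale of Lemma~\ref{lemma_hostile_environment_set} to control the expected number of ever-infected vertices in the truncated Galton-Watson tree, and then upgrade this first-moment bound to almost-sure extinction. The tree has maximal degree $\T$, so writing $\delta:=\T\lambda^2$, we have in the case $\tau<2.5$ that $\delta\leq \rho \lambda^{(5-2\tau)/(3-\tau)}\to 0$ as $\lambda\to 0$, while in the case $\tau=2.5$ that $\delta\leq \rho \leq 1/4$ by hypothesis. In either case the lemma's requirement $\delta \leq 1/4$ holds for $\lambda$ sufficiently small, so conditionally on the tree, the lemma bounds the probability that the infection started at the root ever reaches any vertex $v$ at distance $k$ by $\alpha_\lambda^k$, where
\[
\alpha_\lambda := \frac{2\lambda}{1+\sqrt{1-4\delta}}.
\]

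Writing $V_\infty$ for the set of vertices ever infected, summing this bound over $v$ and then taking expectation with respect to the Galton-Watson tree (so the expected size of generation $k$ is $\E[D]^k$) gives
\[
\E|V_\infty|\leq \sum_{k\geq 0}\bigl(\alpha_\lambda \E[D]\bigr)^k,
\]
which is finite as soon as $\alpha_\lambda \E[D]<1$. To establish this last inequality I use the asymptotic $\E[D]\sim c_q \frac{\tau-2}{3-\tau}\T^{3-\tau}$ computed in~\eqref{truncated_degree_expectation}. For $\tau<2.5$, $\delta\to 0$ gives $\alpha_\lambda/\lambda\to 1$, and hence $\alpha_\lambda \E[D]\to c_q \frac{\tau-2}{3-\tau}\rho^{3-\tau}$, strictly less than $1$ precisely because $\rho<\rho_c$ by~\eqref{def_rho_c}. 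For $\tau=2.5$, the product tends to $\frac{2c_q\sqrt{\rho}}{1+\sqrt{1-4\rho}}$, strictly less than $1$ by the second hypothesis on $\rho$.

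In either case, $\E|V_\infty|<\infty$ forces $|V_\infty|<\infty$ almost surely, so that after some random finite time the infection is confined to a finite random subgraph of the tree. The contact process on any finite graph reaches its absorbing all-healthy state in finite time almost surely (it is a finite-state continuous-time Markov chain with $\mathbf{0}$ absorbing and reachable from every state in any unit time interval), yielding $P^\T_{\rm surv}(\lambda)=0$. I do not anticipate any truly hard step here: Lemma~\ref{lemma_hostile_environment_set} provides the needed hitting bound \emph{deterministically} on each realisation of the tree, so no uniformity issue arises, and the GW expectation factorises over generations. The only delicate point is making the two threshold computations tight so that $\rho<\rho_c$ (respectively $\frac{2c_q\sqrt{\rho}}{1+\sqrt{1-4\rho}}<1$) is \emph{exactly} what is needed for $\alpha_\lambda \E[D]<1$, which is why the hypotheses in the statement take precisely those forms.
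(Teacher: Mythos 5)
Your proposal is correct and takes essentially the same route as the paper: apply Lemma~\ref{lemma_hostile_environment_set} to bound the hitting probability of a depth-$k$ vertex by $\bigl(2\lambda/(1+\sqrt{1-4\delta})\bigr)^k$, multiply by the expected number $\E[D]^k$ of depth-$k$ vertices from~\eqref{truncated_degree_expectation}, and observe that $\rho<\rho_c$ (resp.\ $\frac{2c_q\sqrt{\rho}}{1+\sqrt{1-4\rho}}<1$) makes the product strictly less than one. The only cosmetic difference is that you sum over $k$ to bound $\E|V_\infty|$ and then invoke a.s.\ extinction on a finite graph, whereas the paper simply notes that the probability of ever reaching distance $k$ tends to zero; these are the same first-moment argument.
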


\begin{proof}
	The infection process dies out a.s. if we can show that the probability of the infection reaching distance $k$ from the root tends to 0 as $k\to \infty$. To this end, we simply combine the mean number of vertices at depth $k$ given by~\eqref{truncated_degree_expectation} with the probability of infecting any given such vertex upper bounded by Lemma~\ref{lemma_hostile_environment_set}.
	
	For $\tau=2.5$, we take $\delta=\rho$ in Lemma~\ref{lemma_hostile_environment_set} to obtain
		\[
		\frac{2\lambda \E[D]}{1+\sqrt{1-4\rho}}\to \frac{2 \cq \sqrt{\rho}}{1+\sqrt{1-4\rho}}<1,
		\]
	which taken to the power $k$ indeed tends to 0.
	
	For $\tau<2.5$, the truncation threshold is $o(\lambda^{-2})$, and thus bounded by $\delta/\lambda^2$ with arbitrary $\delta>0$. We conclude in that case by observing
		\[
	\frac{2\lambda \E[D]}{1+\sqrt{1-4\delta}}\to \frac{2}{1+\sqrt{1-4\delta}} \cq \frac {\tau-2}{3-\tau} \rho^{3-\tau},
	\]
	which is less than 1 when $\rho<\rho_c$ and $\delta$ small.
\end{proof}

\section{\texorpdfstring{Power-law exponent $\tau\in(2.5,3]$}{Power-law exponent 2.5 < t < 3}}\label{sec:moderate_tau}

In this section we address the parameter regime $\tau\in(2.5,3]$ of our main theorem. First in Section \ref{ssec:large_thresh_moderate} we lower bound the survival probability in the supercritical case, and finally in Theorem \ref{theorem_moderate_conclusion} we find our subcritical upper bound.

\subsection{Large truncation threshold}\label{ssec:large_thresh_moderate}

When $2.5<\tau\le 3$ we ensure survival of the infection by using again a SIR-like lower bound, with the refinement that big vertices use their small children to maintain the infection locally for a long enough time so as to typically infect its big children.

More precisely, say a vertex is big if it has offspring larger than $\s=\mbox{$\frac{1}{\eps}$} \lambda^{-2}\log(\nicefrac{1}{\lambda})$, where $\eps$ is as in Lemma~\ref{lem:star_survival_center}. Note we will typically assume $\rho>\nicefrac{1}{\eps}$, so that the truncation threshold $\T=\rho \lambda^{-2}\log(\nicefrac{1}{\lambda})$ satisfies $\T\ge\s$, as is necessary to have any big vertex.

Suppose the root vertex $o$ is big, with degree $D_o\ge \s$, and consider the contact process evolving only on $o$ and its first $\s$ children (namely we consider here only infections going back and forth to these $\s$ children). If for this contact process the total time $o$ is infected is at least $\nicefrac 1 \lambda$, we say there is local survival at $o$, and write $L_o$ for this event. By Lemma~\ref{lem:star_survival_center}, the probability of $L_o$ is at least $1-\eps c/\log(\nicefrac{1}{\lambda})$, which is larger than $\nicefrac{1}{2}$ for small $\lambda$.

Moreover, conditionally on $L_o$, the vertex $o$ is infected for a time period (at least)  $\nicefrac{1}{\lambda}$, during which it communicates (at least) $\operatorname{Pois}(1)$ infections to each of its remaining $D_o-\s$ children, which are therefore independently infected with probability at least $1-e^{-1}\ge \nicefrac{1}{2}$. We then consider each of these infected children as a source of an independent infection living in its own branch for which we use the same survival strategy.

Doing so, we can ensure survival of the contact process if we have survival of a Galton-Watson process with an offspring distribution $Y$ that is constructed in three steps:
\begin{itemize}
	\item First, pick the offspring of the vertex, $D\sim X \1_{X\le \T}$.
	\item If $D\ge \s$, then pick $L\sim \operatorname{Ber}(\nicefrac 1 2)$, where $L=1$ stems for local survival at the vertex.
	\item Finally, pick $Y\big| D, L \sim \operatorname{Bin}\left(\1_{D\ge \s} L.(D-\s),\frac 12\right),$ the number of the remaining $D-\s$ children of the vertex to which the vertex communicates an infection.
\end{itemize}

As for the case $\tau\le 2.5$, we lower bound the survival probability by a second moment method on $Y$ using Lemma~\ref{lem:survivalGW_by2ndmoment}. We have if $\tau<3$
\[
\E[Y]=\frac 1 2 \E[\1_{D\ge \s}L.(D-\s)]
=\frac 1 4 \E[(D-\s) \1_{D\ge \s}]
=\Omega(\E[D])
=\Omega(\T^{3-\tau})
\]
while in the case $\tau=3$:%
\[
\E[Y]=\frac 1 4 \E[(D-\s) \1_{D\ge \s}]
= \frac 1 4  \int_\s^\T \p(D \geq z) {\rm d}z
\sim \frac{\cq}{4}
\left(
\log(\eps \rho)
-1+\frac{1}{\eps \rho}
\right)
>1
\]
so that the final inequality is true whenever $\rho$ is large enough.

The variance of $Y$ has
\begin{align*}
\Var(Y)&= \E\left[\Var(Y \big| D, L)\right] + \Var\left(\E[Y \big| D, L]\right) \\
&\le \E\left[\frac {1} 4 \1_{D\ge \s}L.(D-\s)\right] 
+ 
\Var\left(\frac {1} 2 \1_{D\ge \s}L.(D-\s) \right) \\
&\le \frac {\E[Y]}2+\frac {\T-\s}2\E[Y],
\end{align*}
where in the second term we bound the variance by the expectation of the square of the random variable and use that the random variable is bounded by $(\T-\s)/2$. By Lemma~\ref{lem:survivalGW_by2ndmoment} we then obtain a lower bound for the survival probability of order
\[\frac {\E[Y]} \T=\Theta(\T^{2-\tau})=\Theta\left(\frac {\lambda^{2\tau-4}}{\log^{\tau-2}\frac 1 \lambda}\right).\]

\subsection{Small truncation threshold}\label{sec:small_truncation_threshold_in_2.5-3}

We still consider the truncation threshold 
$\T= \rho \lambda^{-2}\log \frac{1}{\lambda}$, but now with a smaller value of $\rho$ (in particular with $\rho$ smaller than $1/\eps$ from last section), so heuristically a vertex will maintain the infection locally at most up to time 
\[
e^{\Theta(\lambda^2 \T)}= \lambda^{-\Theta(\rho)},
\]
which, although being large, will be smaller than $\lambda^{-1}$ by the choice of $\rho$. 
Thus even a vertex of maximal degree has only small probability of ever infecting any given direct neighbour, which one might see as a first indication that getting upper bounds in that case might be feasible. 
To provide a rigorous upper-bound, though, we will:
\begin{itemize}
	\item First consider the case of a single vertex of degree larger than $1/8\lambda^2$. In that case, we 
\end{itemize}
 use the Contact process-Branching random walk of Section~\ref{sec:CP_BRW}, with sinks all ``big vertices'', namely all vertices with degree larger than $1/8\lambda^2$.

As

\begin{lemma}\label{lemma_expected_infectious_period_general}
	Consider $\left(\sT_{\D,\left\lfloor \frac{1}{8\lambda^2}\right\rfloor},o\right)$ the infinite rooted tree with root $o$ of degree $\D$ and all other vertices of degree $\left\lfloor \frac{1}{8\lambda^2}\right\rfloor$, and start the infection from only the root infected. Then the average total time the root is infected is bounded by
	\[
	\E\left[
	\int_0^\infty \xi_t(o) {\rm d}t
	\Big|
	\xi_0=\mathbbm{1}_o
	\right]
	\leq
	53 e^{72\lambda^2 \D}.
	\]
\end{lemma}

\begin{proof}

As in \cite{mountford2013metastable,liggett1999stochastic} and others we consider an exponentially weighted sum
\[
H_t
:=
\sum_{v \neq o} \xi_t(v) (2\lambda)^{\de(o,v)-1}.
\]
From the calculation \cite[Equation 5.1]{mountford2013metastable} we can bound its expected infinitesimal increase by
\begin{equation}\label{eq_drift}
	\frac{\E\left[ H_{t+\rm d t}- H_t | \cF_t \right]
		}{{\rm d}t}
	\leq
	\lambda \D \, \xi_t(o)
	-\frac{1}{4}
	H_t,
\end{equation}
where $\cF_t$ is the $\sigma-$algebra generated by $(H_s)_{s\le t}$.
By these dynamics, the process $H_t$ is likely to quickly go to 0 when the root is healthy, while it is likely to quickly go to the value $4\lambda \D$ (or close to this value) when the root is infected. Of course, the root also changes its state from infected to healthy at rate 1, and from healthy to infected at rate $\lambda$ times the number of infected neighbours, which is bounded above by $\lambda H_t$, as neighbours of the root have weight 1.

To bound the total time the root is infected, we first check whether its first recovery is a permanent recovery or not. If this fails, we first ask the process $H_t$ to go below level $8\lambda \D$ before checking for the next recovery and repeating the procedure.

More precisely, we introduce the sequences of stopping times $(R_k)_{k\ge 1}$, $(S_k)_{k\ge 1}$ and $(T_k)_{k\ge 0}$ defined recusrively by $T_0=0$ and for $k\ge 1$:
\begin{align*}
	R_k&:=\inf \{t\ge T_{k-1}, \xi_t(o)=0\},\\
	S_k&:=\inf \{t\ge R_k, \xi_t(o)=1\},\\
	T_k&:= \inf \{t\ge S_k, H_t\le 8\lambda \D\}.
\end{align*}
We claim that:
\begin{enumerate}
	\item Conditionally on $T_{k-1}$ being finite, the total time the center is infected on the time interval $[T_{k-1},T_k]$, which is bounded by $(R_k-T_{k-1})+(T_k-S_k)$, is in average bounded by a constant, and more precisely%
	\[
	\E\left[(R_k-T_{k-1})+(T_k-S_k)\ \big|\ T_{k-1}<+\infty\right]\le 1+16\log(1+\nicefrac 9 8).
	\]
	\item Conditionally on $T_{k-1}$ being finite, we have a probability at least $\frac{1}{4}e^{-72\lambda^2 \D}$ of having $T_k$ infinite, and more precisely of failing to reinfect the center after time $R_k$, leading to $S_k$ being infinite.
\end{enumerate}
Combining these two claims we deduce the result by
\begin{align*}
\E\left[
\int_0^\infty \xi_t(o) {\rm d}t
\right]&\le \sum_{k\ge 1} \P(T_{k-1}<+\infty) \E\left[\int_{T_{k-1}}^{T_k}\xi_t(o)\rm d t \Big| T_{k-1}<+\infty\right] \\
&\le \left(1+16\log(1+\nicefrac 9 8)\right) \cdot 4e^{72\lambda^2 \D}\le 53 e^{72\lambda^2 \D}.
\end{align*}
Thus it remains to check the claims in $(i)$ and $(ii)$.

For the first point, we clearly have $\E[R_k-T_{k-1}| T_{k-1}<+\infty]\le 1$, as the center recovers at rate 1 (if infected at time $T_{k-1}$). We continue with the following claim:
\begin{claim*}
	Write $T_{[0,8\lambda \D]}(H):=\inf\{t\ge 0,H_t\le 8\lambda \D\}$. Then, starting from any configuration with $H_0=h_0$, we have
	\[
	\E\left[T_{[0,8\lambda \D]}(H)\right]\le 16 \log\left(1+\frac {h_0}{8\lambda \D}\right).
	\]
\end{claim*}
Indeed, consider the process $Y_t=\log(1+H_t/8\lambda\D)+t/16$. For any $t< T=T_{[0,8\lambda \D]}(H)$ and so $H_t>8\lambda\D$, we can use the concavity of the log function and conditional Jensen inequality to see that the process $Y$ has nonpositive expected infinitesimal increase:
\[
\frac{\E\left[Y_{t+\rm d t}-Y_t | \cF_t\right]}
{{\rm d}t}
\leq
\frac{1}{8\lambda\D}\cdot
\frac{\lambda \D
	-\frac{1}{4}H_t}{1+ \frac{H_t}{8\lambda\D}}
+\frac{1}{16}
=
-\frac{1}{4}+\frac{1}{16}+\frac{3\lambda\D}{8\lambda\D+H_t}
\leq
0
,
\]
 and therefore $Y_{t\wedge T}$ is a nonnegative supermartingale. We deduce
\[
\E[T]\le \E[16 Y_T]\le \E[16 Y_0]=16  \log\left(1+\frac {h_0}{8\lambda \D}\right).
\]

It now follows that
\[
\begin{split}
\E[T_k-S_k | T_{k-1}<+\infty]
&\le 16  \E\left[\log\left(1+\frac {H_{S_k}}{8\lambda \D}\right)\Big| T_{k-1}<+\infty\right]\\
&\le 16  \log\left(1+\frac {\E[H_{S_k}| T_{k-1}<+\infty]}{8\lambda \D}\right),
\end{split}
\]
using again Jensen's inequality. We thus have to control the conditional expectation $\E[H_{S_k}| T_{k-1}<+\infty]$. To this end, observe that $T_{k-1}\le 8\lambda \D$ and that the expected infinitesimal increase of $H$ is bounded by $\lambda \D$ by~\eqref{eq_drift}. The recovery of the root occurs at rate 1, and so 
	\[
	\E[H_{R_k}|T_{k-1}<+\infty]\leq 9\lambda \D.
	\]
	Using again~\eqref{eq_drift} when the center is healthy, we see that the process $(H_{t\wedge S_k})_{t\ge R_k}$ is a nonnegative supermartingale, which therefore has $\E[H_{S_k}| T_{k-1}<+\infty]\le 9\lambda \D$. Note that we might have $S_k=+\infty$ if the recovery of the root is permanent, but in that case the result still holds (with $H_{S_k}=H_{\infty}=0$). Finally,
\[
\E[T_k-S_k | T_{k-1}<+\infty]\le 16 \log(1+\nicefrac{9}{8}),
\]
with the understanding $T_k-S_k=0$ if $S_k=+\infty$.
It remains to prove the second point.

	Suppose that the centre is never reinfected after time $R_k$. If this happens, by \eqref{eq_drift} we see that $(e^{t/4}H_{t})_{t \geq R_k}$ is a nonnegative supermartingale. Hence by Markov's inequality
\[
\mathbb{P}\left(
\int_{R_k}^\infty H_t {\rm d}t > 72\lambda \D
\bigg| T_{k-1}<+\infty
\right)
\leq
\frac{9\lambda \D \int_{R_k}^\infty e^{-(t-R_k)/4} {\rm d}t}{72\lambda \D}
=\frac{1}{2}
\]
and so on the complement event, recalling that we infect the root at rate bounded by $\lambda H_t$, the probability that we really don't ever see reinfection of the root is at least
$
\frac{1}{2}e^{-72\lambda^2 \D}
.
$ That is,
\[
\mathbb{P}\left(
S_k=+\infty | T_{k-1}<+\infty
\right)
\geq
\frac{1}{4}e^{-72\lambda^2 \D},
\]
which concludes the second point and thus the proof.
\end{proof}

\begin{corollary}\label{cor_infections_at_target}
	Consider $\left(\sT_{\D,\left\lfloor \frac{1}{8\lambda^2}\right\rfloor},o\right)$ the infinite rooted tree with root $o$ of degree $\D$ and all other vertices of degree $\left\lfloor \frac{1}{8\lambda^2}\right\rfloor$, with an infection sink $v$  at distance $\de(v,o)=k\geq 1$. The number of infections we expect it to receive from an infection at $o$ is 
	\[
	m(o,v) \le  27 e^{72\lambda^2 \D} (2\lambda)^k.
	\]
\end{corollary}

\begin{proof}
We let $w \sim o$ be the neighbour of the root on the path from $o$ to $v$. While infected, the root keeps infecting $w$ at rate $\lambda$, so we can bound\footnote{Note that the number of infections from $o$ to $w$ has a direct influence on the possible reinfections of $o$ from $w$ and thus on the total time the root is infected. So we do NOT condition on the total time the center is infected to obtain our upper bound on the mean number of infections from $o$ to $w$.} the mean total number of infections from $o$ to $w$ by the contact process by $\lambda \cdot 53 e^{72\lambda^2 \D}\le 27 e^{72\lambda^2 \D} (2\lambda)$. This allows to conclude the case $k=1$ where $w=v$. Suppose now $k\ge 2$, and let $(U_n)_{n\ge 1}$ be the ordered infection times from $o$ to $w$, with $U_n=+\infty$ if the number of such infections is less than $n$. As previously explained, we then have $N:=\inf \{n\ge 0, U_{n+1}=+\infty\}$ random with $\E[N]\le  27 e^{72\lambda^2 \D} (2\lambda)$. Now, any infection path from $o$ to $v$ can be decomposed into a first part of the path leading to $w$ at some time $U_n$, and a second part of the path from $w$ to $v$ without returning to the root, namely living in the subtree $\sT(w)$ obtained by disconnecting $w$ from the root. So the number of infections sent to $v$ is bounded by
\[
\sum_{n\ge 1}\1_{U_n<+\infty} X_n(w,v),
\]
where $X_n(w,v)$ is the number of infections sent to $v$ for the contact process in $\sT(w)$ started at time $U_n$ with only $w$ infected. Note that $X_n(w,v)$ is not independent from $N$, from $U_{n+1}$ or from $X_{n+1}(w,v)$, but still the $U_n$ are stopping times, and conditionally on $U_n$ being finite, we can bound the expectation of $X_n(w,v)$ by $(2\lambda)^{d(w,v)},$ as the tree $\sT(w)$ has degrees bounded by $1/8\lambda^2$. So the mean number of infections sent from $o$ to $v$ has
\begin{align*}
m(o,v)&\le \E\left[\sum_{n\ge 1}\1_{U_n<+\infty} X_n(w,v)\right] \\
&\le \E[N] (2\lambda)^{k-1}\le 27 e^{72\lambda^2 \D} (2\lambda)^k.
\end{align*}
\end{proof}

The former results holds on trees which contains vertices of degree $\left\lfloor \frac{1}{8\lambda^2}\right\rfloor$ and only one high-degree vertex. On more complicated trees with several high-degree vertices, we will bound the infection by a CP-BRW with sinks all high-degree vertices. However, in order for this CP-BRW to die out locally (and thus for the upper bound to be of any use), we will request a control on the high degrees (have them bounded by $\T$), as well as on the number of such high degree vertices. More precisely, we consider a tree where only a small proportion $q$ of the neighbours of any given vertex has degree $\T$.

\begin{definition}
	Let
	\[
	q:=\lambda^{145\rho}.
	\]
	Define $(\cB,o)$ the infinite rooted tree with root of degree $\T$ and vertices of degree either $\left\lfloor \frac{1}{8\lambda^2}\right\rfloor$ or $\T$, where for each vertex $v\in\cB$: 
	\begin{itemize}
		\item $\left\lfloor q \de(v) \right\rfloor$ of the neighbours of $v$ have degree $\T$,
		\item the remaining $\de(v)-\left\lfloor q \de(v) \right\rfloor$ have degree $\left\lfloor \frac{1}{8\lambda^2}\right\rfloor$.
	\end{itemize}
	For $v$, $w$ two vertices in $\cB$, we let $[v,w]$ be the unique path $(v_0=v,v_1,\ldots, v_{d(v,w)})$ in $\cB$ from $v$ to $w$, and $\nbig(v,w)$ be the number of big vertices on the path $[v,w]\setminus w$, so that we always have $\nbig(v,v)=0$.
\end{definition}

\begin{lemma}\label{lemma_infections_in_B}
Consider a target sink $v$ on $\cB$ at distance $\de(v,o)=k\geq 1$. 
For $\rho$ sufficiently small, the number of infections we expect $v$ to receive from $o$ has %
\[
m(o,v)\le \left(55\lambda\right)^{
\de(o,v)
-
72\rho\, \nbig(o,v)
}.
\]
\end{lemma}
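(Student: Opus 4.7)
The plan is to apply the CP-BRW of Definition~\ref{def_cpbrw} with sink set $\cV$ equal to the set of all big vertices of $\cB$, augmented with $v$ if $v$ is small, so that by monotonicity $m^{[v]}_\infty\le m^{\cV,[v]}_\infty$. For each sink $x\in\cV$, the $x$-infection of this CP-BRW evolves on the connected component $\cB^x$ of $x$ in $\cB\setminus(\cV\setminus\{x\})$: a tree with $x$ of degree at most $\T$ and all other vertices of degree at most $\lfloor 1/(8\lambda^2)\rfloor$, and hence dominated by the tree $\sT$ of Corollary~\ref{cor_infections_at_target}. Applying that corollary together with Lemma~\ref{lemma_expected_infectious_period} gives, for every sink $y\neq x$ which is the first sink along some branch out of $x$,
\[
m^\cV(x,y)\le 45\lambda^{-72\rho}(2\lambda)^{\de(x,y)},
\]
with $m^\cV(x,y)=0$ for sinks reachable only through an intermediate sink.

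I then expand $m^{\cV,[v]}_\infty$ via the branching formula~\eqref{equation_branching_sum} and organise the sum with the weight $w(s):=(91\lambda)^{\de(o,s)-72\rho\,\nbig(o,s)}$, noting $w(o)=1$ and $w(v)=(91\lambda)^{\de(o,v)-72\rho\,\nbig(o,v)}$. The direct BRW path following the consecutive big sinks of $[o,v]$ has its product telescoping to at most
\[
\prod_{i=1}^{\nbig(o,v)}45\lambda^{-72\rho}(2\lambda)^{k_i}=45^{\nbig(o,v)}\cdot 2^{\de(o,v)}\cdot \lambda^{\de(o,v)-72\rho\,\nbig(o,v)},
\]
which, for $\rho$ sufficiently small and using $\de(o,v)\ge\nbig(o,v)$, is bounded by $(91\lambda)^{\de(o,v)-72\rho\,\nbig(o,v)}=w(v)$; here the gap between $45\cdot 2=90$ and $91$ provides the necessary slack to absorb the factor $91^{72\rho\,\nbig(o,v)}$ for $\rho$ small.

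The core of the argument is then a weighted-subcriticality estimate: one establishes uniformly in $x\in\cV$ that
\[
\sum_{y\text{ first sink from }x}m^\cV(x,y)\cdot\frac{w(y)}{w(x)}\le \alpha<1,
\]
which by the branching formula yields $m^{[v]}_\infty\le w(v)/(1-\alpha)$, and the multiplicative constant is absorbed by a slight relaxation of the base. To verify this, I split the first sinks of $x$ into the unique backward first-sink along the tree-path toward $o$, and the forward and off-path first-sinks. The backward term at distance $d$ contributes $45(2/91)^d\cdot 91^{72\rho}$, giving the dominant $(90/91)\cdot 91^{72\rho}$ at $d=1$. For the forward and off-path targets at distance $d$, one has $\nbig(o,y)=\nbig(o,x)+1$ and $\de(o,y)=\de(o,x)+d$, so the per-sink weighted factor equals $45(2\cdot 91)^d\cdot 91^{-72\rho}\cdot\lambda^{2d-144\rho}$; summing over the $\sim\de(x)\cdot q\cdot\lfloor1/(8\lambda^2)\rfloor^{d-1}$ such targets and telescoping, the total is of order $\rho\log(1/\lambda)\,\lambda^\rho=o(1)$, thanks precisely to the tuning $q=\lambda^{145\rho}$ which makes the $\lambda$-exponent reduce to $\rho$.

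The main obstacle is this last step: the unweighted sum $\sum_y m^\cV(x,y)$ over first sinks diverges already at distance one (since $\lfloor q\T\rfloor\cdot 88\lambda^{1-72\rho}$ is of order $\lambda^{73\rho-1}\to\infty$ for $\rho<1/73$), and only after multiplication by the decreasing weight $w(y)/w(x)$---together with the careful geometric bookkeeping of first-sinks at each distance and the decay supplied by $q=\lambda^{145\rho}$---do the forward and off-path contributions collapse to $o(1)$. For $\rho$ sufficiently small, the subcriticality constant $\alpha$ is then determined by the backward term and stays strictly below one.
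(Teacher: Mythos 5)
Your high-level plan matches the paper's: decompose via the CP-BRW with the big vertices as sinks, use Corollary~\ref{cor_infections_at_target} for $m^\cV(x,y)$, and establish a weighted multi-type subcriticality estimate. However, the execution fails at the crucial subcriticality step, and the failure is not a matter of bookkeeping but of choosing an unworkable weight.

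First, the direction of the ratio is inverted. If $\sum_y m^\cV(x,y)\,w(y)/w(x)\le\alpha<1$ for all $x$, the associated supermartingale is $M_n:=\sum_u\phi_n^{\cV,[v]}(u)\,w(u)$, and optional stopping gives $m^{\cV,[v]}_\infty\le w(o)/w(v)=1/w(v)$, which is a huge number, not $w(v)$. To extract $m^{\cV,[v]}_\infty\le w(v)\cdot\alpha/(1-\alpha)$ one needs the condition with $w(x)/w(y)$ (equivalently weight the supermartingale by $1/w$). Second, and more fundamentally, with the single-base weight $w(s)=(91\lambda)^{\de(o,s)-72\rho\nbig(o,s)}$ the uniform subcriticality is simply false, in either direction. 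Taking the corrected ratio $w(x)/w(y)=(91\lambda)^{-d+72\rho}$ for a forward first sink at distance $d$, the per-sink factor is $45(2/91)^d 91^{72\rho}$, independent of $\lambda$; multiplied by the $\sim\T q(8\lambda^2)^{-(d-1)}$ such sinks this gives a contribution at level $d$ proportional to $\T q\lambda^2\cdot(2/(91\cdot8\lambda^2))^d=\T q\lambda^2\cdot(364\lambda^2)^{-d}$, which diverges as $d\to\infty$. With your stated direction $w(y)/w(x)$ the per-sink factor is $45(182)^d91^{-72\rho}\lambda^{2d-144\rho}$ and after multiplying by the sink count one gets $\Theta\bigl(\rho\lambda^\rho\log(1/\lambda)\cdot(182/8)^d\bigr)$; the $d=1$ term is the $o(1)$ you quote, but the sum over $d$ again diverges geometrically. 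The ``telescoping'' you invoke does not rescue this: the $\lambda$-exponent stabilises at $\rho$ at every level, but the prefactor grows like $(22.75)^d$.

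The paper avoids both problems by centring the weight on the target and by decoupling the two exponents: $M_n=\sum_w\phi_n^{\cV,[v]}(w)\,(3\lambda)^{\de(w,v)}\alpha^{\nbig(w,v)}$ with $\alpha=91\lambda^{-72\rho}$. The base $3\lambda$ for the distance exponent is tuned so that $\tfrac{2\lambda\cdot3\lambda}{8\lambda^2}=\tfrac34<1$ makes the many away-from-$v$ sinks summable, while the separate factor $\alpha$ supplies the $1/91$ needed so that the unique toward-$v$ sink contributes $\tfrac{45}{91}(2/3)^d\le\tfrac{90}{91}<1$. No single base $a\lambda$ can play both roles: to make the away-from-$v$ side converge one needs $a<4$, but then the $45\cdot(2/a)$ coefficient on the toward-$v$ side exceeds $1$; the $(91\lambda)$-compound form appears only in the final statement, after optional stopping gives $m^{\cV,[v]}_\infty\le(3\lambda)^{\de(o,v)}\alpha^{\nbig(o,v)}$. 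Your direct-path computation $\prod 45\lambda^{-72\rho}(2\lambda)^{k_i}\le(91\lambda)^{\de-72\rho\nbig}$ is correct and captures the right heuristic, but it only bounds the single geodesic BRW path, and the uniform control you need over all the other paths does not follow from your weight.
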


\begin{proof}
We bound the infection on $\cB$ by considering the CP-BRW with sinks $\cV:=\{v \in V(\cB): \de(v)=\T\}\cup\{v\}\ni o$ and target sink $v$. 

We first consider an infection at a given sink $w\in \cV\setminus \{v\}$ and bound the number of infections it can send to other sinks. To do so, we have to simulate the contact process on the subtree $\cB_w$ of $\cB$ obtained by turning the sinks (except $w$) into leaves. The vertices in $\cB_w \setminus\{w\}$ then have degrees bounded by $\left\lfloor \frac{1}{8\lambda^2}\right\rfloor$ and we can use Corollary~\ref{cor_infections_at_target} to upper bound the mean number of infections sent to any other sink $u$ as
\[m^{\cV}(w,u)\le 27 \lambda^{-72 \rho} (2\lambda)^{d(w,u)}.\]
We will also need bounds on the number of such sinks that we can attain at any given distance from the target sink $v$.
The vertex $w$ is connected by a path of smaller degree vertices to fewer than
\[
\T \cdot \left( \frac{1}{8\lambda^2} \right)^{k-1} \cdot q
\]
other vertices $u \in\cV$ with $\de(u,v)=\de(w,v)+k$, by paths of length $k$. Also, we can reach vertices $u \in\cV$ with $\de(u,v)=\de(w,v)+k$ by paths of length $2m+k$ by initially going back up the tree: for any $m \geq 1$, we use that the shortest path on the tree between two sinks %
is directly via the most recent common ancestor to claim
\[
\left|
\left\{
u \in \cB_w \cap \cV:
\de(u,v)=\de(w,v)+k,
\de(u,w)=2m+k
\right\}
\right|
< q \left(
\frac{1}{8\lambda^2}
\right)^{m+k}.
\]

Thus 
the expected amount of infections sent $k$ levels deeper in the tree from $w$  
is bounded by
\[
 27 \lambda^{-72\rho}
 \left(
(2\lambda)^{k}
 \cdot q \T \left( \frac{1}{8\lambda^2} \right)^{k-1}
 +
 \sum_{m=1}^\infty
(2\lambda)^{2m+k}
 \cdot q \left( \frac{1}{8\lambda^2} \right)^{m+k}
 \right)
 \]
 \[
=
 27 \lambda^{-72\rho} (4\lambda)^{-k}
 \left(
 q \T \cdot 8\lambda^2
 +
 q
 \right)
\leq
 45 \lambda^{-72\rho} (4\lambda)^{-k}
 \left(
 9 \lambda^2 q \T
 \right) 
 .
\]

Similarly, the expected amount of infections sent to the level of $w$ is bounded by
\[
\sum_{m=1}^\infty
q \left(
\frac{1}{8\lambda^2}
\right)^m
\cdot
 27 \lambda^{-72\rho}
(2\lambda)^{2m}
=
 27q \lambda^{-72\rho}.
\]

For infections going closer to $v$ by $k \geq 1$ levels, there are three options:
\begin{itemize}
	\item There is some sink ancestor of $w$ (including $v$) at distance strictly less than $k$, so the infection from $w$ will have to infect this intermediate sink first, and there is simply no vertex $u$ in $\cB_w$ satisfying $d(u,v)=d(w,v)-k$.
	\item  There is a sink ancestor $u \in \cV$ with $\de(w,u)=k$, in which case we expect to infect it fewer than
	\begin{equation}\label{eq_mean_backwards}
	27 \lambda^{-72\rho}
	(2\lambda)^{k}
	\end{equation}
	times.
	\item This ancestor is still not a sink and so we consider longer paths that return to the same level.
\end{itemize}
The same calculation $\sum_{m=1}^\infty q (8\lambda^2)^{-m}
\cdot (2\lambda)^{2m}\ll 1$
however shows that the third case is bounded by the second and so we have the bound of \eqref{eq_mean_backwards} in general for the expected infection going back $k$ levels.

Recall now the notation $\left(\phi^{\cV,[v]}_n\right)_{n\in \mathbb{N}}$ from Section~\ref{sec:CP_BRW} for the BRW on the set of sinks with absorption at $v$, and define
\[
M_n
:=
\sum_{w \in \cV} \phi^{\cV,[v]}_n(w) (3\lambda)^{\de(w,v)}
\alpha^{\nbig(w,v)},
\]
for some parameter $\alpha$. We can ensure that $M_n$ is a supermartingale if the condition $\E\left[\nicefrac{M_{n+1}}{M_n}\right]\le 1$ is satisfied, and putting the above computations together gives
\[
\begin{split}
\E\left[\frac{M_{n+1}}{M_n}\right]\leq
E&=\sum_{k=1}^\infty 27 \lambda^{-72\rho}
(2\lambda)^{k} (3\lambda)^{-k} \alpha^{-1}
+
27q \lambda^{-72\rho}
+
\sum_{k=1}^\infty
 27 \lambda^{-72\rho}
(4\lambda)^{-k}
 \left(
 9 \lambda^2 q \T
 \right) 
 (3\lambda)^k \alpha\\
 &=27 \lambda^{-72\rho}\left(
 \alpha^{-1}
 \frac{2\lambda}{3\lambda-2\lambda}
 +q
 +9\lambda^2 q\T \alpha \frac{3\lambda}{4\lambda-3\lambda}
 \right)\\
 &\leq 27 \lambda^{-72\rho}\left(
 \frac{2}{\alpha}
 +q
 +27\rho q  \alpha  \log \frac{1}{\lambda} 
 \right)
\end{split}
\]
We can thus ensure $E\le 1$ by setting $\alpha=55 \lambda^{-72\rho}$, in which case the first term is less than $\nicefrac{54}{55}$ and the other two are $o(1)$ as $\lambda \downarrow 0$. The supermartingale $M_n$ then converges a.s. to $M_\infty\ge \phi^{\cV,[v]}_\infty(v)$, and as usual by optional stopping we have
\[
m^{\cV}_\infty(o,v)
\le\E_{\1_o}\left[M_\infty\right]\leq M_0
=
(3\lambda)^{\de(o,v)}
\left(55 \lambda^{-72\rho}\right)^{\nbig(o,v)}
\]
which proves the claim.
\end{proof}

Write $(\cA,o)$ for the rooted Galton-Watson tree with power law offspring distribution but truncated at $\T$.
We now have all prerequesite results to upper bound the survival probability on the random tree $\cA$. 

\begin{theorem}\label{theorem_moderate_conclusion}
For any $\delta>0$, the contact process with
\[
\xi_0=\mathbbm{1}_o
\]
on $(\cA,o)$ will survive to infinity with probability less than
\[
e^{-\lambda^{-2+\delta}}
\]
when $\rho$ and $\lambda$ are sufficiently small.
\end{theorem}

\begin{proof}

The tree $\cA$ can naturally contain several high-degree vertices, with degree inbetween $\left\lfloor \frac{1}{8\lambda^2}\right\rfloor$ and $\T$. However, if we consider one vertex of degree $d$ in $\cA$, its children will be high-degree vertices independently with probability
\[
p:=\mathbb{P}\left(
D \geq \left\lfloor \frac{1}{8\lambda^2}\right\rfloor
\right)
\sim
\cq \left( 8\lambda^2  \right)^{\tau-2},
\]
which is much smaller than $q$ when $\rho$ is small. It will then be extremely unlikely that it has more than $q\T$ high-degree neighbours (or $q \left\lfloor \frac{1}{8\lambda^2}\right\rfloor$ if $d\le \left\lfloor \frac{1}{8\lambda^2}\right\rfloor$). 
We now choose a distance $k$ as
\[k:=\left\lceil \lambda^{150\rho-2} \right\rceil.
\]
We consider the ball $B(o,k)$ in $\cA$, namely the subtree of $\cA$ consisting of all vertices at distance at most $k$ from $o$, as well as its ``boundary'' $\partial B(o,k)$, which consists of all vertices as distance exactly $k$ from $o$. In order to bound the survival probability on $\cA$ we:
\begin{itemize}
	\item show that $B(o,k)$ is likely to be a subtree of $\cB$. We write $\cE$ for this event.
	\item on the event $\cE$, bound the probability of hitting $\partial B(o,k)$, and thus the survival probability on $\cA$, thanks to Lemma~\ref{lemma_infections_in_B}.
\end{itemize}
Recall in this case that we assume $\T(\lambda)\leq \rho \frac{ \log \frac{1}{\lambda}}{\lambda^2}$ and so in the strict interior of this ball we cannot have more than
\begin{equation}\label{eq_maximal_ball}
\sum_{i=0}^{k-1}\T^i \sim \T^{k-1}
\leq
\exp\left(
3\lambda^{150\rho-2}\log \frac{1}{\lambda}
\right)
\end{equation}
vertices. To fit the ball $B(o,k)$ inside $\cB$, we simply check successively that each vertex does not have too many children of large degree. For a given vertex, we have the uniform bound for the failure probability
\[
\begin{split}
\max_{m < \left\lfloor\frac{1}{8\lambda^2}\right\rfloor}
&\mathbb{P}\left(
\operatorname{Bin}\left(
m,p
\right)
\geq
\left\lfloor q\left\lfloor\frac{1}{8\lambda^2}\right\rfloor\right\rfloor-1
\right)
\vee
\max_{\left\lfloor\frac{1}{8\lambda^2}\right\rfloor \leq n < \T}
\mathbb{P}\left(
\operatorname{Bin}\left(
n,p
\right)
\geq
\left\lfloor q\T\right\rfloor-1
\right)\\
&\leq\mathbb{P}\left(
\operatorname{Bin}\left(
\left\lfloor\frac{1}{8\lambda^2}\right\rfloor -1,p
\right)
\geq
\left\lfloor q\left\lfloor\frac{1}{8\lambda^2}\right\rfloor\right\rfloor-1
\right)
\vee
\mathbb{P}\left(
\operatorname{Bin}\left(
\T-1,p
\right)
\geq
\left\lfloor q\T\right\rfloor-1
\right)\\
&=
\mathbb{P}\left(
\operatorname{Bin}\left(
\left\lfloor\frac{1}{8\lambda^2}\right\rfloor-1,p
\right)
\geq
\left\lfloor q\left\lfloor\frac{1}{8\lambda^2}\right\rfloor\right\rfloor-1
\right)\\
&\leq 
\mathbb{P}\left(
\operatorname{Pois}\left(
\frac{p}{8\lambda^2}
\right)
\geq
\left\lfloor q\left\lfloor\frac{1}{8\lambda^2}\right\rfloor\right\rfloor-1
\right)
\leq
\frac{e^{\frac{p}{8\lambda^2}\left( e-1 \right)}}{e^{\left\lfloor q\left\lfloor\frac{1}{8\lambda^2}\right\rfloor\right\rfloor-1 }}
\leq
\exp\left(-\frac{q}{9\lambda^2} \right)
\end{split}
\]
for $\rho$ sufficiently small. Thus the probability that we ever see too many big children is bounded by the expected number of times this happens, which (recalling $q=\lambda^{145\rho}$) provides the bound
\[
\mathbb{P}(\cE^c)\le 2\T^{k-1}\exp\left(-\frac{1}{9} \lambda^{145\rho-2} \right)
\leq
\exp\left(-\frac{1}{10} \lambda^{145\rho-2}\right).
\]

With the event $\cE$ we write, as in the notation of \eqref{equation_branching_sum}, \[m_\cE(o,\partial B(o,k)):=\sum_{v \in \partial B(o,k)}m_\cE^{\partial B(o,k)}(o,v)\] for the mean infections sent, restricted to the event $\cE$. 
 We can then use Lemma~\ref{lemma_infections_in_B} to obtain that the mean number of infections hitting any $v \in \partial B(o,k)$ has
\[
m_\cE^{\partial B(o,k)}(o,v) \leq 
\left(55\lambda\right)^{
	\de(o,v)
	-
	72\rho\, \nbig(o,v)
} \leq 
\left(55\lambda\right)^{
(1
-
72\rho )k
}.
\]

We conclude by linearity of the expectation
\[
m_\cE(o,\partial B(o,k))
\leq
\left(55\lambda\right)^{
(1
-
72\rho )k
}
\E\left[
\left|\partial B(o,k)\right| \mathbbm 1_\cE
\right]
\leq 
\left(55\lambda\right)^{
	(1
	-
	72\rho )k
}
\E\left[
\left|\partial B(o,k)\right|
\right].
\]

It remains to control the expected number of targets. The $\tau \in (2.5,3)$ offspring mean is
\[
\begin{split}
\E[D] &\leq \int_0^{\T-1} \mathbb{P}(D>x) {\rm d}x
\sim \int_0^\T \left(
\cq x^{2-\tau} - \cq \T^{2-\tau}
\right) {\rm d}x\\
&=\cq\left(\frac{1}{3-\tau}-1\right)\T^{3-\tau}
\sim \cq \rho^{3-\tau} \left(\frac{\tau-2}{3-\tau}\right) \lambda^{2\tau-6} \log^{3-\tau}\lambda.
\end{split}
\]
and so
\[
\begin{split}
m_\cE(o,\partial B(o,k))
&\leq
\left(55\lambda\right)^{
(1
-
72\rho )k
}
\E\left[
D
\right]^k\\
&\leq\exp\left(
-\left(
2\tau-5-73\rho
\right)
\lambda^{150\rho-2}\log\frac{1}{\lambda}
\right)
\end{split}
\]
where we require small $\rho$ to make a useful bound. 
Altogether by the union bound and Markov's inequality
\[
\begin{split}
\mathbb{P}(\text{infection survives})
&\leq
\mathbb{P}(\cE^c)+
m_\cE(o,\partial B(o,k))\\
&\leq \exp\left(-\frac{1}{10} \lambda^{145\rho-2}\right)
+
\exp\left(
-\left(
2\tau-5-73\rho
\right)
\lambda^{150\rho-2}\log\frac{1}{\lambda}
\right)\\
&\leq
e^{-\lambda^{-2+\delta}}
\end{split}
\]
for sufficiently small $\rho$.

Finally, in the excluded $\tau=3$ case we instead calculate $\E[D] \sim 2\cq \log \tfrac{1}{\lambda}$. This has slower logarithmic growth as $\lambda\downarrow 0$ than the polynomial growth we saw above for any $\tau \in (2.5,3)$, and so by comparison to one of the other $\tau$ values the same bounds still apply.
\end{proof}

\section{\texorpdfstring{Power-law exponent $\tau>3$}{Power-law exponent t > 3} }\label{sec:large_tau}

In this section we address the final parameter regime $\tau>3$ of our main theorem. First in Section \ref{ssec:large_thresh_large} we lower bound the survival probability in the supercritical case. The upper bound for the subcritical case is addressed in Section \ref{ssec:small_thresh_large}, in part by Lemma \ref{Small_Tree_S} for the event of the environment and then at the end of the section we argue for the subevent of the process. For this, we introduce the environment offspring mean parameter
\[
\mu:=\E(X)>1
\]
which was infinite in the previous sections.

\subsection{Large truncation threshold}\label{ssec:large_thresh_large}

For $\tau>3$, an significant difference with the case $\tau\le 3$ is that big vertices are (typically) not direct neighbours but further apart. More precisely, we have to look at the neighbourhood of a big vertex up to distance $c\log(\nicefrac{1}{\lambda})$ for some $c>(2\tau-6)/\log \mu$ in order to typically find other big vertices. The larger threshold $\T=\rho \lambda^{-2} \log^{2} ( \nicefrac{1}{\lambda} )$ however allows for the local survival at a big vertex to last so long that we typically will infect those close big vertices.

Calling ``big vertex'' a vertex with degree larger than $\nicefrac{\T}{2}$, the large truncation result in the case $\tau>3$ follows from the following proposition:
\begin{proposition}
	For large enough $\rho$ and small $\lambda>0$, the survival probability given the root is big, is bounded below by $\nicefrac{1}{3}$.
\end{proposition}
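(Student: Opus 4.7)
The plan is to construct an embedded supercritical Galton--Watson tree $\cT_{\rm big}$ of ``successfully infected big vertices'' and conclude by a second-moment argument. Set $\mu=\E[X]<\infty$ (finite because $\tau>3$) and fix $k=\lceil c\log(1/\lambda)\rceil$ with $c$ a constant slightly larger than $(2\tau-4)/\log\mu$. Every vertex of $\cA$ is big with probability $p_{\rm big}\asymp\T^{2-\tau}\asymp\rho^{2-\tau}\lambda^{2\tau-4}\log^{-2(\tau-2)}(1/\lambda)$, and the expected number of its descendants at depth $j$ is $(1+o(1))\mu^j$. By the choice of $c$, the expected number of big descendants of a given big vertex within distance $k$ is of order $p_{\rm big}\mu^k$ and tends to infinity as $\lambda\to 0$. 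Define the \emph{big children} of $v$ to be the big descendants $w$ with $\de(v,w)\leq k$ and no intermediate big vertex strictly between $v$ and $w$ on the path. By the Galton--Watson branching property the subtrees rooted at distinct big children are disjoint and independent, so $\cT_{\rm big}$ is itself a Galton--Watson tree whose offspring mean tends to infinity as $\lambda\to 0$ (for any fixed $\rho$).

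The next step is to show that whenever the infection reaches a big vertex $v$, each big child $w\in\cT_{\rm big}$ is subsequently infected with probability $1-o(1)$. I would partition the $\geq\T/2$ direct children of $v$ into a local-survival pool (the first $\T/2$ children, serving as a reinfecting star) and a propagation pool (the other children, through which the big children of $v$ are reached). By Lemma~\ref{lem:star_survival_center} applied to the local-survival pool, $v$ accumulates infectious time $T_{\rm loc}\geq \exp(c_1\lambda^2\T)=\lambda^{-c_1\rho\log(1/\lambda)}$ with probability $1-o(1)$. During $T_{\rm loc}$, the infection at $v$ launches Poissonian waves at rate $\lambda$ into the propagation pool, each of which survives to reach a specific descendant at distance $j\leq k$ with probability at least $(\lambda/(1+\lambda))^{j}\geq (\lambda/2)^j$. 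The expected number of waves arriving at $w$ is thus at least of order $T_{\rm loc}\,\lambda^{j}/2^j\to \infty$ once $c_1\rho>c$, which holds for $\rho$ large, giving transmission probability $1-o(1)$. Because distinct big children lie in disjoint subtrees, these transmission events are independent, and the tree of infected big vertices stochastically dominates a Galton--Watson process with arbitrarily large mean offspring; Lemma~\ref{lem:survivalGW_by2ndmoment} applied as in the large-threshold cases $\tau\leq 3$ of the previous sections then yields a survival probability exceeding $1/3$ from the big root.

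The main obstacle I anticipate is cleanly decoupling the local-survival event at $v$ from the transmissions to its big descendants; I would address this by freezing the graphical construction on the local-survival pool of $v$ (which certifies $T_{\rm loc}$), and using only the independent remaining randomness on the propagation pool and on its disjoint descendant subtrees. A secondary technical step is bounding the variance of the offspring of $\cT_{\rm big}$ for the second-moment estimate: the offspring can in principle be as large as $\T^k$, but a careful computation separating the fluctuation in the underlying truncated Galton--Watson population from the conditional Bernoulli ``big'' indicators, in the spirit of the $\tau\in(2.5,3]$ large-threshold analysis, should keep the variance-to-mean-squared ratio bounded, leaving room for Lemma~\ref{lem:survivalGW_by2ndmoment} to conclude.
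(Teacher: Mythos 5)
Your skeleton matches the paper's proof: build a supercritical branching structure of big vertices at mutual distance $\Theta(\log(1/\lambda))$, hold the infection locally at each big vertex by star survival, pass it on to the next layer of big vertices, and conclude via a branching-process survival bound. There is, however, a genuine gap in the transmission step. The implication ``expected number of waves arriving at $w$ is $\gtrsim T_{\rm loc}\lambda^j/2^j\to\infty$, hence transmission probability is $1-o(1)$'' is not valid as stated, because your waves are not independent: they share the graphical construction on the path from $v$ to $w$ (the transmission and recovery Poisson clocks on each intermediate edge and vertex are common to all waves emitted by $v$), and a large expected number of arrivals is perfectly compatible with a small probability of at least one arrival. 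Your conclusion is morally right, but it needs a real argument --- a thinning/spacing argument to produce disjoint waves, or, as the paper does, the star-to-star transmission estimate Lemma~\ref{lemma_star_communication} (adapted from~\cite[Lemma~3.2]{mountford2013metastable}), which is precisely the black box covering this step and whose proof handles the dependence that your ``waves'' picture glosses over.

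Two smaller remarks. Your constant $c>(2\tau-4)/\log\mu$ should be $c>(2\tau-6)/\log\mu$: a big vertex has degree $\geq \T/2$, so its depth-$j$ descendants number $\Theta(\T\mu^{j-1})$ rather than $(1+o(1))\mu^j$ as you wrote, which buys an extra factor $\T\asymp\lambda^{-2}$. Using your larger $c$ is harmless (it merely forces a larger $\rho$), but dropping that factor is where your exponent went astray. Also, the paper does not apply Lemma~\ref{lem:survivalGW_by2ndmoment} here: it uses a second-moment estimate only to concentrate $D_R$, the number of vertices at distance exactly $R$ (via the variance bound from~\cite[I.A.2]{athreyaney}), union-bounds over those targets using Lemma~\ref{lemma_star_communication}, and obtains the $1/3$ from the $\tfrac{1}{e}(1-o(1))$ probability (via~\cite[Lemma~3.1(i)]{mountford2013metastable}) that the root initially lights up a constant fraction of its neighbours. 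Your plan to apply Lemma~\ref{lem:survivalGW_by2ndmoment} to the offspring of $\cT_{\rm big}$ is plausible, but the variance bound you defer is entangled with the transmission dependence and is not as routine as the remark suggests.
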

We start from a big vertex and look at $D_R$ the total amount of vertices at distance exactly $R= \left\lfloor c \log (\nicefrac{1}{\lambda}) \right\rfloor.$ We have
\[\E[D_R]\ge \frac{\T}{2} \E[D]^{R-1},\]
With by assumption $\E[D]>1$. By \cite[I.A.2]{athreyaney}, its variance has
\[
\mathbb{V}{\rm ar} D_R \leq
\T \cdot
\mathbb{V}{\rm ar} D\
\frac{\E[D]^{2R-1}}{\E[D]-1}.
\]
Note that $\mathbb{V}{\rm ar} D$ is of order $O(\T^{4-\tau})=o(\T)$ for $\tau\in(3,4)$, and of logarithmic or constant order if $\tau\ge 4$, thus $\mathbb{V}{\rm ar}\left| \partial B\left(o,R\right) \right| \ll \left(\E \left| \partial B\left(o,R\right) \right| \right)^2$ and so by the second moment method we have
\[
D_R \geq \frac{\T}{3} \E[D]^R
\]
with high probability in $\lambda$. Then each of these vertices is independently big with probability $\Theta(\T^{-(\tau-2)})$. Now, we have
\[
\T^{-(\tau-3)} \E[D]^R \geq
\nicefrac{
\lambda^{(2 \tau-6)-c \log \E[D]}
}{
\left(
\log \frac{1}{\lambda}
\right)^{O(1)}
}
\gg 1,
\]
using $\E[D]\to\mu$ and $c\log\mu>2\tau-6$, and so with high probability in $\lambda$ the number of big vertices at distance exactly $R$ is of the same order.

Now we consider the infection. By \cite[Lemma 3.1(i)]{mountford2013metastable} we have probability $\nicefrac{1}{e}-o(1)$ to infect a proportion larger than ${\lambda}/{16e}$ of the neighbours of the root, and from this point we can use the following lemma to infect all nearby vertices, provided the truncation threshold was large enough. We state here a variation of this lemma directly applicable to our settings:
\begin{lemma}[ {\cite[Lemma 3.2]{mountford2013metastable}} ]\label{lemma_star_communication}
	For large $\rho$ and small $\lambda$, for any big vertex $x$ and near vertex $y$, in the sense
	\[
	\deg(x) \geq \frac{\rho \log^2 \frac{1}{\lambda}}{2 \lambda^2}
	\quad\text{ and }\quad
	\de(x,y) \leq c \log \frac{1}{\lambda},
	\]
	we have
	\[
	\mathbb{P}\left(
	\exists t>0 : \frac{\xi_t(B(y,1))}{\lambda \deg(y)}> \frac{1}{16 e}
	\Bigg|
	\frac{\xi_0(B(x,1))}{\lambda \deg(x)}> \frac{1}{16 e}
	\right)
	>1-2e^{-c \log^2 \frac 1 \lambda}
	\]
	where $\xi_t$ applied to a set of vertices denotes the total number of infected vertices in that set.
\end{lemma}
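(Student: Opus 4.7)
The plan is a three-stage argument: use the star centred at $x$ as a long-lived infection reservoir, run many approximately independent attempts to forward the infection along the short path to $y$, and finally invoke a standard star-colonisation result to populate $B(y,1)$. Each stage should fail with probability much smaller than $e^{-c\log^2(1/\lambda)}$, and a union bound will give the claim.

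\textbf{Stage 1 (long local survival at $x$).} I would apply Lemma~\ref{lem:star_survival_center} to the star centred at $x$: the initial condition $\xi_0(B(x,1))/(\lambda\deg(x))>1/(16e)$ provides a starting population of $\gtrsim \rho\log^2(1/\lambda)/(32e\lambda)$ infected neighbours, well inside the long-survival regime. For $\rho$ large, this should yield that, except on an event of probability at most $\exp(-c_1\rho\log^2(1/\lambda))$, the density condition $\xi_t(B(x,1))/(\lambda\deg(x))>1/(16e)$ persists throughout the interval $[0,T]$ with $T:=\exp(c_2\rho\log^2(1/\lambda))$.

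\textbf{Stage 2 (propagation along the path).} Write $P=(x=v_0,v_1,\dots,v_\ell=y)$, $\ell\le c\log(1/\lambda)$. I would split $[0,T]$ into $T$ unit windows and, in each one, attempt the event that $v_1$ is infected at the start of the window and a straight chain along the edges $(v_1,v_2),\dots,(v_{\ell-1},v_\ell)$ then fires within time~$1$ with each intermediate $v_i$ not recovering before infecting $v_{i+1}$. This event has probability at least $\lambda^{\ell-1}/C_3\ge\exp(-c\log^2(1/\lambda))/C_3$ in each window. By running Stage~1 on the marks of edges $\{x\}\times(B(x,1)\setminus\{v_1\})$ only, the propagation attempts become conditionally independent of Stage~1 and approximately independent across windows, so the probability that every attempt fails is at most $\exp(-T\lambda^{\ell-1}/C_3)=\exp(-\exp((c_2\rho-c)\log^2(1/\lambda))/C_3)$, which for $\rho>c/c_2$ is doubly exponentially small in $\log(1/\lambda)$.

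\textbf{Stage 3 (density at $y$).} Each time a chain from Stage~2 arrives at $y$, I would apply \cite[Lemma~3.1(i)]{mountford2013metastable}: starting from $y$ infected, with probability $\ge\tfrac{1}{2e}$ a proportion $\ge\lambda/(16e)$ of the neighbours of $y$ become infected before $y$ recovers, which is the target condition. Using disjoint windows from Stage~2 as independent arrival times at $y$ and exploiting monotonicity in the initial condition to deal with interferences between attempts, the probability that none of these trials succeeds is again doubly exponentially small; if $\deg(y)$ is bounded, a single chain reaching $y$ already satisfies the target since $\lambda\deg(y)/(16e)<1$ for small $\lambda$. The three stages then combine by a union bound.

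\textbf{Main obstacle.} The delicate point will be the coupling in Stage~2: making the many propagation attempts sufficiently independent \emph{while} keeping the reservoir at $x$ alive. My approach is to split the graphical construction by edge --- Stage~1 uses only the marks on edges $\{x\}\times(B(x,1)\setminus\{v_1\})$, whereas Stage~2 uses only the marks on the interior edges of $P$ together with the recovery marks on $v_2,\dots,v_\ell$. The residual dependence arising from recovery marks on path vertices spanning several windows should be absorbed into the constant $C_3$ by a standard Markov-restart argument in the spirit of those in \cite{mountford2013metastable}.
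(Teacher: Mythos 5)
The paper does not actually prove this lemma: it is cited verbatim as a variation of \cite[Lemma~3.2]{mountford2013metastable}, so there is no in-paper proof to compare against. Your three-stage architecture (maintain the reservoir at the $x$-star, push the infection along the short path by repeated independent attempts, colonise the $y$-star) is indeed the standard route and matches the idea of the cited lemma.

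However, Stage~1 contains a genuine error. You invoke Lemma~\ref{lem:star_survival_center} to claim that the density at $x$ persists for time $T=\exp(c_2\rho\log^2(1/\lambda))$ with failure probability at most $\exp(-c_1\rho\log^2(1/\lambda))$. But Lemma~\ref{lem:star_survival_center} (i) is stated for the initial condition of \emph{only the centre infected}, not for a populated initial condition, and (ii) gives a failure probability of order $c/(\lambda^2 k)\sim 1/(\rho\log^2(1/\lambda))$, which is only polynomially small in $\log(1/\lambda)$ and nowhere near the exponentially-small bound you assert (and need, to survive a union bound over $\exp(c\log^2(1/\lambda))$ windows in Stage~2). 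The correct tool here is Lemma~\ref{lem:metastable_step}: starting from $M_0\ge \lambda\deg(x)/(16e)\gtrsim\rho\log^2(1/\lambda)/\lambda$ infected neighbours, iterating it over unit steps shows the density stays above $\lambda\deg(x)/100$ for $T$ steps with total failure probability at most $3T\exp\bigl(-\lambda^2\deg(x)/12800\bigr)\le 3T\exp\bigl(-\rho\log^2(1/\lambda)/25600\bigr)$, which is the exponentially small (in $\log^2(1/\lambda)$) bound you want provided $T$ is slightly smaller and $\rho$ is large enough. Swapping in this lemma repairs Stage~1; Stages~2 and~3 are plausible modulo the acknowledged independence bookkeeping, which is a real technicality but of a standard restart type.
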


Taking $\rho$ large as in this Lemma, by the union bound, we then succeed in infecting all the nearby vertices $y$ of the Lemma with failure probability bounded by
\[
1-\left(
1-2e^{-c \log^2 \frac 1 \lambda}
\right)^{|B(x,c \log \nicefrac{1}{\lambda})|}
=
O^{\log \frac{1}{\lambda}}\left(e^{-c\log^2 \frac{1}{\lambda}}
\lambda^{(2 \tau-6)-c \log \E[D]}
\right)\to 0.
\]
Altogether, we thus have probability $\nicefrac{1}{e}-o(1)$ to infect all nearby vertices, and in particular all the big vertices at distance $R$ from the root. Now, from each of these infected big vertices, we can imagine cutting the parent edge and start an independent infection process in its own branch, thus obtaining a branching process of big vertices ever hit by the infection, which has an offspring mean tending to infinity and so is clearly supercritical, with survival probability larger than  $\nicefrac{1}{e}-o(1)>\nicefrac{1}{3}.$

\subsection{Small truncation threshold}\label{ssec:small_thresh_large}

For small $\rho$, the ``small'' truncation threshold $\T=\rho \lambda^{-2}\log^2 (\nicefrac{1}{\lambda})$ is no more sufficient to pass the infection from a big vertex to another big vertex at distance $c\log(\nicefrac{1}{\lambda})$ apart, and the previous survival strategy fails, but of course this is not sufficient to prove extinction of the epidemic.

Observe that for large truncation threshold, a typical environment ensuring a high survival probability for the contact process was to consider the root being a big vertex, and other big vertices being found at distance roughly $c\log(\nicefrac{1}{\lambda})$ apart. In this section, we will in particular want to show that for a small truncation threshold, 
such an environment provides only a small survival probability. 
However, the probability of having the root and one direct neighbour of the root both big vertices is polynomially small in $\lambda$, so in order to obtain our main result we also should upper bound the survival provability of the infection on such an environment\footnote{Similarly, the event that the root has 5 big neighbours -- or any given constant number of big neighbours -- is also polynomially unlikely, and so we also have to bound the survival probability on this event.}.

\begin{figure}[t!]
	\centering
	\centerline{\includegraphics[width=0.8\textwidth ]{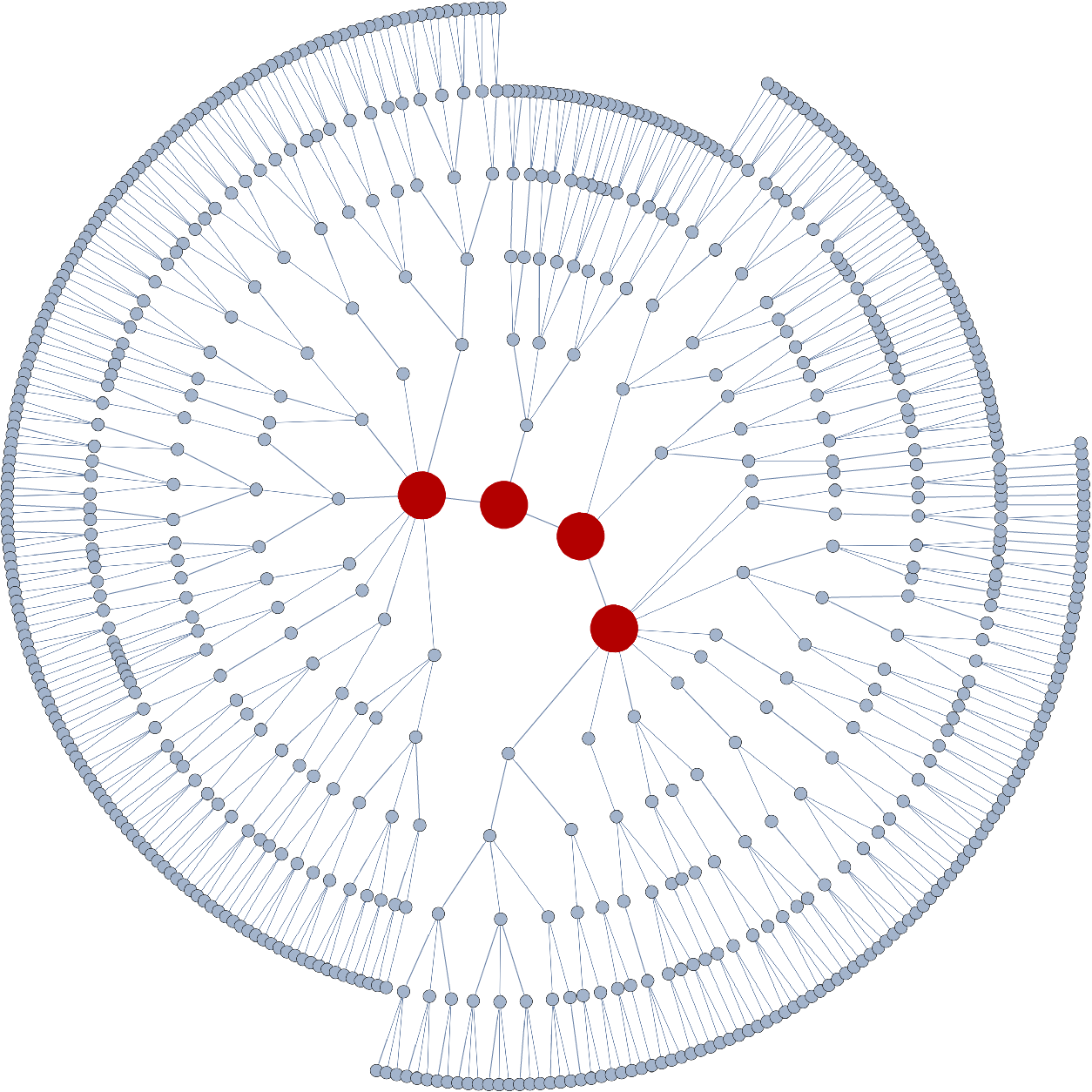}}
	\caption{We explore the $\lceil \delta \log(\nicefrac{1}{\lambda})\rceil$-ball of one (big) vertex of high degree, discovering another vertex of high degree. Thus $|\B|=2$ in this exploration but the four highlighted vertices are the set $\cC$, around each of which we explored a ball of radius $\lceil \delta \log(\nicefrac{1}{\lambda})\rceil$ and found no further big vertices. Note also  $\cC^+$ is the set of every vertex in this picture. In our proof, the whole set $\cC$ acts as a big vertex for which we want to control the number of infections sent far away as in Subsection~\ref{sec:small_truncation_threshold_in_2.5-3}. Colour available online.}
\end{figure}

Inspired by these heuristics, we consider $\delta>0$ some parameter, with value to be determined later. We consider the core $\cC$ the smallest subtree containing the root such that any vertex $x \notin \cC$ with $\de(x,\cC)<\delta \log(\nicefrac{1}{\lambda})$ is small in the sense it has degree less than $1/8\lambda^2$. We also denote by $\B$ the set containing the root and all other big vertices in $\cC$, and 
\[
\cC^+=B(\cC,\lceil \delta \log(\nicefrac{1}{\lambda})\rceil),
\]
the $\lceil \delta \log(\nicefrac{1}{\lambda})\rceil$-neighbourhood of $\cC$. %
Note this tree $\cC^+$ contains no big vertex outside of $\cC$. Note also that we have
\[|\cC|\le |\B| \lceil\delta \log(\nicefrac{1}{\lambda})\rceil,\]
as we can construct $\cC$ by starting from the root and sequentially adding each big vertex and a path of length at most $\lceil\delta \log(\nicefrac{1}{\lambda})\rceil$ linking it to the existing tree.

Note that in this construction, nothing guarantees that $|\B|$, $|\cC|$ or $|\cC^+|$ are small or even finite, however part of the proof will be to show that with an appropriate choise of $\delta$ and $\rho$ and with probability $1-o(1)$ in the limit of small $\lambda$, these sizes can be controlled, and more precisely
\[
|\B|\le \frac \delta {400 \rho},\qquad |\cC^+|\le \frac \delta {400 \rho} \T^{1+\delta \log \mu}.
\]
An environment satisfying these restrictions will be called a ``good environment''. Thus in a good environment we have fewer than a (large) constant number of large vertices, and then their average ball size is less than $\omega(1)$ factor larger than expectation. Lemma~\ref{Small_Tree_S} below bounds the probability of observing a bad environment.

 \begin{lemma} \label{Small_Tree_S}
	Choose $C>0$ arbitrarily large. If $\delta$ and then $\rho$ are chosen small enough, the environment is good with failure probability bounded by $\lambda^C$ as $\lambda\to 0$.
\end{lemma}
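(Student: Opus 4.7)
I would split the bad-environment event into the two pieces $E_1 := \{|B| > k_0\}$ and $E_2 := \{|S^+| > k_0\T^{1+\delta\log\mu}\}\cap\{|B|\le k_0\}$, where $k_0 := \lfloor\delta/(1000\rho)\rfloor$, and handle each separately. For $E_1$, I would dominate the growth of $B$ by a subcritical branching process: starting from the root, we sequentially reveal, for each big vertex $v$ discovered so far, all previously unseen big vertices lying in the ball $B(v, \delta\log(1/\lambda))$, each of them becoming a new particle of the branching process. The offspring mean is bounded by the expected size of that ball times the probability a vertex is big, giving
\[
    C\T\,\mu^{\delta\log(1/\lambda)} \cdot \p\!\bigl(X>1/(8\lambda^2)\bigr)
    \;\le\;
    C\rho\,\lambda^{2\tau-6-\delta\log\mu}\,\log^2(1/\lambda),
\]
which tends to $0$ with $\lambda$ once $\delta$ is chosen small enough that $\delta\log\mu<2\tau-6$ (here we use $\tau>3$). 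Standard tail bounds for a subcritical Galton--Watson process with small offspring mean $m$ and bounded offspring give $\p(|B|\ge k)\le (Cm)^{k-1}$, and picking $\rho$ small enough that $(k_0-1)(2\tau-6-\delta\log\mu)\ge 2C$ yields $\p(E_1)\le \lambda^C$.

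For $E_2$, since the paths successively added to $S$ all have length at most $\delta\log(1/\lambda)$, we have $S\subseteq B(B,\delta\log(1/\lambda))$ and hence $S^+\subseteq B(B,2\delta\log(1/\lambda))$. The key structural observation I would exploit is that \emph{every vertex of $S^+\setminus S$ is small} (of degree $\le 1/(8\lambda^2)$), by the very definition of $S$. Consequently
\[
    |S^+\setminus S|
    \;=\;
    \sum_{i=1}^{N} T_i,
\]
where $N$ is the number of edges from $S$ to its complement in the tree (bounded by $2k_0\T$ on $\{|B|\le k_0\}$ for small $\lambda$, since the big-vertex contribution $|B|\T$ dominates the small-vertex one $|S\setminus B|/(8\lambda^2)$) and, conditionally on $S$, the $T_i$ are i.i.d.\ copies of the size of a GW subtree of depth at most $\delta\log(1/\lambda)$ with offspring distribution \emph{truncated at $1/(8\lambda^2)$}. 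Each satisfies $\E[T_i]\le C\mu^{\delta\log(1/\lambda)}=C\lambda^{-\delta\log\mu}$, so the expected total is smaller than the threshold $k_0\T^{1+\delta\log\mu}$ by a polynomial factor $(\T\lambda)^{\delta\log\mu}\sim \lambda^{-\delta\log\mu}$. To upgrade this mean estimate to a $\lambda^C$ tail bound I would apply Rosenthal's inequality for sums of i.i.d.\ random variables, combined with moment estimates $\|T_i\|_p = O_p(\mu^{\delta\log(1/\lambda)})$ obtained by iterating a single-generation Rosenthal bound across the $O(\log(1/\lambda))$ generations of the BP defining $T_i$, using the boundedness of its offspring. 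Taking $p$ slightly larger than $C/(\delta\log\mu)$ in Markov's inequality on the $p$-th moment then gives $\p(E_2)\le\lambda^C$.

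The principal obstacle is establishing the moment bound $\|T_i\|_p=O_p(\mu^r)$ uniformly in $\lambda$: for $\tau\in(3,4)$ even $\Var(X\wedge 1/(8\lambda^2))$ diverges polynomially, so one must track in the single-generation Rosenthal estimate
\[
    \E\bigl[Z_{j+1}^p\mid Z_j\bigr]
    \;\le\;
    C_p(Z_j\mu)^p+C_p Z_j^{p/2}\sigma^p+C_p Z_j\,\E[X^p]
\]
that the supercritical first term remains dominant throughout the iteration, and similarly in the Rosenthal bound for $\sum T_i$ one has to check that the potentially large ``variance'' and ``raw $p$-th moment'' corrections are absorbed by the large number of summands $N$ (a polynomial in $1/\lambda$). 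Finally one must account for the conditioning implicit in the definition of $S$ (``no big vertex within depth $\delta\log(1/\lambda)$''), which however is easily handled by a stochastic-domination argument showing that the conditional subtrees are dominated by the unconditional truncated-at-$1/(8\lambda^2)$ GW subtrees used above.
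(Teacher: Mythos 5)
Your decomposition into $E_1=\{|B|>k_0\}$ and $E_2=\{|S^+|>k_0\T^{1+\delta\log\mu}\}\cap\{|B|\le k_0\}$ and the tools you bring to bear are genuinely different from the paper's. The paper never sets up a branching process of big vertices: it first shows, via a ready-made stretched-exponential upper tail for the generation sizes of a power-law GW tree (cited from an external reference), that the union of $r$-balls around the first $k_0$ discovered big vertices has at most $k_0\T^{1+\delta\log\mu}$ vertices with superpolynomially small failure probability; then, among that many explored vertices, it bounds the number of big ones by a $\operatorname{Bin}(k_0\T^{1+\delta\log\mu},A\lambda^{2\tau-4})$ tail via Chernoff. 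Your route replaces both ingredients: a subcritical branching-process total-progeny bound for $|B|$, and an iterated-Rosenthal moment bound for $|S^+\setminus S|$. Your observation that $S^+\setminus S$ consists only of small vertices, so the $T_i$ are sizes of truncated GW subtrees, is a nice simplification that avoids the power-law tail altogether in the $E_2$ step. The paper's approach is shorter because the stretched-exponential ball tail does all the heavy lifting at once; yours is more self-contained but must rebuild the moment control by hand.

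There are, however, two gaps in your sketch. First, for $E_1$, the bound $\mathbb{P}(|B|\ge k)\le(Cm)^{k-1}$ does not follow from ``small offspring mean and bounded offspring'': the offspring of your branching process is a mixed binomial $\operatorname{Bin}(|\text{ball}|,p)$ whose mixing variable is the random ball size, which is \emph{not} bounded by any constant and in fact has a heavy (only stretched-exponential) upper tail. To get $\mathbb{P}(|B|\ge k)\lesssim m^{k-1}$ for fixed small $k$ one needs the factorial moments $\mathbb{E}[X(X-1)\cdots(X-j+1)]\lesssim m^j$ of the offspring, which in turn requires $\mathbb{E}[|\text{ball}|^j]\lesssim(\mathbb{E}|\text{ball}|)^j$ --- exactly the ball-size moment control that is the crux of $E_2$ and that the paper imports from the external lemma. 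So $E_1$ silently depends on the same ingredient as $E_2$ and cannot be dispatched with a one-line ``standard tail bound.'' Second, for $E_2$, the claimed uniform moment bound $\|T_i\|_p=O_p(\mu^{\delta\log(1/\lambda)})$ is false for $\tau\in(3,4)$ once $p$ is taken of order $1/(\delta\log\mu)$: in the single-generation Rosenthal recursion it is not the ``supercritical first term'' that dominates, but the raw $p$-th moment term $C_p\|X\wedge\frac{1}{8\lambda^2}\|_p\,\mu^{j/p}\sim\lambda^{-2+O(1/p)}$, which is polynomially larger than $\mu^r=\lambda^{-\delta\log\mu}$ for small $\delta$. Iterating, one gets $\|T_i\|_p\lesssim\lambda^{-2+O(1/p)}\mu^r$, not $\mu^r$. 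As it happens this weaker bound \emph{does} still suffice in the final Rosenthal--Markov step for $\sum T_i$ (the mean term $N\,\mathbb{E}T_i$ still dominates because $\tau>3$ kills the variance and raw-moment corrections against the $N\sim\T$ summands), but your diagnosis of which term dominates the iteration is wrong, and the argument only closes after this is corrected and the exponents are re-checked. A smaller point: the stochastic domination of the conditioned subtrees by the unconditional truncated GW is in the wrong direction --- conditioning on $\{D<\frac{1}{8\lambda^2}\}$ stochastically \emph{dominates} multiplying by $\mathbbm{1}_{\{D<\frac{1}{8\lambda^2}\}}$ --- though this is harmless since $\mathbb{P}(D\ge\frac{1}{8\lambda^2})\to0$.
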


We postpone the proof of this lemma to the end of the section, and continue with our main proof.
Taking $C>0$ arbitrary as in our main result, we can use Lemma~\ref{Small_Tree_S} to fix $\delta$ and $\rho$ such that we have a good environment with failure probability bounded by $\lambda^C$. From now on, we work conditionally on a good environment, and aim to upper bound the survival probability of the infection in that case. We will more precisely bound the probability of ever hitting a vertex of $\cC^+$ at maximal distance $\lceil \delta \log(1/\lambda)\rceil$ from $\cC$.

Heuristically, we follow the same guidelines as in Subsection~\ref{sec:small_truncation_threshold_in_2.5-3}, with the complication that the central big vertex of that section is replaced by the whole set $\cC$.
We denote by $\sT(\cC,\B)\geq \cC$ the infinite tree containing $\cC$ in which
\[
\begin{aligned}
\forall v \in \B \quad &\de(v)=\T,\\
\forall v \in V(\sT) \setminus \B \quad &\de(v) =  \left\lfloor \frac{1}{8\lambda^2}\right\rfloor.
\end{aligned}
\]
Write also $\partial \cC$ for the set of edges of $\sT(\cC,\B)$ at the boundary of $\cC$, which has
 \[ |\partial \cC|=|\cC|\left\lfloor \frac{1}{8\lambda^2}\right\rfloor-|\cC|+1+ |\B|\left(\T- \left\lfloor \frac{1}{8\lambda^2}\right\rfloor\right)\ge \frac {|\cC|}{10\lambda^2},\]
as well as
\begin{equation}\label{eq_nice_boundary}
|\partial \cC|\le |\B| \T + (|\cC|-|\B|) \frac 1 {8 \lambda^2}\le |\B|(\T+o(\T))
\leq  \frac{\delta \T}{300\rho}
=
\frac{\delta \log^2 \frac{1}{\lambda}}{300\lambda^2}.
\end{equation}

Similarly as in the proof of Lemma~\ref{lemma_expected_infectious_period_general}, we work on the tree $\sT(\cC,\B)$ and consider the exponentially weighted sum
\[
H_t
:=
\sum_{v \notin \cC} \xi_t(v) (2\lambda)^{\de(v,\cC)-1},
\]
which controls the infection outside of $\cC$, and %
has expected infinitesimal increase bounded by
\begin{equation}\label{eq_drift2}
	\frac{\E\left[ H_{t+\rm d t}- H_t | \cF_t \right]
	}{{\rm d}t}
	\leq
	\lambda |\partial \cC| \, \1_{\xi_t(\cC)>0}
	-\frac{1}{4}
	H_t.
\end{equation}
So the process $H_t$ is likely to quickly go to 0 when $\cC$ is healthy, and is likely to quickly go below $4\lambda |\partial \cC|$ (or close to this value) when $\cC$ is infected. The set $\cC$ is also infected at rate bounded by $\lambda H_t$, however we cannot simply argue that it recovers at rate 1, and so here is a limit of the similarity with Lemma~\ref{lemma_expected_infectious_period_general}. Instead, we also have an infection process on $\cC$
\[
U_t:=\sum_{v \in \cC} \xi_t(v),
\]
with expected infinitesimal increase bounded by
\begin{equation}\label{eq_drift_for_U}
	\frac{\E\left[ U_{t+\rm d t}- U_t | \cF_t \right]
	}{{\rm d}t}
	\leq
	\lambda H_t- U_t +\lambda |\cC|U_t
\end{equation}
using of course that the number of edges of the tree $\cC$ is $|\cC|-1\le |\cC|$. 
In our proof we have to study the dynamics of both processes $H$ and $U$, with the difficuly that the dynamics of any of these two processes depends on the other. In order to simplify the picture, we would like to get rid of the $H$-dependence of the dynamics of $U$. It is possible if we have an upper bound on $H$ which can serve as an upper bound for the dynamics of $U$, and to this end we will use the following lemma, which states that $H$ is likely to stay a long time below level $20\lambda |\partial \cC|$.

\begin{lemma}\label{lemma_H_bound}
	For the process $H$ defined on the tree $\sT(\cC,\B)$ with $\B$ nonempty, from any initial configuration $\xi_0$ with $H_0<15\lambda|\partial \cC|$, and for $\lambda$ sufficiently small, we find
	\[
	\mathbb{P}_{\xi_0} \left(
	\sup_{t\leq 7^{\lambda|\partial \cC|}}H_t > 20\lambda|\partial S|
	\right)
	\leq \exp\left(-\frac{1}{100\lambda} \right).
	\]
\end{lemma}

We postpone also the proof of this lemma to the end of the section, and continue with an adaptation of Lemma~\ref{lemma_expected_infectious_period_general} and Corollary~\ref{cor_infections_at_target} to our new settings.

\begin{lemma}\label{lemma_X_infectious_period}
	Assuming $\lambda$ is sufficiently small, on the tree $\sT(\cC,\B)$ we have
	\[
	\E_{\mathbbm{1}_{\cC}}\left[
	\int_0^\infty \mathbbm{1}_{\xi_t(\cC)>0} \, {\rm d}t
	\right]
	\leq
	3 e^{200\lambda^2|\partial \cC|}.
	\]
\end{lemma}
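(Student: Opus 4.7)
The plan is to mirror the cycle--iteration argument of Lemma~\ref{lemma_expected_infectious_period}, treating the subtree $S$ as the ``center'' in place of the single big vertex $o$. We will reuse the supermartingale seed $H_t=\sum_{v\notin S}\xi_t(v)(2\lambda)^{\de(v,S)-1}$ from Lemma~\ref{lemma_H_bound}; the standard branching--random--walk calculation outside $S$, combined with the boundary flux from $S$, yields the drift
\[
\frac{d}{dt}\E[H_t]\le \lambda|\partial S|\,\mathbbm{1}_{\xi_t(S)>0}-\frac{1}{4}H_t,
\]
so that $Y_t=e^{t/4}(H_t-4\lambda|\partial S|)$ is a supermartingale while $\xi_t(S)>0$ and $H_t$ is itself a supermartingale once $S$ is fully healed.

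We will then iterate the following cycle starting from any infected configuration: (a)~wait until $H_t\le 8\lambda|\partial S|$ via the iterated Markov argument of~\eqref{eq_hitting_time}--\eqref{eq_hitting_time2}, which has bounded expected duration given moderate $\E[H_0]$ (supplied by Lemma~\ref{lemma_H_bound}); (b)~wait for the first subsequent time $s$ with $\xi_s(S)=0$; (c)~test whether this recovery is permanent, and if not, restart. For step~(c), once $\xi_t(S)=0$ the source term of $H$ vanishes and $\E[H_{s+u}]\le 8\lambda|\partial S|\,e^{-u/4}$; the reinfection rate into $S$ is upper bounded by $\lambda H_t$ (distance-$1$ vertices carry weight $1$ in $H$), so Poisson thinning together with Jensen's inequality will give
\[
\P(\text{no further reinfection of }S\mid \cF_s)\ge \exp\left(-\lambda\int_0^\infty \E[H_{s+u}\mid \cF_s]\,du\right)\ge e^{-32\lambda^2|\partial S|}.
\]

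For step~(b), each excursion of $\xi_t(S)$ above zero should have $O(1)$ expected duration, because the contact process restricted to $S$ is deeply subcritical --- $\lambda|S|=O(\lambda\log(1/\lambda))=o(1)$ since $|S|\le|B|\lceil\delta\log(1/\lambda)\rceil$ is polylogarithmic --- and by Lemma~\ref{lemma_H_bound} the external reinfection rate is bounded by $20\lambda^2|\partial S|$ throughout windows of length $7^{\lambda|\partial S|}$. Summing excursions within a cycle will produce an expected on-time per cycle of $\exp(O(\lambda^2|\partial S|))$, and a geometric summation over the cycles (with per-cycle success probability at least $e^{-32\lambda^2|\partial S|}$) then yields
\[
\E_{\mathbbm{1}_S}\left[\int_0^\infty \mathbbm{1}_{\xi_t(S)>0}\,dt\right]\le 3\exp(200\lambda^2|\partial S|),
\]
with the constant $200$ providing ample slack for the various $O(\lambda^2|\partial S|)$ exponents accumulated along the way.

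The hard part will be step~(b): the external reinfection rate $O(\lambda^2|\partial S|)=O(\log^2(1/\lambda))$ does not vanish as $\lambda\to 0$, so bounding the expected on-time per cycle requires cleanly separating the internal excursions (each of $O(1)$ expected length by subcriticality of the contact process on $S$) from the external feed (controlled via Lemma~\ref{lemma_H_bound}), rather than relying on a single monolithic supermartingale. This separation-of-scales between a small interior and a controllable exterior is what ultimately produces the stretched-exponential bound in $\lambda^2|\partial S|$.
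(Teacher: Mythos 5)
Your high-level cycle decomposition — bring $H$ down, wait for $S$ to heal, test for permanence — is exactly the paper's structure, and your treatment of the permanence probability via Poisson thinning plus Jensen is a valid (in fact slightly sharper, $e^{-32\lambda^2|\partial S|}$ vs.\ the paper's $\tfrac{1}{2}e^{-160\lambda^2|\partial S|}$) variant of the paper's stopping-time/Markov argument, modulo being careful that the ``integral of $H$'' used in the Jensen step is the one in the coupled process where $S$ is forcibly held healthy (so that the stopping time $\tau_S$ does not enter circularly).

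The genuine gap is step~(b): you assert that ``each excursion of $\xi_t(S)$ above zero has $O(1)$ expected duration'' and that summing excursions ``will produce an expected on-time per cycle of $\exp(O(\lambda^2|\partial S|))$,'' but you do not identify the mechanism that produces the exponential. The external immigration rate $\approx 20\lambda^2|\partial S|$ is of order $\log^2(1/\lambda)$, and the process $U_t=\sum_{v\in S}\xi_t(v)$ is effectively a birth--death chain with \emph{constant} immigration and per-site recovery; such a chain equilibrates near level $\Theta(\lambda^2|\partial S|)$ and the wait for $U$ to first hit $0$ is a genuine large-deviation event, of expected length $\exp(\Theta(\lambda^2|\partial S|))$. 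Decomposing this wait into ``$O(1)$ excursions'' and counting them geometrically requires controlling the reinfection probability at each down-crossing of $U$, which is precisely the quantity you have not yet computed and which depends on the coupled history of $H$. The paper instead dominates $U$ by an explicit birth--death chain on $\{0,\ldots,|S|\}$ with birth rate $\leq 40\lambda^2|\partial S|$ and death rate $k$ at state $k$, computes the stationary measure by detailed balance, and bounds the hitting time of $0$ by the path resistance, giving the clean $e^{40\lambda^2|\partial S|}$ that feeds into the final bound. You will need some such quantitative argument — your ``separation of scales'' remark diagnoses the difficulty correctly, but a heuristic excursion count does not close it.

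Two smaller points: the paper also must handle the event that $H_t$ exceeds $20\lambda|\partial S|$ \emph{while} $S$ is still infected (which invalidates the drift bound on $U$); it does so by noting that just after the exceedance $H\leq 21\lambda|\partial S|$, reapplying the hitting-time estimate~\eqref{eq_hitting_time2}, and restarting the cycle. Your sketch does not mention this case, and it cannot be ignored — the failure probability of Lemma~\ref{lemma_H_bound} is only $e^{-1/100\lambda}$ per window, so the event does occur over the exponentially many windows needed. Finally, the paper flips an auxiliary coin at each recovery of $S$ so that the per-cycle termination probability is \emph{exactly} $\tfrac{1}{2}e^{-160\lambda^2|\partial S|}$, which makes the geometric summation exact rather than an inequality on a stopping-time decomposition with history-dependent parameters; you may or may not need this trick depending on how you organize the iteration, but it is worth noting as the device that keeps the bookkeeping clean.
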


\begin{proof}
	We introduce the sequences of stopping times $(\tau_k)_{k\ge 1}$, $(R_k)_{k\ge 1}$, $(S_k)_{k\ge 1}$ and $(T_k)_{k\ge 0}$ defined recursively by $T_0=0$ and for $k\ge 1:$
	\begin{align*}
		\tau_k&:=\inf\{t\ge T_{k-1}, H_t\ge 20\lambda|\partial \cC|\},\\
		R_k&:=\inf\{t\ge T_{k-1}, \xi_t(\cC)=0\} \wedge \tau_k,\\
		S_k&:= 	\begin{cases}
				R_k & \text{if } R_k=\tau_k\\
				\inf\{t\ge  R_k, \xi_t(\cC)>0\} & \text{if }R_k<\tau_k,
				\end{cases}\\
		T_k&:=\inf\{t\ge S_k, H_t\le 15 \lambda |\partial \cC|\}.
	\end{align*}
	In words, starting from a time $t=T_{k-1}$ with $H_t\le 15 \lambda |\partial \cC|$, we wait until either $H$ goes above $20 \lambda |\partial \cC|$ or $\cC$ recovers. If we first observe $H$ going above the threshold, we just wait until it goes down below $15\lambda |\partial \cC|$ again (at time $T_k$). If on the contrary we first have recovery of $\cC$, we first check whether it is a permanent recovery (in which case we have $S_k=T_k=+\infty$) and if not, after the reinfection of $\cC$ we still wait until $H$ goes down below $15\lambda |\partial \cC|$ at time $T_k$ to repeat the procedure. We claim that:
	\begin{enumerate}
		\item Conditionally on $T_{k-1}$ being finite, we have the upper bound
		\[
		\E\left[(R_k-T_{k-1})+(T_k-S_k) \Big| T_{k-1}<\infty\right]\le 15+e^{40\lambda^2 |\partial \cC|}
		\]
		for the average time $\cC$ is infected on $[T_{k-1},T_k]$.
		\item Conditionally on $T_{k-1}$ being finite, we have a probability at least $\frac 2 5 e^{-160\lambda^2 |\partial \cC|}$   of having $T_k$ infinite, actually by observing a permanent recovery of $\cC$, leading to $S_k$ being infinite.
	\end{enumerate}
	Combining the two claims provides the upper bound
	\[
	\E\left[
	\int_0^\infty \mathbbm{1}_{\xi_t(\cC)>0} {\rm d}t
	\Bigg|
	\xi_0=\mathbbm{1}_{\cC}
	\right]
	\leq
	\left(15+e^{40\lambda^2|\partial \cC|}\right)
	\left(
	\frac 5 2 e^{160\lambda^2|\partial \cC|}
	\right),
	\]
	and thus the result for small $\lambda$.
	
	To check the claim in $(i)$, we repeat the supermartingale argument in Lemma~\ref{lemma_expected_infectious_period_general}, but now with the process $11\log(1+H_t/15\lambda |\partial \cC|)+\nicefrac{t}{11}$, which has nonnegative infinitesimal increase when $H$ is above $15\lambda |\partial \cC|$, to deduce
	\[
	\E\left[T_k-S_k  \Big| T_{k-1}<\infty\right]\le 11\log\left(1+\frac {\E[H_{S_k}|T_{k-1}<\infty]}{15\lambda|\partial \cC|}\right).
	\]
	Still working implicitly conditionally on $\{T_{k-1}<+\infty\}$, we bound the inner expectation by
	\[
	\begin{array}{rlclcl}
	\E[H_{S_k}]&\le& \E[H_{S_k}\1_{R_k=\tau_k}]&+&\E[H_{S_k}\1_{R_k<\tau_k}]&\\
	&\le &(1+20\lambda|\partial \cC|)&+&20\lambda |\partial \cC|&\le 41\lambda |\partial \cC|,
	\end{array}
	\]
	where in the second line we used that the jumps of $H$ are bounded by one to upper bound the process by $1+20\lambda|\partial \cC|$ at time $\tau_k$, and in the second term the supermartingale property of the process $H$ when $\cC$ is healthy which gives a nonpositive expected increment on the time interval $[R_k,S_k]$. Thus
	\[
	\E\left[T_k-S_k  \Big| T_{k-1}<\infty\right]\le 11\log(1+\nicefrac {41}{15})\le 15.
	\]
	In order to bound $\E[R_k-T_{k-1}|T_{k-1}<+\infty]$, it suffices to bound the recovery time of $\cC$, or equivalently the hitting time of 0 for the process $U$, under the hypothesis that $H$ stays bounded by $20\lambda|\partial \cC|$.  Recalling~\eqref{eq_drift_for_U}, we can then bound its infinitesimal increase by $20\lambda^2 |\partial \cC|-U_t/2$, using that for small $\lambda$ we have  $2\lambda |\cC|\le \frac {2 \delta}{400\rho}\lambda \lceil \delta \log(1/\lambda)\rceil \le 1$. Further though, this upper bound is just a birth-death chain on $\{0\}\cup[|\cC|]$ which by detailed balance has a stationary distribution $\pi$ with
	\[
	\forall k \in [|\cC|]
	\quad
	40\lambda^2|\partial \cC|
	\pi(k-1)
	=
	k\pi(k)
	\implies
	\frac{\pi(k)}{\pi(0)}
	=
	\frac{\left( 40\lambda^2|\partial \cC| \right)^k}{k!}
	\]
	so the edge $\{k-1,k\}$ has conductance (or ergodic flow rate, see \cite[Section 3.3]{aldous-fill-2014} for details)
	\[
	\pi(0)\frac{\left( 40\lambda^2|\partial \cC| \right)^k}{2(k-1)!}
	\]
	and we bound the hitting time of $0$ by the resistance (inverse conductance) of the path
	\[
	\begin{aligned}
	\frac{1}{\pi(0)}\sum_{k=1}^{|\cC|}\frac{2(k-1)!}{\left( 40\lambda^2|\partial \cC| \right)^k}
	&=
	\sum_{k=1}^{|\cC|}\frac{2(k-1)!}{\left( 40\lambda^2|\partial \cC| \right)^k}
	\sum_{j=1}^{|\cC|}\frac{\left( 40\lambda^2|\partial \cC| \right)^j}{j!}\\
	&\leq
	|\cC|
	\frac{2(|\cC|-1)!}{
		\left( 40\lambda^2|\partial \cC| \right)^{|\cC|}
	}
	\sum_{j=1}^{\infty}\frac{
		\left( 40\lambda^2|\partial \cC| \right)^j}{j!}
	\leq
	\frac{2(|\cC|)! e^{40\lambda^2|\partial \cC|}}{
		\left( 40\lambda^2|\partial \cC| \right)^{|\cC|}
	}.
	\end{aligned}
	\]
	
	Then, use that $|\partial \cC| \geq \frac{|\cC|}{10\lambda^2}$ and $n! \leq \frac{n^{n+1}}{e^{n-1}}$ to bound this hitting time by
	\[
	\frac{2(|\cC|)! e^{40\lambda^2|\partial \cC|}}{
		\left( 40\lambda^2|\partial \cC| \right)^{|\cC|}
	}
	\leq
	\frac{2(|\cC|)! e^{40\lambda^2|\partial \cC|}}{
		\left( 4|\cC| \right)^{|\cC|}
	}
	\leq
	2|\cC|e
	{(4e)}^{-|\cC|}
	e^{40\lambda^2|\partial \cC|}
	\leq
	e^{40\lambda^2|\partial \cC|},
	\]
	and we have shown the claim in $(i)$.
	
	For the second point, we first note that we have $R_k<\tau_k$ with probability at least $4/5$. %
	Indeed, by Lemma~\ref{lemma_H_bound} and Markov's inequality, we will have hitting of $U=0$ before time $7^{\lambda |\partial \cC|}$, and hitting of $H>20\lambda|\partial \cC|$ after time  $7^{\lambda |\partial \cC|}$, with probability at least
	\[
	1-\exp\left(-\frac{1}{100\lambda} \right)-\exp\left( \left( 40 \lambda - \log 7 \right)\lambda |\partial \cC| \right)
	\geq 1-\exp\left(-\frac{1}{5\lambda} \right)\ge \frac 4 5.
	\]
	
	Now, conditionally on $R_k$ being finite and smaller than $\tau_k$, we can condition on $\xi_{R_k}$ (a configuration which has in particular $H_{R_k}\le 20\lambda |\partial \cC|$), and observe that $(H_{t\wedge S_k} e^{(t\wedge S_k)/4})_{t\geq R_k}$ is a supermartingale to deduce
	\[
	\mathbb{P}\left(
	\int_{R_k}^{S_k} H_t>160\lambda|\partial \cC|\, \Big|\, \xi_{R_k}
	\right)
	\leq
	\frac{\E\left[
		\int_{R_k}^{S_k} H_t {\rm d}t \, \Big|\, \xi_{R_k}
		\right]}{160\lambda|\partial \cC|}
	\leq
	\frac{20\lambda|\partial \cC|
		\int_{0}^\infty e^{-t/4} {\rm d}t
	}{160\lambda|\partial \cC|}
	=\frac 12 .
	\]
	If this limit is not exceeded, the total rate of reinfection of $\cC$ is then bounded by $\lambda \cdot 160\lambda|\partial \cC|$, and so we have found that the recovery of $\cC$ is permanent with probability at least $\frac{1}{2}e^{-160\lambda^2|\partial \cC|}
	$, which allows to conclude the claim in $(ii)$, and thus our proof.
\end{proof}	

\begin{corollary}\label{corollary_escaping_X}
	On the tree $\sT(\cC,\B)$, generalise our CP-BRW notation \eqref{equation_branching_sum} so that $m(\cC,v)$ denotes the mean absorption at $v$ with the good environment $\cC$ initially fully infected. When the target sink $v$ is at distance $\lceil \delta \log(1/\lambda)\rceil$ from $\cC$, we find
	\[
	m(\cC,v) \le e^{-\frac{1}{4}\delta\log^2 \frac{1}{\lambda}}.
	\]
\end{corollary}

\begin{proof}
	Given the larger distance which is here necessary to prevent local divergence of the CP-BRW, we just repeat the proof of Corollary~\ref{cor_infections_at_target} to see that the expected number of infections absorbed at $v$ has
	\begin{align*}
	m(\cC,v)  \le 3 e^{200\lambda^2 |\partial \cC|} (2\lambda)^{\lceil \delta \log(1/\lambda)\rceil-1}.
	\end{align*}
	Now recalling from \eqref{eq_nice_boundary} the bound $|\partial \cC|\le \frac{\delta \log^2 \frac{1}{\lambda}}{300\lambda^2}$ (because we assumed for this corollary that the environment is good), we have:
	\[
	3 e^{200\lambda^2|\partial \cC|}(2\lambda)^{\lceil \delta \log(1/\lambda)\rceil-1}
	\leq
	3 e^{- \frac{1}{3}\delta\log^2 \frac{1}{\lambda}+ (1+\delta \log 2) \log \frac{1}{\lambda}} \le e^{-\frac{1}{4}\delta\log^2 \frac{1}{\lambda}}.
	\]
\end{proof}

By Corollary~\ref{corollary_escaping_X}, on a good environment the probability that the infection ever reaches any given vertex $v$ of $\cC^+$ at maximal distance $\lceil \delta \log(1/\lambda)\rceil$ from $\cC$ is bounded by $e^{-\frac{1}{4}\delta\log^2 \frac{1}{\lambda}}$. Recalling that this good environment has $|\cC^+|\le \frac{\delta}{400\rho} \T^{1+\delta \log \mu}$, by the union bound the survival probability of the infection on this good environment is bounded by
\[
\left\lfloor\frac{\delta}{400\rho} \T^{1+\delta \log \mu}\right\rfloor e^{-\frac{1}{4}\delta\log^2 \frac{1}{\lambda}}
\]
This quantity is smaller than any polynomial in $\lambda$ as $\lambda\to 0$, thus yielding the result.

\begin{proof}[Proof of Lemma~\ref{Small_Tree_S}]
	First, by the monotonicity of the Galton-Watson tree it suffices to show the result on the tree with maximal root degree $\de(o)=\T-1$. We then explore the ball around this root of radius $\left\lceil \delta \log \nicefrac{1}{\lambda} \right\rceil$ for other big vertices (with degree at least $\frac{1}{8\lambda^2}$), as well as the balls around paths between big vertices.
	
	Continue iteratively to explore the ball of radius $\left\lceil \delta \log \nicefrac{1}{\lambda} \right\rceil$ around every big vertex until this construction terminates and we have explored the whole $\cC$, revealing the big vertices and paths between them of length at most $\left\lceil \delta \log \nicefrac{1}{\lambda} \right\rceil$.
	
	To obtain the desired bounds on $|\B|$ and $|\cC^+|$, we combine two ingredients:
	\begin{itemize}
		\item We first control the size of the ball of radius $\left\lceil \delta \log \nicefrac{1}{\lambda} \right\rceil$ around a given big vertex. 
Using \cite[Theorem 1.1]{strayappendix} with 
		\[n=\left\lceil \delta \log \frac{1}{\lambda} \right\rceil
		\qquad
		\text{and}
		\qquad		 
		x=\lambda^{-2- \frac{1}{2} \delta \log \mu},
		\]
		we obtain that the probability that the boundary of a ball with root degree $\lfloor 1/8\lambda^2 \rfloor$ contains more than
		\[\T^{1+\delta \log \mu + \frac{1}{2}\delta \log \mu }\]
		vertices decays like a \emph{stretched exponential} in $\lambda$. A big vertex and the path which discovered it produce together at most \[
		\left\lceil \delta \log \frac{1}{\lambda} \right\rceil + \frac{\T}{\lfloor 1/8\lambda^2 \rfloor}
		\leq 9\rho \log^2\left( \nicefrac{1}{\lambda} \right)
		\] such balls and hence by the union bound, a total contribution to $\cC^+$ smaller than
		\[ 9\rho \log^2\left( \nicefrac{1}{\lambda} \right)
		\T^{1+\frac{3}{4}\delta \log \mu }
		\leq 
\T^{1+\frac{4}{5}\delta \log \mu }	
		\]
		with a probability that's also stretched exponential in $\lambda$, and so in particular a probability that is smaller than any polynomial in $\lambda$.
		
		By the union bound again, we get that with a probability which is still smaller than any polynomial in $\lambda$, the union of the balls of radius  $\left\lceil \delta \log \frac{1}{\lambda} \right\rceil$ around the first $\left\lfloor \frac \delta {400 \rho} \right\rfloor$ explored big vertices and paths between them contain less than $\left\lfloor \frac \delta {400 \rho} \T^{1+\delta \log \mu} \right\rfloor $ vertices.
		\item It remains to bound  $|\B|$ by $\frac \delta {400 \rho}$. To this end, it suffices to check that above $\left\lfloor \frac \delta {400 \rho} \T^{1+\delta \log \mu} \right\rfloor$ explored vertices, it is likely to find less than $\frac \delta {400 \rho}$ big vertices. Note that for small enough $\lambda$ each explored vertex is big with probability
		\[
		\mathbb{P}\left(D \geq \frac{1}{8\lambda^2}\right) < \mathbb{P}\left(X \geq \frac{1}{8\lambda^2}\right) < \cq \lambda^{2\tau-4}
		\]
		and hence in $\left\lfloor \frac \delta {400 \rho} \T^{1+\delta \log \mu} \right\rfloor$ explored vertices we find fewer than $\frac{\delta}{400\rho}$ large vertices with probability, using the usual Chernoff bound, bounded by
		\[
		\mathbb{P}\left(\operatorname{Bin}\left( \left\lfloor \frac \delta {400 \rho} \T^{1+\delta \log \mu} \right\rfloor, \cq \lambda^{2\tau-4}\right)\geq \frac{\delta}{400\rho} \right)
		\]
		\[\begin{split}
			&\leq
			\mathbb{P}\left(\operatorname{Pois}\left(  \frac{\delta}{400\rho}\rho^{1+\delta \log \mu} \cq \lambda^{2\tau-6-2\delta \log \mu}\log^{2+2\delta \log \mu} \frac{1}{\lambda}\right)\geq \frac{\delta}{400\rho}\right)\\
			&\leq
			\lambda^{ \frac{\varphi \delta}{400\rho}}
			\exp\left(
			\frac{\delta}{400\rho}\rho^{1+\delta \log \mu} \cq \lambda^{2\tau-6-2\delta \log \mu-\varphi}\log^{2+2\delta \log \mu} \frac{1}{\lambda}
			\right)
			\sim \lambda^{ \frac{\varphi\delta}{400\rho}}\leq \lambda^C
		\end{split}
		\]
		where we have set parameter
		$
		\varphi ={400 \rho C}/{\delta}
		$
		so that by taking $\delta$ small and then $\rho$ small enough we guarantee $2\tau-6-2\delta \log \mu-\varphi>0$.
	\end{itemize}
\end{proof}

\begin{proof}[Proof of Lemma~\ref{lemma_H_bound}]
	$
	M_t:=
	\left( {3}/{2} \right)^{H_t}
	$ 
	will be a supermartingale while $H_t$ is sufficiently large. Indeed, write its expected infinitesimal increase as
	\begin{equation}\label{eq_trivial_generators}
		\begin{split}
			\frac {\E\left[M_{t+\rm d t}-M_t \Big| \cF_t\right]}{\rm d t}
			&=
			\lambda |\partial \cC|\left(
			\left( \frac{3}{2} \right)^{H_t+1}-\left( \frac{3}{2} \right)^{H_t}
			\right)
			+
			\sum_{v \in \xi_t}\cL(v)
			\\
			&=
			\frac{\lambda |\partial \cC|}{2} \left( \frac{3}{2} \right)^{H_t} 
			+
			\sum_{v \in \xi_t}\cL(v), 
		\end{split}
	\end{equation}
	where again $\cF_t$ is the $\sigma$-algebra generated by $(H_s)_{s\leq t}$, and $\cL(v)$ is the contribution to the expected infinitesimal increase due to infections and recoveries by $v$. 
	
	First we consider infected vertices $v$ at distance $\de(v,\cC)\geq 3$ and so with weight $w=(2\lambda)^{\de(v,\cC)-1}$, 
	\[
	\begin{split}
		\cL(v)&\leq
		\left( \frac{3}{2} \right)^{H_t-w}\left(
		\frac{\lambda}{8\lambda^2}\left(\left( \frac{3}{2} \right)^{w (1+2\lambda)}-\left( \frac{3}{2} \right)^{w }\right)
		+
		\lambda\left(\left( \frac{3}{2} \right)^{w (1+\frac{1}{2\lambda})}-\left( \frac{3}{2} \right)^{w }\right)
		+1-\left( \frac{3}{2} \right)^{w }
		\right)\\
		&\sim \left( \frac{3}{2} \right)^{ H_t}
		\left(
		\frac{1}{8\lambda} \cdot  2\lambda w \log \frac{3}{2}
		+ \lambda \cdot \frac{w}{2\lambda} \log \frac{3}{2}
		- w \log \frac{3}{2} \right)
		= w \left( \frac{3}{2} \right)^{ H_t}
		\left(
		- \frac{1}{4} \log \frac{3}{2}
		\right)
	\end{split}
	\]
	where we were able to apply $e^x \sim 1+x$ to simplify every term in the limit $\lambda \downarrow 0$. 
	We have a different calculation when $\de(v,\cC)=2$, in which case
	\[
	\begin{split}
		\cL(v)
		&\leq (1+o(1)) \left( \frac{3}{2} \right)^{ H_t} \left(
		\frac{1}{4}  w \log \frac{3}{2}
		+ \lambda \left( \left( \frac{3}{2} \right)^{ \frac{w}{2\lambda} }-1 \right)
		- w \log \frac{3}{2}
		\right)\\
		&= w \left( \frac{3}{2} \right)^{ H_t}
		\left(-\frac{3}{4}   \log \frac{3}{2}
		+ \frac{1}{4} \right),
	\end{split}
	\]
	and in the final case $\de(v,\cC)=1$ infections of parent in $\cC$ do not count for $H_t$ and so 
	\[
	\begin{split}
		\cL(v)
		\leq
		\frac{1}{8\lambda}\left(
		\left( \frac{3}{2} \right)^{H_t + 2\lambda}
		-\left( \frac{3}{2} \right)^{ H_t}
		\right)
		+\left( \frac{3}{2} \right)^{H_t-1}-\left( \frac{3}{2} \right)^{H_t}
		\sim  w  \left( \frac{3}{2} \right)^{H_t}
		\left(
		\frac{1}{4}\log \frac{3}{2} - \frac{1}{3}
		\right).
	\end{split}
	\]
	
	By construction all of these coefficients are negative
	\[
	\frac{1}{4}\log \frac{3}{2} - \frac{1}{3}
	<
	- \frac{1}{4} \log \frac{3}{2}
	<
	-\frac{3}{4}   \log \frac{3}{2}
	+ \frac{1}{4}
	<-0.054
	<-\frac{1}{20}.
	\]
	Recalling \eqref{eq_trivial_generators}, we have that the expected infinitesimal increase of $M$ is bounded by
	\[
	\left( \frac{3}{2} \right)^{ H_t}
	\left(
	\frac{\lambda |\partial \cC|}{2}
	- \frac{1}{20} H_t 
	\right)
	\]
	and thus negative as long as $H_t> 10\lambda |\partial \cC|$. Define
	\begin{align*}
		\tau_0&:=\inf \left\{ t\ge 0 : H_t \geq 15\lambda|\partial \cC| \right\},\\
		\tau_D&:=\inf \left\{ t>\tau_0 : H_t < 10\lambda|\partial \cC| \right\},\\
		\tau_U&:=\inf \left\{ t>\tau_0 : H_t > 20\lambda|\partial \cC| \right\}.
	\end{align*}
	On $[\tau_0,\tau_D]$, the expected infinitesimal increase of $M$ is negative, and we have $H_{\tau_0}\le 15 \lambda |\partial \cC| + 1$ as well as $H_{\tau_D}\ge 10\lambda|\partial \cC|-1$, so we can apply optional stopping to deduce
	\begin{equation}\label{eq_geometric_probability}
		\gamma :=
		\mathbb{P}\left(\tau_U<\tau_D\right)
		\leq
		\frac{\left( \nicefrac{3}{2} \right)^{15\lambda|\partial \cC|+1}-\left( \nicefrac{3}{2} \right)^{10\lambda|\partial \cC|-1}}{\left( \nicefrac{3}{2} \right)^{20\lambda|\partial \cC|}-\left( \nicefrac{3}{2} \right)^{10\lambda|\partial \cC|}}
		\sim
		\left( \frac{3}{2} \right)^{-5\lambda|\partial \cC|+1}.
	\end{equation}

	We will thus typically observe many downward exits from above $15\lambda|\partial \cC|$ to below $10\lambda|\partial \cC|$ before $H_t$ can exceed $20\lambda|\partial \cC|$, and it remains to lower bound the time which each downward exit takes.
	From a particular infection state, the contact process dominates the contact process with no infection allowed and only recovery.	
	Hence, introducing a sequence $(E_i)_i$ of i.i.d. exponentials of mean $1$, we have the following stochastic lower bound
	\[
	H_{\tau_0+t}
	\succeq
	Z_t:=
	\sum_{v \in \xi_{\tau_0}}
	w(v)
	\mathbbm{1}_{E_i>t}.
	\]
	Setting $t=\log \frac{3}{2}$ we have $\mathbb{P}(E_i>t)=\frac{2}{3}$
	and so we can couple three copies of the exponential $E_i, \tilde{E}_i, \tilde{\tilde{E}}_i$ such that almost surely
	\[
	\mathbbm{1}_{E_i>t}+
	\mathbbm{1}_{\tilde{E}_i>t}+
	\mathbbm{1}_{\tilde{\tilde{E}}_i>t}=2.
	\]
	
	In this way, we can construct identically distributed and coupled $Z_t, \tilde{Z}_t, \tilde{\tilde{Z}}_t$ having almost surely
	\[
	Z_t+\tilde{Z}_t+\tilde{\tilde{Z}}_t
	=2\sum_{v \in \xi_{\tau_0}}w(v)
	\geq 30\lambda|\partial \cC|.
	\]
	In particular, the sum of probabilities $\mathbb{P}(Z_t \geq 10\lambda|\partial \cC|)+\mathbb{P}(\tilde Z_t \geq 10\lambda|\partial \cC|)+\mathbb{P}(\tilde {\tilde{Z}}_t \geq 10\lambda|\partial \cC|)$ exceeds one, but as these probabilities are equal, we deduce $\mathbb{P}(Z_t \geq 10\lambda|\partial \cC|)\geq 1/3$.
	
	All in all, we have a number of downward exits before $H_t$ can exceed $20 \lambda |\partial \cC|$ which is at least a Geometric of mean $-1+1/\gamma\gtrsim \left( {3}/{2} \right)^{5\lambda|\partial \cC|-1}$. First note, by Markov's inequality,
	\[
	\mathbb{P}\left(
	\operatorname{Geom}\left( \gamma
	\right)\leq k
	\right)
	\leq
	\mathbb{P}\left(
	\operatorname{Bin}\left(k+1, \gamma
	\right)\geq 1
	\right)
	\leq
	\gamma (k+1)
	.
	\]
	
	Of these downward exits at least binomially $1/3$ of them take time at least $\log \frac{3}{2}$, and it follows from the standard Chernoff bound $\mathbb{P}\left(
	\operatorname{Bin}\left(
	n,p
	\right)\leq np/2\right) \leq e^{-np/8}$
	that
	\[
	\begin{split}
		\mathbb{P}\left(
		\tau_U-\tau_0<7^{\lambda|\partial \cC|}
		\right)
		&\leq
		\mathbb{P}\left(
		\operatorname{Geom}\left( \gamma
		\right)\leq \frac{7^{1+\lambda|\partial \cC|}}{\log \frac{3}{2}}
		\right)
		+
		\mathbb{P}\left(
		\operatorname{Bin}\left(\left\lfloor
		\frac{7^{1+\lambda|\partial \cC|}}{\log \frac{3}{2}}\right\rfloor,
		\frac{1}{3}
		\right)
		\leq
		\frac{7^{\lambda|\partial \cC|}}{\log \frac{3}{2}}
		\right)\\
		&\leq
		\gamma \frac{7^{\lambda|\partial \cC|}}{\frac{2}{3}\log \frac{3}{2}}
		+\gamma
		+
		\exp\left(
		-\frac{7^{1+\lambda|\partial \cC|}}{24\log \frac{3}{2}}
		\right)\\
		&\leq \exp\left(-\frac{1}{100\lambda} \right)
	\end{split}
	\]
	where the final line follows from recalling the $\gamma$ asymptotic in \eqref{eq_geometric_probability}, that $\lambda |\partial \cC|\geq \frac{1}{8\lambda^2}$  and that $\frac{1}{8}\log \left( 7 \cdot (3/2)^{-5} \right) <-\frac{1}{100}$.
\end{proof}

\vspace{2em}

{\bf Acknowledgements.}
JF was supported by the \emph{Unit\'e de math\'ematiques pures et appliqu\'es} of ENS Lyon, and then by NRDI grant KKP 137490.

\printbibliography

\begin{appendices}
\noappendicestocpagenum

\section{A lemma on the contact process on star graphs}\label{appendix}

In this section we revisit slightly the proof of \cite[Lemma~3.4]{nguyen2022subcritical} to obtain the following refinement on the probability that the contact process, starting from only the center of a star graph infected, survives for an exponentially long time in $\lambda^2$ times the number of vertices in the star graph.

\begin{lemma}\label{lem:star_survival_center}
	Consider the contact process $(\xi_t)$ with infection rate $\lambda<1$ on the star graph $S_k$. Assume that $\rho$ is infected at time 0, and write $T=\int_0^{\infty} \1_{\rho \in \xi_t} \de t$ the total time the center is infected before the extinction of the epidemic on $S_k$. Then 
	\[
	\p\left(T\le e^{\eps \lambda^2 k}\right)\leq \frac{c}{\lambda^2 k},
	\]  for $\eps>0$ and $c>0$ universal constants (i.e independent of $\lambda<1$ and $k$).
\end{lemma}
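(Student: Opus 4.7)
The plan is to lower-bound $\mathbb{P}(T \le e^{\varepsilon \lambda^2 k})$ by showing that with probability at least $c/(\lambda^2 k)$, the epidemic absorbs before ever reaching its metastable regime, and that on this event the absorption time is at most polynomial in $1/\lambda$ and $k$, hence $o(e^{\varepsilon \lambda^2 k})$ as $\lambda^2 k \to \infty$. We may assume $\lambda^2 k$ is large, as otherwise $c/(\lambda^2 k) > 1$ and the statement is vacuous for any sufficiently small universal $c$.

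Let $N_t$ denote the number of infected leaves, so that $(\xi_t(\rho), N_t)$ is a Markov chain on $\{I, H\} \times \{0, \ldots, k\}$ with unique absorbing state $(H, 0)$, and $T$ equals the total sojourn time in $\{I\} \times \{0, \ldots, k\}$. The heuristic underlying the bound is a time-averaging argument: at a level $n$ with $1 \le n \ll \min(k, 1/\lambda)$, the center is infected a fraction $\frac{\lambda n}{1+\lambda n} \approx \lambda n$ of the time, so the effective birth rate of $N_t$ is $\lambda(k-n) \cdot \lambda n \sim \lambda^2 n k$ while the death rate remains $n$, giving a uniformly small ratio $d_n/b_n \sim 1/(\lambda^2 k)$. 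The classical hitting-probability formula for such a biased birth-death chain then predicts that starting from $N = 1$, it returns to $0$ before escaping to a level $M \gg 1$ with probability $\asymp 1/(\lambda^2 k)$, which is the source of the lemma's bound.

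To make this rigorous, fix $M := \lceil c_0 / \lambda \rceil$ with a small universal $c_0$, so that $M$ is far below the metastable level $\sim \lambda k$ yet large enough for the bias to be steady, and let $E$ be the event that the chain started at $(I, 0)$ absorbs at $(H, 0)$ before $N_t$ first reaches $M$. The function $f(s) := \mathbb{P}[E \mid X_0 = s]$ satisfies the harmonic system of the generator with boundary values $f(H, 0) = 1$ and $f \equiv 0$ on $\{N = M\}$. Writing out these equations at the states $(I, n)$ and $(H, n)$ gives a coupled second-order linear recurrence in $n$; to show $f(I, 0) \ge c/(\lambda^2 k)$ without the logarithmic loss present in \cite[Lemma~3.4]{nguyen2022subcritical}, one can either solve the recurrence via an explicit ansatz combining $(1+\lambda)^{-n}$ and $(\lambda^2 k)^{-n}$ basis functions, construct an explicit subharmonic function bounding $f$ from below, or couple monotonically with the pure birth-death chain of rates $b_n = \lambda^2 n k,\ d_n = n$, for which the closed-form birth-death hitting probability applies directly.

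Finally, on the event $E$ the trajectory never leaves the finite state space $\{I, H\} \times \{0, \ldots, M-1\}$; a routine argument (bounding the conditional hitting time by expected commute times in this $2M$-state chain, or showing each excursion in the region has positive probability of ending at absorption in bounded time) gives that conditionally on $E$ the absorption time has exponentially-decaying tails on a time-scale polynomial in $M$ and $k/\lambda$. In particular, with conditional probability at least $\tfrac12$, absorption happens within that polynomial, which is $o(e^{\varepsilon \lambda^2 k})$. Combined with $\mathbb{P}(E) \ge c/(\lambda^2 k)$ this yields $\mathbb{P}(T \le e^{\varepsilon \lambda^2 k}) \ge c'/(\lambda^2 k)$. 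The main obstacle is the first of these steps: the original proof splits the center-flip and leaf-flip timescales and loses a factor of $\log(\lambda^2 k)$ in a union bound over successive attempts to die, whereas the refinement works with the joint chain directly and so captures the effective $\lambda^2 k$ bias without any logarithmic loss.
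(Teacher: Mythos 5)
The lemma statement contains a typo: ``bounded below'' should read ``bounded above.'' This is evident from the Remark that follows (which explicitly states ``the probability that the contact process fails to survive for an exponentially long time\dots\ is bounded above by $c/\lambda^2 k$''), from the usage in Section~5.1 (where the lemma is invoked to guarantee local survival of the infection at a big vertex with probability at least $1-\eps c/\log(1/\lambda)$), and from basic sanity: a claim that $\mathbb{P}(T\le e^{\eps\lambda^2k})\ge c/\lambda^2 k$ for a universal $c>0$ independent of $\lambda$ and $k$ is simply false whenever $\lambda^2 k < c$, since a probability cannot exceed $1$. Your observation that ``the statement is vacuous for any sufficiently small universal $c$'' in that regime is not correct --- a false inequality is not a vacuous one --- and should have flagged that your reading of the direction of the bound was off.

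Consequently, your proposal proves the opposite of what the lemma asserts. You establish (or sketch) that early extinction happens with probability \emph{at least} of order $1/\lambda^2 k$, by a hitting-probability argument for a biased birth--death chain. That lower bound is true, and it appears in the Remark after the lemma with a much shorter proof: on the event that the first recovery time $R$ of the root is at most $1/(2\lambda^2 k)$, one conditions on $R$ and uses that each leaf independently has chance at most $\lambda^2 R$ to be infected by the root and then return an infection before recovering, to conclude that the root is never reinfected with conditional probability at least $1/2$. But this lower bound is the easy, uninteresting direction. The actual content of the lemma, and the entire point of the appendix, is the \emph{upper} bound $\mathbb{P}(T\le e^{\eps\lambda^2 k})\le c/\lambda^2 k$, i.e.\ survival for an exponentially long time with failure probability $O(1/\lambda^2 k)$. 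The paper proves this by (i) showing via Lemma~\ref{lem:tail_N_R} that $(1+N_R)/\lambda k$ stochastically dominates a $\mathrm{Unif}[0,1/a]$ random variable, where $N_R$ is the number of infected leaves at the root's first recovery, (ii) inserting that bound directly into the expected failure probability $\mathbb{E}[3e^{-\lambda N_R/128}]$ coming from Lemma~\ref{lem:metastable_step} to get the refined $O(1/\lambda^2 k)$ bound without the logarithmic loss of \cite[Lemma~3.4]{nguyen2022subcritical}, and (iii) iterating Lemma~\ref{lem:metastable_step} to sustain $\Theta(\lambda k)$ infected leaves over exponentially many unit intervals. None of your argument engages with this; a birth--death chain that has a $\Theta(1/\lambda^2 k)$ chance of dying before escaping also has a $1-\Theta(1/\lambda^2 k)$ chance of escaping, and turning that escape into exponential-time metastability is precisely the part the lemma requires.
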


\begin{remark}
	This result states that the probability that the contact process fails to survive for an exponentially long time (at the center of the star) is bounded below by $c/\lambda^2 k$. This is to be compared with \cite[Lemma~3.4]{nguyen2022subcritical}, with mainly two differences:
	\begin{enumerate}
		\item Their bound is $c\log(\lambda^2k)/\lambda^2 k$ rather than $c/\lambda^2 k$. We obtain this improvement by a slight refinement of one argument in their proof. Note that the failure probability really is of order $1/\lambda^2 k$ (when $\lambda^2 k$ large), as it is easily seen that the probability that the center will never be reinfected by one of its neighbours is already at least
		\[\frac 1 2 \left(1-e^{-\frac 1 {2\lambda^2 k}}\right).\]
		To prove this, just argue that conditionally on the first recovery time $R$ of the root, each leaf has independently probability $(1-e^{-\lambda R})\left(\frac \lambda {1+\lambda}\right)\le  \lambda^2 R$ to be infected by the root before time $R$ and then send an infection back before its own recovery. Thus, conditionally on $R \leq {1}/{2\lambda^2 k}$, the probability that the center is reinfected by neighbour is at most $\nicefrac{1}{2}$. We conclude as $R$ is a standard exponential random variable.
		\item They consider the extinction time on $S_k$ rather than here $T$ the total time the center is infected. This is a very minor difference, as their proof clearly implies that the center if infected at least half of the time interval where they ensure survival of the process, so they could as well have stated the result for $T$. In Section 5.1 however, we require the result with $T$ rather than the extinction time on the star.
	\end{enumerate} 
\end{remark}

Consider $(N_t)$ the number of leaves infected at time $t$, and $R$ the first recovery time of the root. We start with the following observation:
\begin{lemma}\label{lem:tail_N_R}
		There exists $a\ge 1$ such that for all $\lambda, k$, we have
		\[\P(N_R < \lfloor \lambda k x\rfloor)\le ax \qquad \forall x\le \nicefrac{1}{a}.\]
\end{lemma}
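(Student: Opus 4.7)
The plan is to condition on $R$ and exploit that, conditionally on $R=r$, each leaf evolves independently as a two-state chain with rates $\lambda$ (up) and $1$ (down) starting at $0$, so that
\[
N_R \mid R = r \ \sim\ \operatorname{Bin}\bigl(k, p(r)\bigr),
\qquad p(r):=\frac{\lambda}{1+\lambda}\bigl(1-e^{-(1+\lambda)r}\bigr),
\]
and therefore $\P(N_R<m) = \int_0^\infty e^{-r}\,\P(\operatorname{Bin}(k,p(r))<m)\,\de r$ with $m:=\lfloor \lambda k x\rfloor$. When $\lambda k x<1$ the floor is $0$ and the bound is trivial, so I may assume $m\ge 1$, which together with $x\le 1/a$ forces $\lambda k\ge a$.

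I would then split the integral at the threshold $r^\ast := 4m/(\lambda k)\le 4x$. The contribution from $R\le r^\ast$ is at most $\P(R\le r^\ast) \le r^\ast \le 4x$. For $r\ge r^\ast$, using the elementary bound $1-e^{-y}\ge y/2$ valid for $y\le 1$ and choosing $a\ge 8$ so that $(1+\lambda)r^\ast\le 8/a\le 1$, a short calculation gives $kp(r^\ast)\ge 2m$; monotonicity of $p$ extends this to $kp(r)\ge 2m$ on $[r^\ast,\infty)$. A standard lower-tail Chernoff bound for the binomial then yields
\[
\P\bigl(\operatorname{Bin}(k,p(r))<m\bigr)\ \le\ e^{-kp(r)/8}\qquad \text{for }r\ge r^\ast.
\]

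The remaining task is to control $\int_{r^\ast}^\infty e^{-r-kp(r)/8}\,\de r$. I would split at $1/(1+\lambda)$: on $[r^\ast, 1/(1+\lambda)]$ the linear lower bound $kp(r)\ge \lambda k r/2$ turns this into a geometric integral bounded by $\int_0^\infty e^{-r(1+\lambda k/16)}\,\de r\le 16/(\lambda k)$; on $[1/(1+\lambda),\infty)$ one has $kp(r)\ge \lambda k(1-1/e)/(1+\lambda)$, producing an exponentially small factor $e^{-c\lambda k}$ which is easily absorbed into $O(1/(\lambda k))$. Since $m\ge 1$ forces $x\ge 1/(\lambda k)$, this is at most $Cx$ for a universal constant $C$, and altogether $\P(N_R<m)\le (4+C)\,x =: ax$.

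The key point, and the source of the improvement over the $\log(\lambda^2 k)$ factor appearing in \cite{nguyen2022subcritical}, is the balanced choice of $r^\ast$: for any smaller threshold the tail integral blows up, while for any larger threshold the direct contribution $\P(R\le r^\ast)$ already exceeds $ax$. Taking $r^\ast=\Theta(m/(\lambda k))$ so that $kp(r^\ast)=\Theta(m)$ makes both pieces of order $m/(\lambda k)\asymp x$, yielding the sharp linear rate without a spurious logarithm; the main technical care needed is verifying the two regimes of $r$ (linear growth of $kp(r)$ for small $r$, constant lower bound for large $r$) match up with universal constants independent of $\lambda<1$ and $k$.
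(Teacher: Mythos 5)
Your proposal is correct, and it takes a genuinely different route from the paper. The paper never writes down the conditional law of $N_R$ explicitly; instead it introduces the auxiliary count $M$ of leaves \emph{ever} infected on $[0,R]$ and splits into three cases ($R\ge 1$, or $R<1$ with $M\ge k/2$, or $R<1$ with $M<k/2$), handling the first two by Chernoff bounds over a carefully chosen time window and the crucial third case by a race argument: while the root is infected and fewer than $k/2$ leaves have been reached, fresh leaves that survive a unit of time arrive at rate at least $\lambda k/6$, so the root's Exp$(1)$ recovery clock loses against $i$ such arrivals with probability at most $6i/\lambda k$. Your approach instead uses the clean structural observation that, conditionally on $R=r$, the leaves evolve as i.i.d.\ two-state chains (the root is deterministically infected throughout $[0,r)$ and $R$ is an independent Exp$(1)$), so $N_R\mid R=r\sim\operatorname{Bin}(k,p(r))$ exactly; you then split the integral over $R$ at $r^\ast\asymp m/\lambda k$, pay $\P(R\le r^\ast)\le r^\ast\lesssim x$ on the small-$R$ piece, and apply a lower-tail Chernoff bound after verifying $kp(r)\ge 2m$ on the large-$R$ piece. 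Your version is more systematic and avoids the intermediate quantity $M$ entirely; the paper's version avoids having to know the exact conditional distribution of $N_R$, which makes it more robust to generalisation (e.g.\ to other initial conditions), but at the cost of a bulkier case analysis. Both proofs identify the same balanced threshold $r^\ast=\Theta(m/\lambda k)$ as the source of the sharp $1/\lambda^2k$ rate, which is the real content of the improvement over \cite{nguyen2022subcritical}, and your closing remark about the two-sided optimality of this threshold is a nice way of making that explicit.
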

\begin{proof}
	We have to prove that for any positive integer $i$ smaller than $\lambda k x_0$, we have \[\P(N_R<i)\le \frac {a i} {1+\lambda k}.\] Call $M$ the total number of leaves that receive an infection on the time interval $[0,R]$, but are not necessarily infected anymore at time $R$, so $M\ge N_R$. We first treat separately the cases $R\ge 1$ and $\{R<1, M\ge \nicefrac{k}{2}\}$, on which $N_R$ is very unlikely to be small.
	
	On the event $\{R\ge 1\},$ we simply look at infections of the root on the time interval $I=[R-1,R]$ and argue that each leaf will independently receive an infection on $I$ and experience no recovery on $I$ with probability $(1-e^{-\lambda})e^{-1}\ge \nicefrac{\lambda}{3}$. We conclude with a standard Chernoff bound for Bernoulli random variables
	\[
	\P(R\ge 1, N_R< i)
	\le \P\left(\operatorname{Bin}\left(k,\nicefrac{\lambda}{3}\right)	\le \frac {\lambda k}4\right)
	\le \exp\left(-\frac {\lambda k}{32}\right)
	\le \frac a {1+\lambda k} 
	\le \frac {ai} {1+\lambda k}
	\]
	for $1\le i\le \lambda k/4$ and $a$ large.
	
	On the event $\{R<1, M\ge \nicefrac{k}{2}\}$, each of the $M$ infected leaves has probability $e^{-1}$ to experience no recovery on the whole time interval $[0,1]$ and thus be infected at time $R$. Thus we can lower bound $N_R$ by a $\operatorname{Bin}(\nicefrac{k}{2},e^{-1})$ and conclude similarly.
	
	Finally, while the center has not recovered and at least $\nicefrac{k}{2}$ have still not been infected, we infect a new leaf that then stays infected during one unit of time with rate at least $(\lambda k/2) e^{-1}\ge \lambda k/6$. Hence, the probability of the center recovering before $i$ such infection events is bounded by 
	\[i \cdot \frac 1 {1+\lambda k/6}\le \frac {6i}{\lambda k}.\]
	We deduce
	\[\P(N_R\le i, R<1,M<k/2)\le \frac {6i}{\lambda k}\]
\end{proof}

Lemma~\ref{lem:tail_N_R} is only a small improvement over \cite[Lemma~3.1]{nguyen2022subcritical}, where they have a second term in the upper bound that is decreasing in $x$ and becomes relevant only when $x$ is of order $\log(\lambda k)/\lambda k$ or smaller. We continue with essentially a rephrasing of \cite[Lemmas~3.2 and 3.3]{nguyen2022subcritical}:
\begin{lemma}  \cite[Lemmas~3.2 and 3.3]{nguyen2022subcritical}
	\label{lem:metastable_step}
	Starting from a configuration with $M$ infected neighbours at time $t$, the central vertex will be infected for at least half of the time-interval $[t,t+1]$ and there will be at least $\lambda k/100$ infected neighbours at time $t+1$ with probability at least 
	\[
	1-3 \exp\left(-\frac{\lambda M}{128}\right).
	\]
\end{lemma}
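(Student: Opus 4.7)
\emph{Plan.} Since the claimed failure probability is $3\exp(-\lambda M/128)$, the plan is to identify three bad events, each of probability at most $\exp(-\lambda M/128)$, whose complement implies both conclusions. Write $N_s$ for the number of infected leaves at time $s$. The first bad event is that fewer than $M/(4e)$ of the original $M$ infected leaves stay infected throughout $[t,t+1]$ (``persistent'' leaves, with no recovery on the whole interval). Each original leaf is persistent with independent probability $e^{-1}$, so a Chernoff bound on $\operatorname{Bin}(M,e^{-1})$ yields failure probability $\leq\exp(-cM)\leq\exp(-\lambda M/128)$ since $\lambda<1$. On the complement of this event, $N_s\geq M/(4e)$ holds throughout $[t,t+1]$.

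The second bad event is that the centre is infected for less than half of the time, i.e.\ $T_c:=\int_t^{t+1}\xi_s(\rho)\,ds<1/2$. Conditionally on the first event, the centre is reinfected at rate $\geq \lambda M/(4e)$ whenever healthy, so one couples the centre's state from below with an alternating renewal process (infected holding times $\mathrm{Exp}(1)$, healthy holding times $\mathrm{Exp}(\lambda M/(4e))$). The number $J$ of recoveries in $[t,t+1]$ is dominated by a $\mathrm{Poisson}(1)$ random variable, while the total healthy time $1-T_c$ is dominated by a sum of $J$ independent $\mathrm{Exp}(\lambda M/(4e))$ variables. An exponential moment computation (optimising the Chernoff parameter $\theta<\lambda M/(4e)$) then bounds $\P(T_c<1/2)\leq \exp(-\lambda M/128)$ whenever $\lambda M$ is large enough to dominate the prefactor.

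The third bad event is $N_{t+1}<\lambda k/100$. If $M\geq \lambda k/9$ the first event already supplies $M/(4e)\geq \lambda k/100$ persistent leaves at time $t+1$, so assume $M<\lambda k/9$ and thus at least $k/2$ leaves are initially healthy. Conditionally on the centre's trajectory, these leaves evolve as independent two-state chains, and for any such leaf $\ell$ the linear ODE for its conditional infection probability gives
\[
\P(\ell\in\xi_{t+1}\mid \xi_\cdot(\rho))\geq \lambda e^{-\lambda}\int_t^{t+1}\xi_s(\rho)\,e^{-(t+1-s)}\,ds\geq c\lambda,
\]
since under $T_c\geq 1/2$ the weighted integral is minimised when all centre-infected time is pushed to the start of the interval, still leaving $e^{-1/2}-e^{-1}>0$. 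A conditional Chernoff bound on these independent indicators then yields $N_{t+1}\geq \lambda k/100$ with failure probability at most $\exp(-c'\lambda k)\leq \exp(-\lambda M/128)$, using $\lambda k>9M$ in the current regime. The main obstacle is the second step: the concentration exponent must scale as $\lambda M$ and not merely $M$, which relies on each of the few (expected $O(1)$) healthy excursions having expected length $O(1/(\lambda M))$ rather than $O(1)$; matching the precise constant $128$ is then a routine matter of optimising the Chernoff parameter.
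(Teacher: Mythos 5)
First, a structural note: the paper does not prove this lemma at all—it is imported directly from \cite[Lemmas~3.2 and 3.3]{nguyen2022subcritical} and the text immediately surrounding the statement only says it is ``essentially a rephrasing'' of those lemmas. So there is no proof in this paper to compare your argument against; what follows is an assessment of your reconstruction on its own terms.

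Your three-event decomposition is the right skeleton, and the arithmetic in all three steps checks out. Event~1 (persistence of $\operatorname{Bin}(M,e^{-1})$) is fine, and the Chernoff constant comfortably beats $1/128$ since $\lambda<1$. Event~2 is also fine: the number of recoveries of the centre in a unit interval is dominated by a $\operatorname{Poisson}(1)$ count, each intervening healthy excursion is dominated by an $\operatorname{Exp}(\lambda M/4e)$ once event~1 holds, and the moment-generating-function calculation with $\theta=\lambda M/8e$ gives $e\cdot\exp(-\lambda M/16e)\leq\exp(-\lambda M/128)$ in the nonvacuous regime $\lambda M\gtrsim 128\log 3$. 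In Event~3 the case split at $M\gtrless\lambda k/9$, the first-infection lower bound $\lambda e^{-\lambda}\int\xi_s(\rho)e^{-(t+1-s)}\,\mathrm ds$, and the rearrangement argument giving $e^{-1/2}-e^{-1}$ are all correct, and the final Chernoff bound needs only $M\le O(k)$, which holds trivially.

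The one genuine gap is the sentence ``Conditionally on the centre's trajectory, these leaves evolve as independent two-state chains.'' As written this is false: the centre's trajectory $\xi_\cdot(\rho)$ is a function of \emph{all} the Poisson marks in the graphical construction, in particular of the leaf-to-centre transmission marks of the initially-healthy leaves, so conditioning on it couples those leaves together (for instance, a reinfection of the centre at time $s$ increases the conditional probability that at least one initially-healthy leaf was infected at $s$). The standard repair, which you should make explicit, is to delete the leaf-to-centre transmission marks of all initially-healthy leaves. The resulting centre trajectory $\tilde\xi_\cdot(\rho)$ is both (a) stochastically dominated by $\xi_\cdot(\rho)$ pathwise, by monotonicity of the graphical construction, and (b) independent of the recovery marks and centre-to-leaf transmission marks of the initially-healthy leaves. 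One then carries out Events~1 and~2 for $\tilde\xi_\cdot(\rho)$ (this only weakens them), and in Event~3 one lower-bounds each initially-healthy leaf's state at $t+1$ by the two-state chain driven by $\tilde\xi_\cdot(\rho)$ and that leaf's own marks; these lower bounds \emph{are} conditionally independent given $\tilde\xi_\cdot(\rho)$, and the rest of your argument goes through verbatim. With this decoupling inserted, the proof is complete and matches the strategy of the cited reference.
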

The improvement now is that instead of asking $N_R$ not to be too small so as to have a small bound in the probability of $\{N_{R+1}\le \frac {\lambda k}{100} \}$ with Lemma~\ref{lem:metastable_step}, we 
use %
Lemma~\ref{lem:metastable_step} to compute
\[
\P\left(N_{R+1}\le \frac {\lambda k}{100} \right)\le \E\left[\P\left(N_{R+1}\le \frac {\lambda k}{100} \bigg| N_R\right)\right]
\le \E\left[3 \exp\left(-\frac{\lambda N_R}{128}\right)\right],
\]
where by Lemma~\ref{lem:tail_N_R} we have that $(1+N_R)/\lambda k$ is stochastically bounded below by $U$ a uniform random variable on $[0,\nicefrac{1}{a}]$, and thus
\[
\P\left(N_{R+1}\le \frac {\lambda k}{100}\right)\le 3 e^{\frac \lambda {128}} \E\left[e^{-\frac {\lambda^2 k}{128} U} \right]\le \frac {384 a e^{\frac \lambda {128}}} {\lambda^2 k},
\]
ensuring $N_{R+1}>\lambda k/100$ with failure probability bounded by $c/\lambda^2 k$.
We now conclude the proof as in \cite{nguyen2022subcritical}. Once the number of infected leaves is above the threshold $\lambda k/100$, we use repeateadly Lemma~\ref{lem:metastable_step} to maintain the same amount of infected leaves for an exponentially long time and with an exponentially small failure probability.

\end{appendices}

\end{document}